\RequirePackage{fix-cm}

\documentclass[11pt,leqno,a4paper,oneside]{amsart}
\usepackage[T1,T2A]{fontenc}

\usepackage[english]{babel}
\usepackage{indentfirst}


\usepackage{amssymb,amsthm,enumerate}
\usepackage{wasysym}
\usepackage{array}
\usepackage{enumitem}
\usepackage[pdftex,bookmarks=true]{hyperref}                        
\usepackage[capitalise,nameinlink]{cleveref}
\usepackage{textcomp}
\usepackage{array,blkarray}
\usepackage{multirow,bigdelim}
\usepackage{enumitem}
\usepackage[normalem]{ulem}
\usepackage[all]{xy}

\usepackage[pdftex]{graphicx}
\usepackage[pdftex,table]{xcolor}
\usepackage{makecell}

\usepackage{boldline, multirow}

\usepackage{tikz}                                
\usetikzlibrary{angles}
\usetikzlibrary{decorations.markings,calc}
\usetikzlibrary{cd}
\usepackage{longtable}
\usepackage{mathrsfs}
\usepackage{hhline}
\usepackage{multicol}
\usepackage{multirow}
\usepackage{blkarray} 
\usepackage{lscape}
\usepackage{rotating}
\usepackage[font=sc]{caption}
\usepackage{adjustbox}
\usepackage{comment}
\usepackage{booktabs}
\usepackage{tabularx}
\newcolumntype{Y}{>{\centering\arraybackslash}X}
\usepackage{nicematrix}
\usepackage{mathtools}
\usepackage{longtable}

\usepackage{float}

\usepackage{verbatim}

\usepackage{scalerel,stackengine}
\stackMath
\newcommand\reallywidehat[1]{%
\savestack{\tmpbox}{\stretchto{%
  \scaleto{%
    \scalerel*[\widthof{\ensuremath{#1}}]{\kern-.6pt\bigwedge\kern-.6pt}%
    {\rule[-\textheight/2]{1ex}{\textheight}}
  }{\textheight}%
}{0.5ex}}%
\stackon[1pt]{#1}{\tmpbox}%
}

\usepackage{doi} 

\input xy
\xyoption{all}

\entrymodifiers={!!<0pt,0.7ex>+}


\overfullrule 5pt
\textwidth160mm
\oddsidemargin5mm
\evensidemargin5mm


\hypersetup{pdffitwindow=true,
colorlinks=true,
citecolor=black,
filecolor=black,
linkcolor=black,
urlcolor=black,
hypertexnames=true}



\newcommand{\IZ}{{\mathbb{Z}}}



\newcommand{\cO}{{\mathcal{O}}}

\newcommand{\propernormalsubgroup}{%
\mathrel{\ooalign{$\lneq$\cr\raise.22ex\hbox{$\lhd$}\cr}}}


\DeclareMathOperator{\Hom}{Hom}             
                   %
\DeclareMathOperator{\Syl}{Syl}                  
\DeclareMathOperator{\Res}{Res}               
\DeclareMathOperator{\Ind}{Ind}                  
\DeclareMathOperator{\Inf}{Inf}                    
\DeclareMathOperator{\tr}{tr}			

\DeclareMathOperator{\Irr}{Irr}
\DeclareMathOperator{\IBr}{IBr}
\DeclareMathOperator{\Lin}{Lin}

\DeclareMathOperator{\TS}{TS}

\DeclareMathOperator{\Soc}{Soc}
\DeclareMathOperator{\Rad}{Rad}

\DeclareMathOperator{\Sc}{Sc}

\DeclareMathOperator{\Triv}{Triv}

\newcommand{\TrivialGroup}{\{1\}}


\renewcommand{\C}{{\mathbb C}}

\newcommand{\Hd}{\operatorname{Hd}}

\newcommand{\GAP}{\textsf{GAP}}

\newcommand{\SymmetricGroup}{\mathfrak{S}}

\newcommand{\AlternatingGroup}{\mathfrak{A}}

\newcommand{\xddots}{%
	\raise 14pt \hbox {.}
	\mkern 6mu
	\raise 4pt \hbox {.}
	\mkern 6mu
	\raise -6pt \hbox {.}
}

\newcommand{\xvdots}{%
	\raise 14pt \hbox {.}
	\mkern -5.6mu
	\raise 4pt \hbox {.}
	\mkern -5.1mu
	\raise -6pt \hbox {.}
}

\newcommand{\xhdots}{%
	\raise 5.5pt \hbox {.}
	\mkern 6mu
	\raise 5.5pt \hbox {.}
	\mkern 6mu
	\raise 5.5pt \hbox {.}
}

\newcommand{\Bl}{\textup{Bl}}




\let\lra=\longrightarrow





\newtheoremstyle{thmnew}{3ex}{3ex}{\itshape}{}{\bf}{.}{.5em}{}
\theoremstyle{thmnew}
\newtheorem{thm}{Theorem}[section]
\newtheorem{lem}[thm]{Lemma}

\newtheorem{cor}[thm]{Corollary}
\newtheorem{prop}[thm]{Proposition}

\newtheoremstyle{defnew}{3ex}{3ex}{}{}{\bf}{.}{.5em}{}
\theoremstyle{defnew}

\newtheorem{defn}[thm]{Definition}

\newtheorem{conv}[thm]{Convention}

\newtheorem{nota}[thm]{Notation}
\newtheorem{properties}[thm]{Properties}

\theoremstyle{remark}
\newtheorem{rem}[thm]{Remark}

\raggedbottom

\makeatletter
\renewcommand*\env@matrix[1][\
arraystretch]{%
	\edef\arraystretch{#1}%
	\hskip -\arraycolsep
	\let\@ifnextchar\new@ifnextchar
	\array{*\c@MaxMatrixCols c}}
\makeatother

\begin{document}
\hyphenrules{english}

	
        \title[Trivial source characters in blocks of domestic representation type]{Trivial source characters in blocks of domestic representation type}

	
        \author{{Bernhard B\"ohmler}}
        \address{{\sc Bernhard B\"ohmler},  Leibniz Universit\"at Hannover, Institut f\"ur Algebra, Zahlentheorie und Diskrete Mathematik, Welfengarten 1, 30167 Hannover, Germany}
        \email{boehmler@math.uni-hannover.de}

        \subjclass[2020]{Primary 20C15, 20C20. Secondary 19A22, 20C05}
        \keywords{$p$-permutation modules, trivial source modules, trivial source character tables, species tables, ordinary character theory, representation type, decomposition matrices, simple modules, projective indecomposable modules.}
        \date{\today}

        \begin{abstract}
        Let $G$ be a finite group of even order, let $k$ be an algebraically closed field of characteristic~$2$, and let $B$ be a block of the group algebra $kG$ which is of domestic representation type. Up to splendid Morita equivalence, precisely three cases can occur:
        $kV_4$, $k\AlternatingGroup_4$ and the principal block of $k\AlternatingGroup_5$. In each case, given the character values of the ordinary irreducible characters of $B$, we determine the ordinary characters of all trivial source~$B$-modules.
	\end{abstract}

\noindent\thanks{This work is part of a doctoral thesis prepared at the RPTU Kaiserslautern-Landau under the supervision of Caroline Lassueur.}
	
	\maketitle

	
\pagestyle{myheadings}
\markboth{B. B\"ohmler}{Trivial source characters in blocks of domestic representation type}

\section{Introduction}
Trivial source modules play a prominent r{\^o}le in modular representation theory of finite groups. They are, by definition, the indecomposable direct summands of the permutation modules and occur as elementary building blocks for the construction of various categorical equivalences between block algebras, such as splendid Rickard equivalences, $p$-permutation equivalences, source-algebra equivalences, or Morita equivalences with endo-permutation source.\par
Any trivial source module can be lifted to characteristic zero and affords a well-defined ordinary character, which contains important pieces of information about its structure. A determination of these ordinary characters is therefore essential.\par
A complete classification of trivial source modules and their characters is not easy to obtain in general. For this reason, it is natural to start with modules belonging to blocks with defect groups of small order. In finite representation type, which occurs precisely when the defect groups are cyclic, such a classification was accomplished  by Hi{\ss}-Lassueur and Koshitani-Lassueur in \cite{HissLassueurCyclicDefect, CharactersTS_Cyclic_CL_SK}. The next aim is to understand blocks of tame representation type. The easiest subclass amongst these algebras is the class of blocks of domestic representation type, which happens if and only if the defect groups of the block are isomorphic to a Klein four-group. In the present article, the ordinary characters of all trivial source modules belonging to blocks with Klein-four defect groups are determined.\par
\indent This work is part of a program aiming at gathering information about trivial source modules of small finite groups and their associated trivial source character tables in a database, see~\cite{DatabaseTSCTs}. We refer the reader to \cite[Appendix]{book:oldbenson}, \cite[§4.10]{LuxPah}, \cite{BBCLNF}, \cite{FarLas23}, \cite{Thesis_Jorge_Rene_Ledesma_Granados}, and \cite{FrobeniusPaper_BB_CL} for previously computed trivial source modules and trivial source character tables. 
\indent Since trivial source modules are preserved by splendid Morita equivalences, it is useful to consider blocks of domestic representation type up to splendid Morita equivalence. If $B$ is a block of domestic representation type, then by the main result of \cite{KleinFourDefectGroupsCravenEatonLinckelmannK} (see \cref{KleinFourDefectGroupsCravenEatonLinckelmannK}) precisely one of the following cases must occur: $B$ is splendidly Morita equivalent to $kV_4$ or to~$k\AlternatingGroup_4$ or to the principal block of $k\AlternatingGroup_5$.\\
\indent The main result of the present article is the determination of the ordinary characters of the trivial source $B$-modules, in the three cases above, given the character values of the ordinary irreducible
characters of $B$:
\begin{itemize}
    \item[1)] the first case is dealt with in \cref{TS_characters_Blocks_Puig_equivalent_to_kV4};
    \item[2)] the second case is established in \cref{Proposition_ts_Puig_A4};
    \item[3)] the third case is settled in \cref{Proposition_ts_Puig_A5}.
\end{itemize}

We remark that the article~\cite{KleinFourDefectGroupsCravenEatonLinckelmannK} implies a bijection between the ordinary irreducible characters of $B$ and those of~$kV_4$ resp. $k\AlternatingGroup_4$ resp. the principal block of $k\AlternatingGroup_5$. However, this information does not yet determine trivial source characters of $B$ if we are only given the character values of the ordinary irreducible characters of $B$ (e.g. via the ordinary character table of a finite group). The present article accomplishes this aim.\\
\indent In order to obtain these results, it is necessary to treat the cases $kV_4$, $k\AlternatingGroup_4$, and $k\AlternatingGroup_5$ first.\\
\indent As a consequence of our three main theorems, in Section $5$, we can describe all trivial source character tables of the infinite family of dihedral groups of type $D_{4v}$, where $v$ is an odd integer, completely.\\
\indent To a large extent, this is achieved by evaluating ordinary characters of these groups, which aligns with a similar approach adopted in \cite{KoshitaniKunugiCharValues}. See also \cref{CharDegreesDeterminePuigClass}. 
Notice that calculating the trivial source character table of a finite group $G$ at a prime $p$ requires more than knowing the splendid Morita equivalence class for each block of $G$, as it provides insights into all possible Brauer constructions, necessitating the consideration of many subgroup characters.\par
\indent The article is structured as follows: in Section $2$, we fix the general notation and review existing
literature on representation theory and properties of trivial source modules and trivial source character tables. In Section $3$, we compute the trivial source modules and the trivial source character tables of 
$kV_4$, 
$k\AlternatingGroup_4$, and $k\AlternatingGroup_5$, respectively, in charateristic two.
In Section $4$, we prove that the character values of $\Irr_K(B)$ uniquely determine the ordinary characters of the trivial source $B$-modules for the aforementioned cases. 
Section $5$ presents two examples applying these results: the first is the infinite family of dihedral groups $D_{4v}$~($v\in\mathbb{Z}_{\geq 3}$ odd), and the second is a computational example of a concrete finite group $G$, where we derive the entire trivial source character table using theoretical arguments and character values of $\Irr_K(G)$, providing extensive details for clarity.

\section{Preliminaries}
\label{sec:notanddefs}

\vspace{2mm}
\subsection{General notation}
Throughout, unless otherwise stated, we adopt the notation and conventions below. We let~$p$ denote a positive prime number and  $G$ denote a finite group of order divisible by~$p$. We let~$(K,\mathcal{O},k)$ be a $p$-modular system, where $\mathcal{O}$ denotes a complete discrete valuation ring of characteristic zero with unique maximal ideal $\mathfrak{p}:=J(\mathcal{O})$, algebraically closed residue field $k=\mathcal{O}/\mathfrak{p}$ of characteristic $p$, and field of fractions $K=\textup{Frac}(\mathcal{O})$, which we assume to be large enough for $G$ and its subgroups in the sense that $K$ contains a root of unity of order~$\exp(G)$, the exponent of $G$. For~$R\in\{\cO,k\}$, $RG$-modules are assumed to be finitely generated left $RG$-lattices, that is, free as~$R$-modules, and  we let  $R$  denote the trivial $RG$-lattice. Given an $\mathcal{O}G$-lattice $M$, we denote the $p$-modular reduction of $M$ by $\overline{M}$.
    \par
    We let $\textup{Syl}_p(G)$ denote the set of all Sylow $p$-subgroups of $G$,  $ccls(G)$ denote a set of representatives for the conjugacy classes of $G$,  $[G]_{p^\prime}$ denote a set of representatives for the $p$-regular conjugacy classes of $G$, and we let $G_{p'}:=\{g\in G\mid p\nmid o(g)\}$. If two elements $a$ and $b$ are conjugate in $G$, we write $a\sim_G b$; otherwise, we write $a\not\sim_G b$. If the group is clear from the context, we omit the symbol $G$ in this notation. The centre of the group $G$ is denoted by $Z(G)$. Moreover, we let $\mathscr{S}_p(G)$ be a set of representatives of the set of conjugacy classes of $p$-subgroups of $G$.
    \par Unless otherwise stated, by a $p$-block we mean a block of the group algebra $kG$. We write~$B_0(G)$ for the principal $p$-block of $G$ and $\textup{Bl}_p(G)$ for the set of $p$-blocks of $G$.

We denote by $\Irr(G)$ (resp. $\Irr(B)$) the set of irreducible
$K$-characters of $G$ (resp. of the~$p$-block~$B$). Since the blocks of $\mathcal{O}G$ are in bijection with the blocks of $kG$ via reduction modulo~$J(\mathcal{O})$, by abuse of notation and terminology, given a $p$-block $B$ of $kG$, we write $\Irr(B)$ for the set of irreducible $K$-characters of the corresponding block of $\mathcal{O}G$ and talk about the ordinary irreducible characters of $B$. Moreover, we let $\Lin(G)$ (resp. $\Lin(B)$) and $\IBr_p(G)$ (resp. $\IBr_p(B)$) denote the set of linear characters of $G$ (resp. of the $p$-block $B$), and the set of all irreducible~$p$-Brauer characters of $G$ (resp. of the $p$-block $B$), respectively. Further, we write~$X(G)$ for the ordinary character table of the group $G$.
     \par
    Given a subgroup $H\leq G$ of $G$, an ordinary character $\psi$ of $H$ and $\chi$ an ordinary character of~$G$, we write 
    $\Ind_H^G(\psi)$ for the induction of $\psi$ from $H$ to $G$, 
    $\Res^G_H(\chi)$ for the restriction of $\chi$ from $G$ to $H$, $\chi^\circ:=\chi|_{G_{p'}}$ for the reduction modulo $p$ of $\chi$, $1_G$ for the trivial character of $G$, and $1_{G_{p^\prime}}$ for the trivial $p$-Brauer character of $G$. Given $N\unlhd G$ and an ordinary character~$\nu$ of~$G/N$, we write $\Inf_{G/N}^G(\nu)$ for the inflation of $\chi$ from $G/N$ to $G$. Similarly, we write~$\Ind_H^G(L)$ for the induction of the~$kH$-module $L$ from $H$ to $G$, $\Res^G_H(M)$ for the restriction of the $kG$-module $M$ from~$G$ to~$H$, and~$\Inf_{G/N}^G(U)$ for the inflation of the $k[G/N]$-module $U$ from $G/N$ to $G$. Moreover, if~$M$ is a~$kG$-module, then we denote by $\varphi_{M}$ the Brauer character afforded by $M$,  and if $Q\leq G$ then the Brauer quotient~(or Brauer construction) of $M$ at $Q$ is the $k$-vector space $M[Q]:=M^{Q}\big/ \sum_{R<Q}\tr_{R}^{Q}(M^{R})$, where~$M^{Q}$ denotes the fixed points of $M$ under~$Q$ and $\tr_{R}^{Q}$ denotes  the relative trace map. This vector space has a natural structure of a $kN_{G}(Q)$-module, but also of a $kN_{G}(Q)/Q$-module, and is equal to zero if $Q$ is not a $p$-subgroup. Moreover, we use the abbreviation PIM to mean a projective indecomposable module. We recall that for any $\chi\in\Irr(G)$ there exist uniquely determined non-negative integers  $d_{\chi\varphi}$ such that $\chi^\circ=\sum_{\varphi\in\IBr_p(G)}d_{\chi\varphi}\varphi$\,.
    Then, for any ${\varphi\in\IBr_p(G)}$,  the projective indecomposable character associated to $\varphi$ is 
     \begin{equation*}
\Phi_\varphi:=\sum_{\chi\in\Irr(G)}d_{\chi\varphi}\chi\,.
     \end{equation*}
    The $p$-decomposition matrix of $G$ is then
     \[
     \mathfrak{D}(kG)  :=\Big(d_{\chi,\varphi}\Big)_{\substack{\chi\in\Irr(G)\\\varphi\in\IBr_p(G)}}\in K^{|\Irr(G)|\times|\IBr_p(G)|}\,.
     \]
The $p$-decomposition matrix $\mathfrak{D}(B)$ of a $p$-block $B$ of $G$ is defined analogously. Given a $kG$-module $M$, we denote the dual module of $M$ by $M^{*}:=\Hom_k(M,k)$, the socle of $M$ by $\textup{Soc}(M)$, and the radical of $M$ by $\textup{Rad}(M)$. We write $[M]=\sum_{1\leq i\leq s}^{} {a_iM_i}$ if $M$ has precisely $\sum_{1\leq i\leq s}^{} {a_i}$ composition factors where, for each $1\leq i\leq s$, the simple composition factor $M_i$ occurs exactly~$a_i$ times. Given an algebra $A$, we denote the regular $A$-module by $A^\textup{reg}$.

     For any positive integer $n>1$, we denote the symmetric (respectively alternating) group acting on $n$ letters by $\SymmetricGroup_n$ (respectively $\AlternatingGroup_n$) and the dihedral group of order $2n$ by $D_{2n}$. Moreover, we denote the Klein four-group by $V_4$.
\par\indent We assume that the reader is familiar with elementary notions of ordinary and modular representation theory of finite groups. We refer to \cite{Linckelmann1, Linckelmann2, Webb, NagaoTsushima, CurtisReinerMethods1, ThevenazBook} for further standard notation and background results. However, we collect frequently used results from the literature in Subsection \ref{subsec:RepTypeOfAnAlgebra} and Subsection \ref{subsec:GeneralBackground}.

\subsection{The representation type of an algebra}\label{subsec:RepTypeOfAnAlgebra}

\indent Throughout this subsection, we denote by $k$ an algebraically closed field of characteristic $p>0$ for a (not necessarily even) prime number $p$. Recall that every finite dimensional $k$-algebra is of finite, tame or wild representation type, and these types are mutually exclusive (see, e.g., {\cite[Theorem 4.4.2]{Benson1}}). A subclass of all tame algebras  is given by the class of all domestic algebras defined as follows.

\begin{defn}
A $k$-algebra $A$ is called \textbf{domestic} if it is of finite growth.
\end{defn}
We refer the reader to \cite[Chapter XIX]{SimsonSkowronskiVol3} for more information on the notion of growth of an algebra, for equivalent definitions of a domestic $k$-algebra, and for some relations between algebras of tame representation type and algebras of domestic representation type. We use the term \emph{domestic} exclusively for representation-infinite domestic algebras. It follows from {\cite[The second Corollary]{KrauseStableEquivOfMoritaPresRepType}} that domestic representation type is preserved by Morita equivalences. If the algebra under consideration is a representation-infinite block $B$ of a group algebra, we can say even more (see, e.g., {\cite[Theorem 4.5]{FarnsteinerDomestic}}): a defect group of $B$ is isomorphic to $V_4$ if and only if $B$ has domestic representation type if and only if $B$ is Morita equivalent to~$kV_4$ or to $k\AlternatingGroup_4$ or to the principal block
of $k\AlternatingGroup_5$.

\subsection{Background from general representation theory}\label{subsec:GeneralBackground}

We collect here some well-known results from the modular representation theory of finite groups.
\begin{lem}[{\cite[page 40]{Alperin}}]\label{lem:Duals_and_Socles} 
Let $M$ be a $kG$-module. Then $\Soc(M^{*})\cong {(M/\Rad(M))}^{*}$ as~$kG$-modules.
\end{lem}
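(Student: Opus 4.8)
The claim is that for a $kG$-module $M$, we have $\Soc(M^{*}) \cong (M/\Rad(M))^{*}$. The plan is to exploit the exactness and self-duality of the functor $(-)^{*} = \Hom_k(-,k)$, which sends a $kG$-module to its $k$-linear dual with the contragredient action. Since $k$ is a field, this functor is exact and contravariant, and it interchanges submodules and quotients: a submodule inclusion $U \hookrightarrow M$ dualizes to a surjection $M^{*} \twoheadrightarrow U^{*}$, and a quotient map $M \twoheadrightarrow M/U$ dualizes to an inclusion $(M/U)^{*} \hookrightarrow M^{*}$.

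\emph{Step 1.} First I would recall the two characterizations of socle and radical. The head $M/\Rad(M)$ is the largest semisimple quotient of $M$, and $\Soc(N)$ is the largest semisimple submodule of $N$. I would use that $\Rad(M)$ is the intersection of all maximal submodules, equivalently the smallest submodule with semisimple quotient, while $\Soc(N)$ is the sum of all simple submodules.

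\emph{Step 2.} The key observation is that duality converts the defining property of $\Rad$ into the defining property of $\Soc$. Applying $(-)^{*}$ to the short exact sequence
\[
0 \longrightarrow \Rad(M) \longrightarrow M \longrightarrow M/\Rad(M) \longrightarrow 0
\]
yields the short exact sequence
\[
0 \longrightarrow (M/\Rad(M))^{*} \longrightarrow M^{*} \longrightarrow (\Rad(M))^{*} \longrightarrow 0.
\]
This exhibits $(M/\Rad(M))^{*}$ as a submodule of $M^{*}$. Since duality preserves semisimplicity (a simple module dualizes to a simple module, and $(-)^{*}$ is additive on direct sums), and $M/\Rad(M)$ is semisimple, its dual $(M/\Rad(M))^{*}$ is a semisimple submodule of $M^{*}$; hence it is contained in $\Soc(M^{*})$.

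\emph{Step 3.} For the reverse inclusion, I would argue that $(M/\Rad(M))^{*}$ is in fact the \emph{largest} semisimple submodule. Given any semisimple submodule $S \hookrightarrow M^{*}$, dualizing gives a surjection $M^{**} \twoheadrightarrow S^{*}$ onto a semisimple module. Using the natural isomorphism $M^{**} \cong M$ (valid since $M$ is finite-dimensional), this is a semisimple quotient of $M$, so it factors through the maximal semisimple quotient $M/\Rad(M)$; dualizing back shows $S \hookrightarrow (M/\Rad(M))^{*}$. The main point to handle carefully is the compatibility of the double-dual identification with the $kG$-action and with the inclusion maps, i.e.\ checking that the natural evaluation map $M \to M^{**}$ is a $kG$-isomorphism and that it intertwines the two exact sequences. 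Once the inclusions in both directions are established, the isomorphism $\Soc(M^{*}) \cong (M/\Rad(M))^{*}$ follows by the maximality of the socle. The only genuine obstacle is the bookkeeping in Step 3 to confirm that the largest-semisimple-submodule property transfers correctly under the contravariant duality, but this is routine once the exactness and double-duality facts are in place.
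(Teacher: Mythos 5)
Your proposal is correct: the paper itself gives no proof of this lemma, only a citation to Alperin, and your argument (dualizing the exact sequence $0 \to \Rad(M) \to M \to M/\Rad(M) \to 0$ to embed $(M/\Rad(M))^{*}$ into $\Soc(M^{*})$, then using $M \cong M^{**}$ and the maximality of the semisimple quotient for the reverse inclusion) is exactly the standard argument behind the cited result. The finite-dimensionality needed for the double-dual identification is guaranteed by the paper's standing convention that all modules are finitely generated, so there is no gap.
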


\begin{lem}[{\cite[III.9 Lemma 5]{Alperin}}]\label{LemmaAllModulesAreInducedFromSylow}
	Let $M$ be an indecomposable $kG$-module. Then, there exists a Sylow $p$-subgroup~$P$ of $G$ and an indecomposable $kP$-module $L$ such that $M$ is isomorphic to a direct summand of~$\Ind_{P}^{G}(L)$.
\end{lem}

\begin{lem}[{\cite[Theorem 3]{Robinson}}]\label{Lemma_Robinson}
	Let $G$ be a finite group. Let $\widetilde{H}\leq G$, and let $S$ and~$\tilde{S}$ respectively be a simple $kG$-module and a simple $k\widetilde{H}$-module. Then, the multiplicity of the projective cover $P(S)$ of $S$ as a direct summand of $\Ind_{\widetilde{H}}^{G}({\tilde{S}})$ is equal to the multiplicity of the projective cover $P(\tilde{S})$ of $\tilde{S}$ as a direct summand of $\Res_{\widetilde{H}}^{G}({S})$.
\end{lem}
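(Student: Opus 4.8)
The plan is to reduce the statement to a rank computation for a bilinear form and then to transport it across the inclusion $\widetilde{H}\leq G$ by Frobenius reciprocity. The starting point is that $kG$ and $k\widetilde{H}$ are symmetric algebras, so that every projective indecomposable module is also injective and, for a simple module $S$, the module $P(S)$ has both head and socle isomorphic to $S$. Using this, I would first establish the following auxiliary fact: for a symmetric $k$-algebra $A$ with $k=\overline{k}$, an indecomposable projective $A$-module $P$ and an arbitrary $A$-module $M$, the multiplicity of $P$ as a direct summand of $M$ equals the rank of the composition pairing
\[
\Hom_A(P,M)\times\Hom_A(M,P)\longrightarrow\End_A(P)\xrightarrow{\ \pi\ }k,\qquad (f,g)\longmapsto\pi(g\circ f),
\]
where $\pi$ denotes the projection of the local ring $\End_A(P)$ onto its residue field $k$. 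This is proved by writing $M=P^{a}\oplus N$ with $N$ having no summand isomorphic to $P$, and checking that any composite $P\to N\to P$ is a non-unit, so that the pairing restricts to a non-degenerate form on the $P^{a}$-part and degenerates elsewhere, whence the rank equals $a$.

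Applying this to $A=kG$, $P=P(S)$, $M=\Ind_{\widetilde{H}}^{G}(\widetilde{S})$ expresses the first multiplicity as the rank of the above pairing, and applying it to $A=k\widetilde{H}$, $P=P(\widetilde{S})$, $M=\Res^{G}_{\widetilde{H}}(S)$ expresses the second multiplicity as the rank of the analogous pairing over $k\widetilde{H}$. The next step is to rewrite the two $kG$-homomorphism spaces on the $\widetilde{H}$-side. Since $kG$ is free over $k\widetilde{H}$ and symmetric, induction is biadjoint to restriction, so both Frobenius adjunctions are available:
\[
\Hom_{kG}(P(S),\Ind_{\widetilde{H}}^{G}\widetilde{S})\cong\Hom_{k\widetilde{H}}(\Res^{G}_{\widetilde{H}}P(S),\widetilde{S}),\qquad
\Hom_{kG}(\Ind_{\widetilde{H}}^{G}\widetilde{S},P(S))\cong\Hom_{k\widetilde{H}}(\widetilde{S},\Res^{G}_{\widetilde{H}}P(S)).
\]
Tracking how the composite $g\circ f$ transforms under these two \emph{different} isomorphisms, and using the explicit unit and counit of the biadjunction in terms of a chosen set of coset representatives, I expect to obtain that the first multiplicity equals the rank of the form
\[
(\widehat{f},\widetilde{g})\longmapsto\pi_{S}\big(\tr_{\widetilde{H}}^{G}(\widetilde{g}\circ\widehat{f})\big)
\]
on $\Hom_{k\widetilde{H}}(\Res^{G}_{\widetilde{H}}P(S),\widetilde{S})\times\Hom_{k\widetilde{H}}(\widetilde{S},\Res^{G}_{\widetilde{H}}P(S))$, where $\tr_{\widetilde{H}}^{G}$ is the relative trace on endomorphism rings.

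The main obstacle is precisely this reconciliation: the naive guess that $\pi_{S}(g\circ f)$ equals $\widehat{f}\circ\widetilde{g}$ is \emph{false} (one sees this already for $\widetilde{H}=1$ and $\Ind_{1}^{G}k\cong kG$, where it would overcount), and the correct comparison forces the relative trace $\tr_{\widetilde{H}}^{G}$ into the picture. To finish, I would carry out the symmetric computation for the second multiplicity and show that the two resulting ranks agree. Here the decisive structural input is the self-duality $\Hom_{A}(X,Y)\cong\Hom_{A}(Y,X)^{*}$ valid over a symmetric algebra, which makes a bilinear form and its transpose share the same rank, together with the compatibility of $\tr_{\widetilde{H}}^{G}$ with this duality and with the projections onto the residue fields of $\End_{kG}(P(S))$ and $\End_{k\widetilde{H}}(P(\widetilde{S}))$. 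Once the two forms are identified up to this duality their ranks coincide, and the asserted equality of multiplicities follows. I expect the bookkeeping of the unit and counit of the Frobenius extension $k\widetilde{H}\subseteq kG$, rather than any input specific to Klein-four defect groups, to be the technically delicate part.
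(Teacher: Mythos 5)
First, a remark on the comparison you asked for implicitly: the paper does not prove this lemma at all --- it is quoted verbatim as \cite[Theorem 3]{Robinson} --- so your attempt has to stand on its own.

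Your first two steps are sound: the rank of the pairing $(f,g)\mapsto\pi(g\circ f)$ through $\End_A(P)/\rad$ does compute the multiplicity of $P$ in $M$, and the unit/counit bookkeeping for the Frobenius extension $k\widetilde{H}\subseteq kG$ does convert $g\circ f$ into $\tr_{\widetilde{H}}^{G}(\widetilde{g}\circ\widehat{f})$. The genuine gap is the final comparison. After transport, the first multiplicity is the rank of a form on $\Hom_{k\widetilde{H}}(\Res^G_{\widetilde{H}}P(S),\tilde{S})\times\Hom_{k\widetilde{H}}(\tilde{S},\Res^G_{\widetilde{H}}P(S))$, whose dimension in each variable is $[\Ind_{\widetilde{H}}^{G}\tilde{S}:S]$, while the second is the rank of a form on $\Hom_{k\widetilde{H}}(P(\tilde{S}),\Res^G_{\widetilde{H}}S)\times\Hom_{k\widetilde{H}}(\Res^G_{\widetilde{H}}S,P(\tilde{S}))$, of dimension $[\Res^G_{\widetilde{H}}S:\tilde{S}]$ in each variable. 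These dimensions differ in general: for $\widetilde{H}=\{1\}$, $\tilde{S}=k$, $S=k$, $G=C_p$ they are $p$ and $1$. So the two forms are not each other's transposes, and no duality $\Hom_A(X,Y)\cong\Hom_A(Y,X)^{*}$ can identify them --- indeed the module $\Res^G_{\widetilde{H}}S$ does not even occur in your first form. (As an aside, that duality over a symmetric algebra requires one argument to be projective; it fails already for the uniserial $k\SymmetricGroup_3$-module in characteristic $3$ with head $k$ and socle the sign representation.) As written, the endgame does not close.

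The missing link is the space $\Hom_{k\widetilde{H}}(\tilde{S},\Res^G_{\widetilde{H}}S)$, which is where the two computations must meet. Both multiplicities equal the dimension of the subspace of maps $\tilde{S}\to\Res^G_{\widetilde{H}}S$ that factor through a projective $k\widetilde{H}$-module. Indeed, the multiplicity of $P(S)$ in a $kG$-module $M$ equals the rank of $(\pi_S)_{*}\colon\Hom_{kG}(M,P(S))\to\Hom_{kG}(M,S)$ (any nonzero composite $N\to P(S)\to S$ forces $P(S)$ to split off $N$); taking $M=\Ind_{\widetilde{H}}^{G}\tilde{S}$ and using naturality of Frobenius reciprocity in the second variable, this is the rank of $(\Res\pi_S)_{*}\colon\Hom_{k\widetilde{H}}(\tilde{S},\Res^G_{\widetilde{H}}P(S))\to\Hom_{k\widetilde{H}}(\tilde{S},\Res^G_{\widetilde{H}}S)$, whose image is exactly the set of maps factoring through a projective (lift any factorization through a projective along the surjection $\Res^G_{\widetilde{H}}P(S)\to\Res^G_{\widetilde{H}}S$). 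Dually, since $P(\tilde{S})$ is the injective hull of $\tilde{S}$, that same subspace is the image of $\iota^{*}\colon\Hom_{k\widetilde{H}}(P(\tilde{S}),\Res^G_{\widetilde{H}}S)\to\Hom_{k\widetilde{H}}(\tilde{S},\Res^G_{\widetilde{H}}S)$, whose rank is the multiplicity of $P(\tilde{S})$ in $\Res^G_{\widetilde{H}}S$. Replacing your bilinear-form comparison by this identification of images repairs the argument; the rest of your setup can be kept.
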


\subsection{Trivial source modules and trivial source character tables}
\noindent In this subsection, we introduce trivial source modules and trivial source character tables. We refer the reader to {\cite{ThevenazBook}}, {\cite{Linckelmann1}}, {\cite{Linckelmann2}}, {\cite{LASpPermSuryey}}, {\cite{LuxPah}}, and {\cite{BoucThevenazPrimitiveIdempotents}} for more details.\par
\indent Given $R\in\{\mathcal{O},k\}$, an $RG$-lattice $M$ is called a \emph{trivial source} $RG$-lattice if it is isomorphic to an indecomposable  direct summand of an induced lattice $\Ind_{Q}^{G}(R)$, where $Q\leq G$ is a $p$-subgroup. In addition, if $Q$ is of minimal order subject to this property, then $Q$ is a vertex of~$M$. Recall that an $RG$-module $U$ is called a \emph{$p$-permutation $RG$-module} if $\Res_Q^G(U)$ is a permutation module for each $p$-subgroup $Q$ of $G$. Hence, the trivial source $kG$-modules are precisely the indecomposable~$p$-permutation $kG$-modules. In general, trivial source $kG$-modules afford several lifts to $\mathcal{O}G$-modules, but there is a unique one amongst these which is a trivial source $\mathcal{O} G$-module. We denote this trivial source lift by $\widehat{M}$ and we let~$\chi_{\widehat{M}}$ be the ordinary character afforded by~$K\otimes_{\mathcal{O}}\widehat{M}$.
Character values of trivial source modules have the following properties.

\begin{prop}\label{Landrock_and_Landrock-Scott}\label{Lifting_TS_Commutes_With_Ind}\noindent Let $M$ and $N$ be indecomposable trivial source $kG$-modules and let $x\in G$ be a $p$-element. Moreover, let $H\leq G$ be a subgroup of the group $G$. Then the following holds:
	\begin{enumerate}[label=\textup{(\alph*)}]
		\item the algebraic integer $\chi_{\widehat{M}}(x)$ is a non-negative integer equal to the multiplicity of the trivial $k\langle x\rangle$-module as a direct summand of $\Res_{\langle x \rangle}^G(M)$;
		\item $\chi_{\widehat{M}}(x)\neq 0$ if and only if $x$ belongs to a vertex of $M$;
        \item
        $\dim_k(\Hom_{kG}(M,N))= \langle \chi_{\widehat{M}}, \chi_{\widehat{N}}\rangle$,
	\noindent where we denote by $\langle -,-\rangle$ the usual scalar product of ordinary characters;
 \item if $L$ is an $\mathcal{O}H$-module then $\overline{\Ind_{H}^{G}(L)}\cong \Ind_H^G(\overline{L})$ as $kG$-modules;
 \item if $Z$ be a trivial source $kH$-module then $\chi_{\reallywidehat{\Ind_H^G(Z)}} = \Ind_H^G(\chi_{\widehat{Z}})$.
	\end{enumerate}
\end{prop}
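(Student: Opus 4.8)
The plan is to prove the five assertions of \Cref{Landrock_and_Landrock-Scott} in a logical order, building the later parts on the earlier ones and on results already available. Parts (a) and (b) concern character values at $p$-elements and are essentially the content of the classical results of Landrock and Landrock--Scott; parts (c)--(e) are structural statements that relate Hom-spaces, $p$-modular reduction, and induction. I would handle (d) and (e) first, since they are the most elementary and are needed to make the notation $\chi_{\reallywidehat{\Ind_H^G(Z)}}$ meaningful, then return to (a)--(c).

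For part (d), I would argue that $p$-modular reduction commutes with induction. Writing $\ovl{L} = k \otimes_{\mathcal{O}} L$, one has canonical isomorphisms
\begin{equation*}
\ovl{\Ind_H^G(L)} = k \otimes_{\mathcal{O}} \big(\mathcal{O}G \otimes_{\mathcal{O}H} L\big) \cong kG \otimes_{kH} (k \otimes_{\mathcal{O}H} L) = \Ind_H^G(\ovl{L}),
\end{equation*}
where the middle isomorphism is associativity/base-change of the tensor product, and I would check it is $kG$-linear. For part (e), the key point is that the trivial source $\mathcal{O}G$-lift is characterised uniquely among lifts (as recalled in the text), so it suffices to exhibit \emph{one} trivial source $\mathcal{O}G$-lift of $\Ind_H^G(Z)$ whose character is $\Ind_H^G(\chi_{\widehat Z})$. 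I would take $\widehat Z$ the trivial source lift of $Z$ and show that $\Ind_H^G(\widehat Z)$ is a trivial source $\mathcal{O}G$-lattice (induction sends trivial source lattices to sums of trivial source lattices, since it sends $\Ind_Q^H(\mathcal{O})$ to $\Ind_Q^G(\mathcal{O})$ and is additive), and that by part (d) its reduction is $\Ind_H^G(\ovl{\widehat Z}) \cong \Ind_H^G(Z)$; uniqueness of the trivial source lift then forces $\reallywidehat{\Ind_H^G(Z)} = \Ind_H^G(\widehat Z)$, and the character of an induced $\mathcal{O}$-lattice is the induced character by the usual formula, giving $\chi_{\reallywidehat{\Ind_H^G(Z)}} = \Ind_H^G(\chi_{\widehat Z})$.

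For parts (a) and (b), I would cite and assemble the Brauer-character description of trivial source modules. The crucial input is that for a $p$-element $x$, the value $\chi_{\widehat M}(x)$ can be computed via the Brauer construction: restricting to the cyclic $p$-group $\langle x\rangle$, the lattice $\Res_{\langle x\rangle}^G(\widehat M)$ is again a trivial source $\mathcal{O}\langle x\rangle$-lattice (restriction preserves the trivial source property), hence a permutation lattice since $\langle x\rangle$ is a $p$-group, and so its character at $x$ counts fixed points, i.e.\ the number of trivial $k\langle x\rangle$-summands of $\Res_{\langle x\rangle}^G(M)$; this is a non-negative integer, proving (a). Part (b) then follows because $\Res_{\langle x\rangle}^G(M)$ has a trivial direct summand exactly when $M$ has a source that survives restriction to $\langle x\rangle$, which by the theory of vertices and sources happens precisely when $x$ lies in a vertex of $M$. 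These are standard facts, and I would reference the Landrock/Landrock--Scott results cited in the text rather than reprove them.

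Finally, part (c) is the assertion that $\dim_k \Hom_{kG}(M,N) = \langle \chi_{\widehat M}, \chi_{\widehat N}\rangle$. The natural route is to pass to the lifts: since $\widehat M, \widehat N$ are $\mathcal{O}G$-lattices lifting $M,N$, and $\Hom_{\mathcal{O}G}(\widehat M,\widehat N)$ is an $\mathcal{O}$-lattice whose rank equals $\langle \chi_{\widehat M}, \chi_{\widehat N}\rangle$ (the standard inner-product formula for characters), it suffices to show $\dim_k \Hom_{kG}(M,N) = \rk_{\mathcal{O}} \Hom_{\mathcal{O}G}(\widehat M, \widehat N)$. For arbitrary lifts this can fail, so the main obstacle is exactly here: one must use that $M$ and $N$ are \emph{trivial source} modules, which forces $\Hom_{\mathcal{O}G}(\widehat M,\widehat N)$ to reduce without loss, i.e.\ $k \otimes_{\mathcal{O}} \Hom_{\mathcal{O}G}(\widehat M,\widehat N) \cong \Hom_{kG}(M,N)$. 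I expect to establish this by reducing via adjunction and additivity to the case of permutation lattices $\widehat M = \Ind_Q^G(\mathcal{O})$, $\widehat N = \Ind_{Q'}^G(\mathcal{O})$, where both Hom-spaces acquire explicit bases indexed by double cosets (a Mackey-type count) of the same cardinality, so the $\mathcal{O}$-rank and the $k$-dimension agree. This double-coset/Mackey computation, together with the lifting-commutes-with-induction from part (d), is the technical heart of the statement.
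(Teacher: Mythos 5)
Your proposal is correct, and for part (e) it is exactly the paper's argument: exhibit $\Ind_H^G(\widehat Z)$ as a trivial source $\mathcal{O}G$-lift of $\Ind_H^G(Z)$ via (d) and invoke uniqueness of trivial source lifts (summand by summand). The difference is one of depth rather than route: the paper disposes of (a)--(d) purely by citation (Landrock II.12.6 for (a),(b); Landrock II.12.4 iii) and I.14.8 for (c); Karpilovsky for (d)), whereas you unpack those citations into actual arguments. Your base-change proof of (d) and your fixed-point count for (a) over the cyclic $p$-group $\langle x\rangle$ are the standard proofs and are sound (note the harmless slip in (d): the intermediate term should be $k\otimes_{\mathcal{O}}L$, or equivalently $kH\otimes_{\mathcal{O}H}L$, not ``$k\otimes_{\mathcal{O}H}L$''). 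Your treatment of (c) is the most substantial added content: you correctly identify that the whole point is surjectivity of the reduction map $\Hom_{\mathcal{O}G}(\widehat M,\widehat N)\to\Hom_{kG}(M,N)$, which fails for general lattices, and your reduction to permutation lattices plus the double-coset basis argument (orbit sums map to orbit sums) is a legitimate proof of Scott's theorem, which is what the paper's citation of Landrock II.12.4 iii) amounts to. For (b) your justification is looser --- the clean argument goes through the Brauer quotient, namely $\chi_{\widehat M}(x)=\dim_k M[\langle x\rangle]$ and $M[Q]\neq 0$ iff $Q$ is contained in a vertex --- but since you explicitly defer to the cited Landrock--Scott result there, as the paper itself does, this is not a gap.
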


\begin{proof}
    For Parts (a) and (b) see  {\cite[Lemma II. 12.6]{Landrock}}. For Part (c) combine {\cite[Theorem II.12.4 iii)]{Landrock}} and {\cite[Proposition I.14.8]{Landrock}}. For Part (d) see \cite[I.3 Proposition 4.11]{KarpilovskyVol4}. As the $kH$-module $Z$ lifts to an $\mathcal{O}H$-module $Y$ which is unique up to isomorphism, the $kH$-module~$\Ind_H^G(Z)$ lifts by (d) to the $\mathcal{O}G$-module~$\Ind_H^G(Y)$. The assertion of (e) follows.
\end{proof}

We will study trivial source modules vertex by vertex.  Hence, we denote by $\TS(G;Q)$ (resp. $\TS(B;Q)$) the set of isomorphism classes of indecomposable trivial source $kG$-modules (resp. trivial source $B$-modules) with vertex $Q$, when $B$ is a $p$-block of $G$. We notice that  $\TS(G;\{1\})$ is precisely  the set of isomorphism classes of PIMs of $kG$ and if $M$ is a PIM of $kG$, then $\chi_{\widehat{M}}=\Phi_{\varphi}$ where~$\varphi$ is the Brauer character afforded by the unique simple $kG$-module in the socle of $M$.

\begin{prop}\label{Omnibus_properties}
	\begin{enumerate}[label=\textup{(\alph*)}]
		\item The $p$-permutation modules are preserved under direct sums, tensor products, inflation, restriction and induction.
		\item If $Q\leq G$ is an $p$-subgroup of $G$ and $M$ is a $p$-permutation $kG$-module, then $M[Q]$ is a $p$-permutation $k\overline{N}_{G}(Q)$-module. 
		\item The vertices of a trivial source $kG$-module $M$ are the maximal $p$-subgroups $Q$ of $G$ such that $M[Q]\neq\{0\}$.
		\item A trivial source $kG$-module $M$ has vertex $Q$ if and only if $M[Q]$ is a non-zero projective $k\overline{N}_{G}(Q)$-module.
		Moreover, if this is the case, then the $kN_{G}(Q)$-Green correspondent $f(M)$ of $M$ is $M[Q]$ (viewed as a $kN_{G}(Q)$-module). Thus, there are \smallskip bijections:
		\begin{center}
			\begin{tabular}{ccccc}
				$\TS(G;Q)$      & $\longleftrightarrow$ &    $\TS(N_{G}(Q);Q)$   &  $\longleftrightarrow$ & $\TS(\overline{N}_G(Q);\TrivialGroup)$  \\
				$M$      & $\mapsto$     &   $f(M)$      &  $\mapsto$     & $M[Q].$
			\end{tabular}
		\end{center}
		These sets are also in bijection with the set of $p'\textup{-conjugacy classes of }\overline{N}_G(Q)$. 
		\item Let $H\leq G$. Then the Scott module $\Sc(G,H)$ is a trivial source $kG$-module lying in~$B_{0}(G)$.  If $Q\in\textup{Syl}_{p}(H)$, then $Q$ is a vertex of $\Sc(G,H)$ and $\Sc(G,H)\cong\Sc(G,Q)$. 
		\item The trivial source $kG$-modules, together with their vertices, are preserved under splendid Morita equivalences. 
		\item The trivial source $kG$-modules are preserved under taking duals.
		\item Let $H\leq G$. Then a $kH$-module $M$ is a trivial source module if and only if ${}^{g}(M)$ is a trivial source $k[{}^gH]$-module.
	\end{enumerate}
\end{prop}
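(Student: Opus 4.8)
The plan is to treat this statement as a compendium of standard facts about $p$-permutation and trivial source modules, and to prove each item by reduction to the two foundational tools of the theory: the reformulation that $M$ is a $p$-permutation module if and only if $\Res^G_P(M)$ is a permutation $kP$-module for one (equivalently every) $P\in\Syl_p(G)$, together with Mackey's formula; and the identity $(k\Omega)[Q]\cong k(\Omega^Q)$ expressing the Brauer construction of a permutation module $k\Omega$ as the permutation module on the fixed-point set $\Omega^Q$. Most items then follow by citing the foundational references collected above (\cite{ThevenazBook}, \cite{Linckelmann1}, \cite{Linckelmann2}, \cite{LASpPermSuryey}, \cite{BoucThevenazPrimitiveIdempotents}), supplying a short direct argument wherever that is cleaner than locating a precise statement.

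For (a), closure under direct sums and restriction is immediate from the definition. Closure under tensor products follows because the tensor product of two permutation $kQ$-modules is again a permutation module, with permutation basis the set of products of the two given bases. Closure under inflation and induction follows from Mackey's formula: upon restricting an inflated or induced module to a $p$-subgroup, one obtains a direct sum of modules induced from permutation modules, each of which is again a permutation module. For (b), I would use that any $p$-permutation module $M$ is a direct summand of a genuine permutation module $k\Omega$; since $M\mapsto M[Q]$ is an additive functor, $M[Q]$ is a direct summand of $(k\Omega)[Q]\cong k(\Omega^Q)$, and the latter is a permutation $k\overline{N}_G(Q)$-module, whence $M[Q]$ is a $p$-permutation $k\overline{N}_G(Q)$-module.

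Parts (c) and (d) are the heart of the theory, asserting that the Brauer construction detects vertices and computes the Green correspondent. Here I would cite the corresponding statements in \cite{ThevenazBook} and \cite{LASpPermSuryey} rather than reprove them. The chain of bijections in (d) then assembles the vertex characterisation with the Green correspondence $\TS(G;Q)\leftrightarrow\TS(N_G(Q);Q)$ on the one side, and, on the normaliser side, the equivalence $M\mapsto M[Q]$ between trivial source $kN_G(Q)$-modules with vertex $Q$ and non-zero projective $k\overline{N}_G(Q)$-modules, equivalently PIMs of $k\overline{N}_G(Q)$, which are counted by the $p'$-conjugacy classes of $\overline{N}_G(Q)$. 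I expect (d) to be the one place where care is genuinely needed, but the difficulty is entirely one of correctly assembling and aligning the standard results, not of devising a new argument.

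Finally, for (e) I would invoke the standard theory of Scott modules: $\Sc(G,H)$ is by definition the unique indecomposable direct summand of $\Ind^G_H(k)$ containing the trivial module in its head, it is self-dual and lies in $B_0(G)$, and its vertex is a Sylow $p$-subgroup $Q$ of $H$ with $\Sc(G,H)\cong\Sc(G,Q)$; this is recorded in \cite{LASpPermSuryey} and \cite{ThevenazBook}. Parts (f)--(h) are then quick. For (f), a splendid Morita equivalence is by definition induced by a $p$-permutation bimodule with twisted diagonal vertices, and such a functor sends trivial source modules to trivial source modules preserving vertices (cite \cite{Linckelmann2}). For (g), a permutation module is self-dual, so $M^{*}$ is a direct summand of $(\Ind^G_Q k)^{*}\cong\Ind^G_Q k$ and dualising preserves indecomposability, hence $M^{*}$ is again trivial source. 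For (h), the claim is immediate transport of structure along the isomorphism $kH\to k[{}^gH]$, under which permutation modules, indecomposability, and the trivial source property are all preserved.
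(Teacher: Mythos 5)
Your proposal is correct and follows essentially the same route as the paper: the substantive items (the Brauer-quotient characterisation of vertices, the Green-correspondence bijections, the Scott module facts, and preservation under splendid Morita equivalences) are handled by citing the standard literature, while the routine closure properties are dispatched by short direct arguments. The only cosmetic difference is that you supply the standard self-contained arguments for (a), (b) and (g) (e.g.\ self-duality of $\Ind_Q^G(k)$ for (g)) where the paper simply cites Brou\'e and Alperin; the paper's only worked-out argument is the conjugation isomorphism ${}^g(\Ind_J^H(L))\cong\Ind_{{}^gJ}^{{}^gH}({}^gL)$ for (h), which matches your transport-of-structure argument.
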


\begin{proof}
\noindent Statements (a) to (e) are proved in~\cite{Bro}. Statement (f) follows from {\cite[Theorem 6.4.10 \& Theorem 9.7.4]{Linckelmann2}}. Part (g) follows from \cite[Lemma III.5]{Alperin}. Part (h) follows because of the following argument. For all $g\in G$ and for all subgroups $J$ of~$H$ and for all $kJ$-modules $L$ we have the $k[{}^gH]$-module isomorphism ${}^g(\Ind_{J}^{H}(L)) \cong \Ind_{{}^gJ}^{{}^gH}({}^gL)$. The claim follows now from specialising to the case in which $L$ is the trivial $kJ$-module.
\end{proof}

Next, we let $a(kG, \Triv)$ denote the trivial source ring of $kG$, which is defined to be the subring
of the Green ring of $kG$ generated by the set of all isomorphism classes of trivial source $kG$-modules. Notice that this ring is finitely generated. By definition, the trivial source character
table of the group $G$ at the prime $p$, denoted $\Triv_p(G)$, is the species table of the trivial source
ring of $kG$. See e.g. \cite{BenPar}. However, we follow \cite[Section 4.10]{LuxPah} and consider $\Triv_p(G)$ as
the block square matrix defined according to the following convention.

\begin{conv}\label{conv:tsctbl}%
First, fix  a set of representatives $Q_1,\ldots, Q_r$ ($r\in\IZ_{\geq 1}$) for the conjugacy classes of~$p$-subgroups of $G$ where $Q_{1}:=\{1\}$, $Q_{r}\in\Syl_{p}(G)$ and $|Q_1|\leq \ldots \leq |Q_r|$. For each $1\leq v\leq r$ set~$\overline{N}_{G}(Q_v):=N_{G}(Q_{v})/Q_{v}$.  
	For each pair $(Q_{v},s)$ with $1\leq v\leq r$ and $s\in [\overline{N}_{G}(Q_v)]_{p^\prime}$ there is a ring homomorphism  
	\begin{center}
		\begin{tabular}{cccl}
			$\tau_{Q_{v},s}^{G}$\,:            &   $a(kG,\mbox{Triv})$      & $\lra$ &    $K$     \\
			&   $[M]$      & $\mapsto$     &   $\varphi^{}_{M[Q_{v}]}(s)$   
		\end{tabular}
	\end{center}
	mapping the class of a trivial source $kG$-module $M$ to the value at~$s$ of the Brauer character~$\varphi^{}_{M[Q_{v}]}$ of the Brauer quotient $M[Q_{v}]$. 
	(Note  that the group $G$ acts by conjugation on the pairs $(Q_{v},s)$ and the values of $\tau_{Q_{v},s}^{G}$ do not depend on the choice of  $(Q_{v},s)$ in its $G$-orbit.)
	Then, for each $1\leq i,v\leq r$ define a matrix
	$$T_{i,v}:=\Big( \tau_{Q_{v},s}^{G}([M])\Big)_{\substack{M\in \TS(G;Q_{i})\\s\in [\overline{N}_{G}(Q_v)]_{p^\prime}}}\,.$$
	The \emph{trivial source character table of  $G$ at the prime $p$} is then the block matrix 
	$$\Triv_{p}(G):=\Big[T_{i,v}\Big]_{\substack{1\leq i\leq r\\1\leq v\leq r}}\,.$$
    For convenience, we will label the columns of $\Triv_p(G)$ by representatives of the $p^\prime$-elements of $\overline{N}_G(Q_v)$ in $N_G(Q_v)$ ($1\leq v\leq r$). This is possible e.g. by \cite[Lemma 3.1.12]{BBthesis}. Moreover, we label the rows of $\Triv_{p}(G)$  with the ordinary characters $\chi^{}_{\widehat{M}}$ instead of the isomorphism classes of  trivial source $kG$-modules $M$ themselves. Whenever we give a concrete example of a trivial source character table with entries in $\C$, we follow \cite{LuxPah} with the choice of the $p$-modular system. For brevity, we frequently write $N_G(Q_i)$ as $N_i$ and $\overline{N}_G(Q_i)$ as $\overline{N}_i$.
\end{conv}

\begin{rem}\label{rem:tsctbl}
	\begin{enumerate}[label=\textup{(\alph*)}]
		\item The block $T_{i,i}$ consists of the values of projective indecomposable characters of $\overline{N}_{i}$ at the $p'$-conjugacy classes of $\overline{N}_{i}$. In particular, $T^{}_{1,1}$ consists of the values of projective indecomposable characters of $G$ at the $p'$-conjugacy classes of $G$. 
		\item The trivial $kG$-module $k$ is a trivial source module with vertex $Q_{r}$ and  $k=M_{(Q_{r},k)}$.  For every $1\leq v\leq r$ and every $s\in[\overline{N}_{v}]_{p'}$ we have $\tau_{Q_{v},s}^{G}(k)=1$ since  $k[Q_{v}]=k$. 
		\item For $p$-subgroups $Q_{i},Q_{v}$ of $G$, it follows immediately from the definition and \cref{Omnibus_properties}(c) that $\tau^{G}_{Q_{v},s}([M_{(Q_{i},E)}])=0$ unless $Q_{v}\leq_{G} Q_{i}$. In other words $T_{i,v}=\mathbf{0}$ if $Q_{v}\not\leq_{G} Q_{i}$. 
		\item If $v=1$ and $1\leq i\leq r$, then $M[\TrivialGroup]=M$ and so 
		$$\tau_{\TrivialGroup,s}^{G}([M])=\chi^{}_{\widehat{M}}(s)$$
		for every $M\in \TS(G;Q_{i})$ and every $s\in [G]_{p'}$. In particular, for $M\in \TS(G;Q_{i})$ we have
		$$\tau_{{\TrivialGroup},1}^{G}([M])=\dim_{k}M\,.$$
		This explains the terminology \emph{trivial source character table} and our labelling of the rows by the characters. 
		\item More generally, if $(Q,s)\in \mathcal{Q}_{G,p}$ with $s=1$ and $M$ is any $p$-permutation $kG$-module, then  $\tau^{G}_{Q,1}([M])=\dim_{k} M[Q]$.
	\end{enumerate}
\end{rem}

\begin{lem}[{\cite[Lemma 6.2]{Ric} or \cite[10.13. LEMMA]{BenPar}}]\label{CorollaryRickard}\label{LemmaRickard}
Let $M$ be an indecomposable $kG$-module with a trivial source. Let $g_p$ and $g_{p^\prime}$ be commuting elements of $G$ with $g_p$ a $p$-element and $g_{p'}$ a $p'$-element. Write $g=g_p\cdot g_{p'}$. Then
        $$
        \chi_{\widehat{M}}(g) 
            = \tau_{\langle g_p\rangle, g_{p^{\prime}}}^G([M])\,.
        $$    
\end{lem}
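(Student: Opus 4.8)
The plan is to reduce the statement to the cyclic group $H:=\langle g\rangle$ and then to compute both sides explicitly there. The two key observations are that each side is additive on direct sums of trivial source modules and that each side is insensitive to passing from $G$ to $H$. For the left-hand side, since $g\in H$ we have $\chi^{}_{\widehat M}(g)=\bigl(\Res^G_H\chi^{}_{\widehat M}\bigr)(g)$, and $\Res^G_H\widehat M$ is a trivial source $\mathcal{O}H$-lattice reducing to $\Res^G_H M$; by the uniqueness of trivial source lifts it is the direct sum of the lifts $\widehat{M_i}$ of the indecomposable summands $M_i$ of $\Res^G_H M$, whence $\chi^{}_{\widehat M}(g)=\sum_i\chi^{}_{\widehat{M_i}}(g)$. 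For the right-hand side, the Brauer quotient at $\langle g_p\rangle$ is computed inside $\langle g_p\rangle$ alone, so $(\Res^G_H M)[\langle g_p\rangle]$ is $M[\langle g_p\rangle]$ restricted to $\overline{N}_H(\langle g_p\rangle)$; as $g_{p'}\in\overline{N}_H(\langle g_p\rangle)_{p'}$ this gives $\tau^G_{\langle g_p\rangle,g_{p'}}([M])=\tau^H_{\langle g_p\rangle,g_{p'}}([\Res^G_H M])=\sum_i\tau^H_{\langle g_p\rangle,g_{p'}}([M_i])$, using additivity of the Brauer construction and of Brauer characters. Thus it suffices to prove the identity when $G=H=\langle g\rangle$ is cyclic and $M$ is an indecomposable trivial source module.

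So I would now assume $G=\langle g\rangle=P\times C$, where $P:=\langle g_p\rangle$ is the (cyclic) Sylow $p$-subgroup and $C:=\langle g_{p'}\rangle$ is a cyclic $p'$-group. Since $kC$ is semisimple and $P$ is a cyclic $p$-group, the indecomposable trivial source $kG$-modules are exactly the modules $M=\Ind_{R\times C}^{G}\bigl(k_{R,\lambda}\bigr)$, where $R\leq P$, $\lambda$ runs over the one-dimensional $kC$-modules, and $k_{R,\lambda}$ denotes the one-dimensional $k[R\times C]$-module on which $R$ acts trivially and $C$ acts via $\lambda$; such an $M$ has vertex $R$. By \cref{Lifting_TS_Commutes_With_Ind}(d),(e) its trivial source lift has character $\chi^{}_{\widehat M}=\Ind_{R\times C}^{G}\bigl(\widetilde\lambda\bigr)$, where $\widetilde\lambda$ is the ordinary character of $C$ lifting $\lambda$.

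Two cases then remain. If $R=P$, then $R\times C=G$, so $M=k_\lambda$ is one-dimensional with $P$ acting trivially; here $M[P]=M=k_\lambda$ as a $kC$-module (the traces $\tr_R^P$ vanish for $R<P$ because $[P:R]$ is a positive power of $p$), giving $\tau^G_{P,g_{p'}}([M])=\lambda(g_{p'})$, while $\chi^{}_{\widehat M}(g)=\widetilde\lambda(g)=\widetilde\lambda(g_{p'})=\lambda(g_{p'})$ since $g_p$ acts trivially and $\widetilde\lambda,\lambda$ agree on the $p'$-element $g_{p'}$. If $R<P$, then $g_p\notin R$ because $g_p$ generates $P$; the Brauer quotient of the permutation module $\Ind_R^P(k)$ at $P$ is spanned by the $P$-fixed cosets of $P/R$, of which there are none, so $M[P]=0$ and $\tau^G_{P,g_{p'}}([M])=0$, while $\chi^{}_{\widehat M}(g)=\Ind_{R\times C}^{G}(\widetilde\lambda)(g)=0$ since $G$ is abelian and $g\notin R\times C$. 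In every case the two sides agree, completing the cyclic computation and hence the proof.

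The main obstacle I anticipate is not the explicit cyclic calculation, which is routine, but the bookkeeping in the reduction step: one must check carefully that $\Res^G_{\langle g\rangle}\widehat M$ decomposes precisely as the sum of the trivial source lifts of the indecomposable summands of $\Res^G_{\langle g\rangle}M$ (this rests on the uniqueness of the trivial source lift together with the Krull--Schmidt theorem) and that the species value is genuinely computed inside $\langle g_p\rangle$, so that it is unchanged by restriction; these two compatibilities are what make the reduction to the cyclic case legitimate.
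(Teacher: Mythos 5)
Your proof is correct, and it takes a genuinely different route from the paper: the paper does not argue this lemma at all, but simply cites Rickard's Lemma~6.2 and Benson--Parker, whereas you give a self-contained argument. Your two-step strategy (reduce to the cyclic group $\langle g\rangle$ using that both sides are additive and compatible with restriction, then classify the indecomposable trivial source modules over $P\times C$ as the $\Ind_{R\times C}^{G}(k_{R,\lambda})$ via Green's indecomposability theorem and compute both sides directly) is sound; the delicate points you flag are exactly the right ones, and they go through: $\Res^G_{\langle g\rangle}\widehat M$ is again a $p$-permutation $\mathcal{O}\langle g\rangle$-lattice by Mackey, so Krull--Schmidt plus uniqueness of trivial source lifts identifies its summands with the lifts $\widehat{M_i}$, and the Brauer construction $M[Q]=M^{Q}\big/\sum_{R<Q}\tr_{R}^{Q}(M^{R})$ together with the Brauer character value at $g_{p'}$ visibly depend only on the action of $\langle g\rangle$. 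One small step worth making explicit in the case $R<P$ is the Mackey identification $\Res^G_P\Ind_{R\times C}^G(k_{R,\lambda})\cong\Ind_R^P(k)$ (using $G=P\cdot(R\times C)$ and $P\cap(R\times C)=R$), which justifies computing the Brauer quotient on the permutation basis $P/R$. What your approach buys is transparency and independence from the literature; what the paper's citation buys is brevity, and the references it invokes in effect encapsulate the same reduction-to-cyclic-subgroups computation you have written out.
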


Finally, we will need the following basic and well-known lemma about trivial source characters in blocks with defect groups of order two which easily follows from the theory of Brauer tree algebras (see \cite[Chapters 1-4]{HissLux} or \cite[Chapter V]{Alperin}).

\begin{lem}\label{BrauerTreeTrivialSourceModules}
	Let $G$ be a finite group and let $B\in\Bl(kG)$ be a block of the group algebra $kG$ with a cyclic defect group $D=\langle x \rangle$ of order two. Let $S$ be the unique simple $B$-module. Then, the following assertions hold:
	\begin{enumerate}[label=\textup{(\alph*)}]
        \item up to isomorphism, there are exactly two non-isomorphic indecomposable $kG$-modules that belong to $B$, namely the simple module $S$ and its projective cover $P(S)$;
		\item the $kG$-module $S$ has a trivial source and vertex $D$;
  \item the character $\chi_{\widehat{S}}$ is the unique element of $\Irr_K(B)$ which takes a positive value at $x$.
	\end{enumerate} 
\end{lem}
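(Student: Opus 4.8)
The prime here must be $p=2$, since the defect group $D=\langle x\rangle$ is a $p$-group of order $2$. I would prove the three assertions in the order (a), (b), (c), because (b) and (c) both rely on the structural description established in (a).

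For (a), the plan is to invoke the theory of blocks with cyclic defect groups. A block with cyclic defect group of order $p$ has inertial index $e$ dividing $p-1$; with $p=2$ this forces $e=1$, so the inertial quotient is trivial and $B$ is a nilpotent block. By Puig's theorem on nilpotent blocks, $B$ is splendidly Morita equivalent to $kD=kC_2$. A Morita equivalence is an equivalence of module categories, hence induces a bijection between the indecomposable $B$-modules and the indecomposable $kC_2$-modules that carries simple modules to simple modules and projective covers to projective covers. Since $kC_2=k[x]/(x^2-1)$ is a local uniserial algebra of Loewy length $2$, it has exactly two indecomposable modules, the trivial module and the free module $kC_2$, the latter being the projective cover of the former. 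Transporting this back yields exactly two indecomposable $B$-modules, namely $S$ and $P(S)$, which are distinct because $kC_2$ is not semisimple. In particular $B$ has a unique simple module, decomposition matrix $\binom{1}{1}$, and exactly two ordinary irreducible characters $\chi_1,\chi_2$, both of degree $\dim_k S$.

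For (b), I would argue via vertices and sources. By (a) we have $S\neq P(S)$, so $S$ is not projective and its vertex is nonzero. Every indecomposable $B$-module has a vertex contained, up to conjugacy, in the defect group $D$; since $|D|=2$, the only nonzero possibility is $D$ itself, so $S$ has vertex $D=\langle x\rangle$. A source of $S$ is an indecomposable $kD$-module, i.e. either $k$ or the free module $kD$. Were the source equal to $kD$, then $S$ would be a summand of $\Ind_D^G(kD)\cong kG$, forcing $S$ to be projective, a contradiction. Hence the source is the trivial module $k$, so $S$ is a trivial source module. (This also follows immediately from \cref{Omnibus_properties}(f): the splendid Morita equivalence of (a) carries the trivial source $kC_2$-module $k$, which has vertex $C_2$, to the simple module $S$.)

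For (c), by (b) the module $S$ admits a trivial source lift $\widehat S$ whose character $\chi_{\widehat S}$ has degree $\dim_k S$ and has all irreducible constituents in $B$; since both irreducible characters of $B$ have degree $\dim_k S$ by (a), $\chi_{\widehat S}$ must equal a single one of them, say $\chi_{\widehat S}=\chi_1$. As $x$ is a nontrivial $p$-element lying in the vertex $D=\langle x\rangle$ of $S$, \cref{Landrock_and_Landrock-Scott}(a),(b) gives that $\chi_{\widehat S}(x)$ is a strictly positive integer. Finally, the projective indecomposable character of the unique simple module is $\Phi_\varphi=\chi_1+\chi_2$, and being the character of a projective module it vanishes on the $p$-singular element $x$; hence $\chi_2(x)=-\chi_{\widehat S}(x)<0$, so $\chi_{\widehat S}=\chi_1$ is the unique element of $\Irr_K(B)$ positive at $x$. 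The genuine content is concentrated in (a), the reduction of $B$ to its Morita type via cyclic-defect/nilpotent-block theory; once the module category is pinned down, (b) and (c) are short deductions, the only subtle point being the degree argument in (c) showing that $\chi_{\widehat S}$ is itself irreducible rather than a proper sum.
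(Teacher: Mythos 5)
Your proof is correct, and its overall architecture coincides with the paper's: reduce $B$ to $kC_2$ up to Morita equivalence, read off the two indecomposables and the decomposition matrix $\binom{1}{1}$, and then combine the vanishing of the projective indecomposable character $\chi_\alpha+\chi_\beta$ at the $2$-singular element $x$ with the Landrock positivity criterion (\cref{Landrock_and_Landrock-Scott}(a)\&(b)) to single out $\chi_{\widehat S}$. The differences are in how two sub-steps are justified. For the Morita equivalence you invoke nilpotency ($e\mid p-1=1$, then Puig's theorem), whereas the paper appeals to Brauer tree theory for cyclic defect via Alperin; both are standard and equally valid here, though your parenthetical claim that the equivalence is automatically \emph{splendid} deserves a word of justification (it does hold, since the only indecomposable endopermutation $kC_2$-module with vertex $C_2$ is the trivial one, but your argument for (b) does not actually need it). For part (b) the paper cites Michler's counting of modules with a given source, while you give a more elementary and self-contained argument: the vertex of the non-projective module $S$ must be all of $D$, and the only non-projective indecomposable $kD$-module, hence the only possible source, is $k$. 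Your degree argument in (c) showing that $\chi_{\widehat S}$ is a single irreducible character is a point the paper leaves implicit, so making it explicit is a small improvement.
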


\section{The trivial source character tables of the Klein four-group \texorpdfstring{$V_4$}{V4}, the alternating group \texorpdfstring{$\AlternatingGroup_4$}{A4}, and the alternating group \texorpdfstring{$\AlternatingGroup_5$}{A5}}
\noindent From now on, we assume that $p=2$ and that $k$ is an algebraically closed field of characteristic~$2$. \indent The three cases of $V_4, \AlternatingGroup_4$, and $\AlternatingGroup_5$ are well-known and easy to prove. However, we give the results in order to set up the notation for the splendid Morita equivalences which are considered in the sequel.

\subsection{The Klein four-group \texorpdfstring{$V_4$}{V4}}\label{Notation_V4}
\noindent In this subsection we assume that $G=V_4=\langle a , b \rangle\leq \SymmetricGroup_4$, where $a:=(1,2)(3,4)$ and $b:=(1,3)(2,4)$. The ordinary character table of $V_4$ is as given in \cref{table:ct_C_2_times_C_2}.

\begin{table}[H]
	\centering
	\begin{tabular}{@{}l@{}l@{}l@{}}
		\hline
		\(\begin{array}{|l|cccc|}
			& 1 & a & b & ab\\ \hline
			\chi_{1} & 1 & 1 & 1 & 1\\
			\chi_{2} & 1 & 1 & -1 & -1\\
			\chi_{3} & 1 & -1 & 1 & -1\\
			\chi_{4} & 1 & -1 & -1 & 1\\
			\hline
		\end{array}\)\\
	\end{tabular}
	\caption{Ordinary character table of $V_4$}
	\label{table:ct_C_2_times_C_2}
\end{table}

\noindent We set
$$Q_1:=\TrivialGroup,\quad Q_2:=\langle a\rangle,\quad  Q_3:=\langle b\rangle,\quad  Q_4:=\langle ab\rangle,\quad  \textup{and}\quad  Q_5:=V_4.$$
Furthermore, $\mathscr{S}_2(V_4) = \{Q_1, Q_2, Q_3, Q_4, Q_5 \}$. The lattice of subgroups in~$\mathscr{S}_2(V_4)$ is given as follows:
$$
\resizebox{0.2\textwidth}{!}{
\xymatrix{ & Q_5 & \\
	Q_2 \ar@{-}[ur] & Q_3 \ar@{-}[u] & Q_4 \ar@{-}[ul] \\
	& Q_1 \ar@{-}[ul] \ar@{-}[u] \ar@{-}[ur]	& }
    }
    $$

\begin{lem}\label{lem:Normalisers_and_Quotients_V4}
The following assertions hold:
\begin{table}[H]
	\centering
	{\resizebox{\textwidth}{!}{
	\begin{tabular}{@{}l@{}l@{}l@{}l@{}l@{}l@{}l@{}l@{}l@{}l@{}l@{}l@{}l@{}l@{}}
		\(\begin{array}{ccccc}
   N_{V_4}(Q_1) = {V_4};\ & N_{V_4}(Q_2) = {V_4};\ & N_{V_4}(Q_3) = {V_4};\ & N_{V_4}(Q_4) = {V_4};\ & N_{V_4}(Q_5) = {V_4};\\ [0.5ex]
   \overline{N}_{V_4}(Q_1) \cong {V_4}; & \overline{N}_{V_4}(Q_2) \cong Q_3 \cong C_2; & \overline{N}_{V_4}(Q_3) \cong Q_2 \cong C_2; & \overline{N}_{V_4}(Q_4) \cong C_2; & \overline{N}_{V_4}(Q_5) \cong Q_1.\\
		\end{array}\)
	\end{tabular}
}}
\end{table}
\end{lem}

\begin{proof}
    Direct computation.
\end{proof}

\begin{lem}\label{PropertiesTS_V4_modules}
	Setting $M_i:=\Ind_{Q_i}^{V_4}(k)$ for $1\leq i\leq 5$, the following assertions about the trivial source $kV_4$-modules hold:
$\TS(V_{4};Q_i) = \{ \Ind_{Q_i}^{V_4}(k) \}$ for every $1\leq i\leq 5$.
\end{lem}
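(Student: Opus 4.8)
The plan is to combine the counting bijection of \cref{Omnibus_properties}(d) with a direct identification of the unique module in each $\TS(V_4;Q_i)$. First I would record the structural facts: since $V_4$ is a $2$-group and $p=2$, every subgroup $Q_i$ is normal in $V_4$ and, by \cref{lem:Normalisers_and_Quotients_V4}, each quotient $\overline{N}_{V_4}(Q_i)=V_4/Q_i$ is again a $2$-group. A $2$-group has only the identity as a $p'$-element, hence exactly one $p'$-conjugacy class. Since \cref{Omnibus_properties}(d) puts $\TS(V_4;Q_i)$ in bijection with the set of $p'$-conjugacy classes of $\overline{N}_{V_4}(Q_i)$, this already forces $|\TS(V_4;Q_i)|=1$ for every $1\le i\le 5$.

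It then remains to check that the single element of $\TS(V_4;Q_i)$ is precisely $M_i=\Ind_{Q_i}^{V_4}(k)$, i.e.\ that $M_i$ is an indecomposable trivial source module with vertex $Q_i$. That $M_i$ has trivial source is immediate, as by definition it is a summand of $\Ind_{Q_i}^{V_4}(k)$ with $Q_i$ a $2$-subgroup. For indecomposability I would invoke Green's indecomposability theorem \cite[(19.22)]{CurtisReinerMethods1}: as $Q_i\unlhd V_4$ with $V_4/Q_i$ a $2$-group and $k$ is absolutely indecomposable over $kQ_i$, the induced module $\Ind_{Q_i}^{V_4}(k)$ is indecomposable (the extreme cases $i=5$ and $i=1$, giving the trivial module $k$ and the regular module $kV_4$, may also be read off directly).

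To pin down the vertex I would compute Brauer quotients via \cref{Omnibus_properties}(c), according to which the vertex is the maximal $p$-subgroup $Q_j$ with $M_i[Q_j]\neq\{0\}$. Writing $M_i=k[V_4/Q_i]$ as a permutation module and using Broué's description \cite{Bro} of the Brauer construction of a permutation module, one has $M_i[Q_j]\cong k\big[(V_4/Q_i)^{Q_j}\big]$. Because $V_4$ is abelian, a coset $gQ_i$ is fixed by $Q_j$ exactly when $Q_j\le Q_i$; hence $M_i[Q_j]\neq\{0\}$ if and only if $Q_j\le Q_i$, and the maximal such subgroup is $Q_i$ itself (conjugacy causing no ambiguity since all subgroups are normal). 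Thus $Q_i$ is the vertex of $M_i$, so $M_i\in\TS(V_4;Q_i)$, and combining with $|\TS(V_4;Q_i)|=1$ yields $\TS(V_4;Q_i)=\{M_i\}$.

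I do not expect a genuine obstacle here, the statement being essentially a bookkeeping exercise once the bijection of \cref{Omnibus_properties}(d) and Green's theorem are in hand; the only point requiring a little care is the vertex identification, where the fixed-point computation should be carried out for all five subgroups $Q_j$ rather than inferred from a dimension count.
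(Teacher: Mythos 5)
Your proposal is correct and follows essentially the same route as the paper: the counting argument via the bijection of \cref{Omnibus_properties}(d) with the $p'$-classes of the $2$-groups $\overline{N}_{V_4}(Q_i)$, plus Green's indecomposability theorem for the modules $\Ind_{Q_i}^{V_4}(k)$. Your explicit Brauer-quotient computation of the vertices is a slightly more detailed justification of the step the paper dispatches by noting that the $M_i$ have pairwise non-conjugate vertices, but it is not a different method.
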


\begin{proof}
By \cref{lem:Normalisers_and_Quotients_V4}, we have $|[\overline{N}_{V_{4}}(Q_i)]_{2^\prime}|=|\{1\}|=1$ for each $1\leq i \leq 5$. Hence, it follows from  \cref{Omnibus_properties} that
$|\TS(V_{4};Q_i)|=1$ for every $1\leq i\leq 5$. By Green's indecomposability criterion, the $kV_4$-modules $M_i$ are indecomposable trivial source modules for $1\leq i\leq 5$. They are pairwise non-isomorphic as their vertices are not conjugate in $V_4$.
\end{proof}

\begin{prop}\label{Triv_2_V4}
Labelling the ordinary characters of $V_4$ as in \cref{table:ct_C_2_times_C_2}, the trivial source character table $\Triv_2(V_4)$ is as given in \cref{table:TS_ct_C_2_times_C_2_p_is_2}.

\begin{table}[H]
	\centering
	{\scalebox{0.9}{
	\begin{tabular}{@{}l@{}l@{}l@{}l@{}l@{}l@{}l@{}l@{}l@{}l@{}l@{}l@{}l@{}l@{}}
		\(\begin{array}{|lV{4}c|c|c|c|c|}
			\hline
			Q_v\ (1\leq v\leq 5) & \multicolumn{1}{c|}{Q_1\cong C_1} & \multicolumn{1}{c|}{Q_2\cong C_2} & \multicolumn{1}{c|}{Q_3\cong C_2} & \multicolumn{1}{c|}{Q_4\cong C_2} & \multicolumn{1}{c|}{Q_5\cong V_4}\\ \hline
			N_v\ (1\leq v\leq 5) & \multicolumn{1}{c|}{N_1\cong V_4} & \multicolumn{1}{c|}{N_2\cong V_4} & \multicolumn{1}{c|}{N_3\cong V_4} & \multicolumn{1}{c|}{N_4\cong V_4} & \multicolumn{1}{c|}{N_5\cong V_4}\\ \hline
			n_j\ \in\ N_v & 1 & 1 & 1 & 1 & 1\\ \Xhline{4\arrayrulewidth}
			\chi_{1} + \chi_{2}+ \chi_{3}+ \chi_{4} & 4 & 0 & 0 & 0 & 0\\
			\hline
			\chi_{1}+ \chi_{2} & 2 & 2 & 0 & 0 & 0\\
			\hline
			\chi_{1}+ \chi_{3} & 2 & 0 & 2 & 0 & 0\\
			\hline
			\chi_{1}+ \chi_{4} & 2 & 0 & 0 & 2 & 0\\
			\hline
			\chi_{1} & 1 & 1 & 1 & 1 & 1\\
			\hline
		\end{array}\)
	\end{tabular}
}}
	\caption{Trivial source character table of $V_4$ at $p=2$}
	\label{table:TS_ct_C_2_times_C_2_p_is_2}
\end{table}
\end{prop}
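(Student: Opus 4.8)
The plan is to read off both the rows and the entries of $\Triv_2(V_4)$ from the general machinery of \cref{conv:tsctbl} and \cref{rem:tsctbl}, using the explicit list of trivial source modules provided by \cref{PropertiesTS_V4_modules}. Since $V_4$ is abelian, \cref{lem:Normalisers_and_Quotients_V4} gives $N_{V_4}(Q_v)=V_4$ and shows that $\overline{N}_{V_4}(Q_v)$ is a $2$-group for every $1\leq v\leq 5$; hence each $\overline{N}_{V_4}(Q_v)$ has a single $2$-regular class, represented by $s=1$. Consequently there is exactly one column attached to each subgroup $Q_v$, and $\Triv_2(V_4)$ is the $5\times5$ matrix indexed by the modules $M_i=\Ind_{Q_i}^{V_4}(k)$ ($1\leq i\leq 5$) of \cref{PropertiesTS_V4_modules} against the pairs $(Q_v,1)$.

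First I would identify the row labels. By \cref{Lifting_TS_Commutes_With_Ind}(e) the trivial source lift commutes with induction, so $\chi_{\widehat{M_i}}=\Ind_{Q_i}^{V_4}(1_{Q_i})$. Because $V_4$ is abelian, Frobenius reciprocity gives $\langle \Ind_{Q_i}^{V_4}(1_{Q_i}),\chi\rangle=1$ precisely when $Q_i\leq\ker(\chi)$ and $0$ otherwise, whence $\chi_{\widehat{M_i}}=\sum_{Q_i\leq\ker(\chi)}\chi$. Reading off the kernels from \cref{table:ct_C_2_times_C_2} (namely $\ker\chi_1=V_4$, $\ker\chi_2=Q_2$, $\ker\chi_3=Q_3$, $\ker\chi_4=Q_4$) yields the five characters $\chi_1+\chi_2+\chi_3+\chi_4$, $\chi_1+\chi_2$, $\chi_1+\chi_3$, $\chi_1+\chi_4$, and $\chi_1$, i.e.\ exactly the row labels of \cref{table:TS_ct_C_2_times_C_2_p_is_2}.

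Next I would compute the entries. For the columns attached to the cyclic subgroups $Q_1=\{1\}$, $Q_2=\langle a\rangle$, $Q_3=\langle b\rangle$, and $Q_4=\langle ab\rangle$ I would apply \cref{CorollaryRickard}: since every element of $V_4$ is a $2$-element, the entry of $M_i$ in the column $(Q_v,1)$ with $v\in\{1,2,3,4\}$ equals $\tau_{\langle g_v\rangle,1}^{V_4}([M_i])=\chi_{\widehat{M_i}}(g_v)$, where $g_1=1$, $g_2=a$, $g_3=b$, $g_4=ab$. These values are then read directly off the characters found above (for $v=1$ they are the dimensions $[V_4:Q_i]$). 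For the remaining column, attached to $Q_5=V_4$, I would instead use \cref{rem:tsctbl}(c), which forces $\tau_{Q_5,1}^{V_4}([M_i])=0$ unless $Q_5\leq_{V_4}Q_i$, that is unless $i=5$, together with $M_5=\Ind_{V_4}^{V_4}(k)=k$ and \cref{rem:tsctbl}(b), giving the single nonzero value $1$.

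Assembling these numbers reproduces \cref{table:TS_ct_C_2_times_C_2_p_is_2}. There is no genuine obstacle here: the computation is entirely explicit once the column structure is pinned down, the only points requiring care being the justification that each $Q_v$ contributes a single ($s=1$) column (which rests on $\overline{N}_{V_4}(Q_v)$ being a $2$-group) and the bookkeeping of which irreducible characters survive in each induced character. As a consistency check one may verify that the diagonal entries agree with \cref{rem:tsctbl}(a): each $T_{i,i}$ is the value at $1$ of the regular character of the $2$-group $\overline{N}_{V_4}(Q_i)$, namely $|\overline{N}_{V_4}(Q_i)|=[V_4:Q_i]$.
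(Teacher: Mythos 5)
Your proposal is correct and follows essentially the same route as the paper: identify $\chi_{\widehat{M_i}}=\Ind_{Q_i}^{V_4}(1_{Q_i})$ via \cref{Lifting_TS_Commutes_With_Ind}, evaluate the entries with \cref{LemmaRickard}, and settle the $Q_5$-column with \cref{rem:tsctbl}(b)\&(c). The only (harmless) difference is presentational: you make the Frobenius-reciprocity/kernel computation of the induced characters explicit and obtain the off-diagonal zeros in columns $1$--$4$ by direct character evaluation, whereas the paper dispatches all off-diagonal blocks at once via \cref{rem:tsctbl}(c) using the pairwise non-conjugacy of $a$, $b$, $ab$.
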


\begin{proof}
As~$a\not\sim b\not\sim ab\not\sim a$ in $V_4$, it follows from \cref{rem:tsctbl}(c) that $T_{i,v}=\mathbf{0}$ for $1\leq i\neq v\leq 5$. Labelling the ordinary characters as in \cref{table:ct_C_2_times_C_2} and keeping the notation from \cref{PropertiesTS_V4_modules}, it follows from 
\cref{Lifting_TS_Commutes_With_Ind} that
$\chi_{\widehat{M_i}} = \Ind_{Q_i}^{G}(1_{Q_i})$ for $1\leq i\leq 5$. Hence, all remaining entries of $\Triv_2(V_4)$ except for the entry of $T_{5,5}$ are now immediate from
\cref{LemmaRickard}. The entry of~$T_{5,5}$ follows from \cref{rem:tsctbl}(b).
\end{proof}

\indent We mention here that $\Triv_2({V_4})$ is isomorphic to the table of marks of ${V_4}$, since $V_4$ is a $2$-group. See, e.g., {\cite[Proposition 3.2.17]{BBthesis}}. Nonetheless, we use the theory of characters here because it turns out to be more useful for further computations in the sequel.

\subsection{The alternating group
\texorpdfstring{$\AlternatingGroup_4$}{A4}}\label{Notation_A4}
\noindent In this subsection we assume that $G=\AlternatingGroup_4 =\langle a, c \rangle\leq \SymmetricGroup_4$, where $a:=(1,2)(3,4)$ and $c:=(1,2,3)$. Moreover, we let~$b:=~(1,3)(2,4)\in~{\AlternatingGroup_4}$ and define $\omega:=\exp(\frac{2\pi i}{3})
$. Then the ordinary character table of $\AlternatingGroup_{4}$ is as given in \cref{table:Ordinary_ct_A_4}.

\begin{table}[ht]
	\centering
	\begin{tabular}{| c | c c c c |}
		\hline
		& $1$ & $a$ & $c$ & $bc^2$\\
		\hline
		$\chi_1$ & $1$ & $1$ & $1$ & $1$\\
		$\chi_2$ & $1$ & $1$ & $\omega$ & $\omega^2$\\
		$\chi_3$ & $1$ & $1$ & $\omega^2$ & $\omega$\\
		$\chi_4$ & $3$ & $-1$ & $0$ & $0$\\
		\hline
	\end{tabular}
	\caption{Ordinary character table of $\AlternatingGroup_4$}
	\label{table:Ordinary_ct_A_4}
\end{table}

\noindent We set
$$Q_1:=\TrivialGroup,\quad Q_2:=\langle a\rangle,\quad  \textup{and}\quad  Q_3:=\langle a,b\rangle\in \textup{Syl}_2({\AlternatingGroup_4}).$$
\noindent Furthermore, we fix $\mathscr{S}_2(\AlternatingGroup_4) = \{Q_1, Q_2, Q_3\}$. Then, the chain of subgroups $Q_1\leq Q_2\leq Q_3$ is the lattice of subgroups in $\mathscr{S}_2(\AlternatingGroup_4)$.

\begin{lem}
The following assertions hold:
\begin{table}[H]
	\centering
	{\scalebox{1}{
	\begin{tabular}{@{}l@{}l@{}l@{}l@{}l@{}l@{}l@{}l@{}l@{}l@{}l@{}l@{}l@{}l@{}}
		\(\begin{array}{ccc}
   N_{\AlternatingGroup_4}(Q_1) = {\AlternatingGroup_4};\ & N_{\AlternatingGroup_4}(Q_2) = {Q_3};\ & N_{\AlternatingGroup_4}(Q_3) = {\AlternatingGroup_4};\\ [0,5ex]
   \overline{N}_{\AlternatingGroup_4}(Q_1) \cong {\AlternatingGroup_4}; & \overline{N}_{\AlternatingGroup_4}(Q_2) \cong \langle b \rangle \cong C_2; & \overline{N}_{\AlternatingGroup_4}(Q_3) \cong \langle c \rangle \cong C_3.\\
		\end{array}\)
	\end{tabular}
}}
\end{table}
\end{lem}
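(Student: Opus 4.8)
The plan is to read off each normalizer directly from the explicit permutation representation $\AlternatingGroup_4\leq\SymmetricGroup_4$, treating the three subgroups in increasing order of difficulty. The cases $Q_1=\TrivialGroup$ and $Q_3\in\textup{Syl}_2(\AlternatingGroup_4)$ are immediate. The trivial subgroup is normal in any group, so $N_{\AlternatingGroup_4}(Q_1)=\AlternatingGroup_4$ and $\overline{N}_{\AlternatingGroup_4}(Q_1)=\AlternatingGroup_4/\TrivialGroup\cong\AlternatingGroup_4$. For $Q_3=\langle a,b\rangle$ I would note that this is the Klein four-group consisting of the identity together with the three double transpositions of $\SymmetricGroup_4$; since these are precisely the elements of $2$-power order in $\AlternatingGroup_4$, every Sylow $2$-subgroup (of order $4$) must coincide with $Q_3$, so $Q_3$ is the unique, hence normal, Sylow $2$-subgroup. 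This yields $N_{\AlternatingGroup_4}(Q_3)=\AlternatingGroup_4$ and $\overline{N}_{\AlternatingGroup_4}(Q_3)=\AlternatingGroup_4/Q_3\cong C_3$, with the class of $c$ generating the quotient.

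The only case requiring genuine computation is $Q_2=\langle a\rangle$ with $a=(1,2)(3,4)$ of order two. First I would reduce normalizing to centralizing: since $|Q_2|=2$, an element $g\in\AlternatingGroup_4$ satisfies $gag^{-1}\in Q_2=\{1,a\}$ if and only if $gag^{-1}=a$ (the conjugate $gag^{-1}$ has order $2$ and so cannot equal $1$), whence $N_{\AlternatingGroup_4}(Q_2)=C_{\AlternatingGroup_4}(a)$. Next I would identify this centralizer: the Klein four-group $Q_3$ is abelian and contains $a$, so $Q_3\leq C_{\AlternatingGroup_4}(a)$, while no $3$-cycle centralizes $a$ because conjugation by a $3$-cycle permutes the three double transpositions cyclically. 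As $\AlternatingGroup_4\setminus Q_3$ consists solely of $3$-cycles, this forces $C_{\AlternatingGroup_4}(a)=Q_3$, and therefore $N_{\AlternatingGroup_4}(Q_2)=Q_3$. Finally $\overline{N}_{\AlternatingGroup_4}(Q_2)=N_{\AlternatingGroup_4}(Q_2)/Q_2=Q_3/\langle a\rangle\cong C_2$, with the image of $b$ as a generator.

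There is essentially no obstacle here, the statement being a finite verification of the same flavour as the preceding $V_4$-lemma (justified there simply by ``direct computation''). The one point meriting a moment's care is the reduction of ``normalizes'' to ``centralizes'' for $Q_2$, which hinges on $Q_2$ having prime order two so that its unique nontrivial element must be fixed; this is exactly what makes the normalizer expand to the full Klein four-group rather than staying equal to $Q_2$. A purely mechanical alternative would be to let \GAP{} or \MAGMA{} compute the three normalizers and quotients outright.
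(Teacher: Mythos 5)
Your proof is correct and is exactly the ``direct computation'' that the paper invokes without spelling out: the identification of $Q_3$ as the unique Sylow $2$-subgroup, the reduction of $N_{\AlternatingGroup_4}(Q_2)$ to $C_{\AlternatingGroup_4}(a)=Q_3$ via the prime-order argument, and the resulting quotients all check out. No discrepancy with the paper's approach.
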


\begin{proof}
    Direct computation.
\end{proof}

\begin{nota}
For the $p^\prime$-conjugacy classes of the normaliser quotients we make the following choices:
    $$[\overline{N}_{\AlternatingGroup_{4}}(Q_1)]_{p^\prime}=\{1Q_1, cQ_1, bc^2Q_1\},\ [\overline{N}_{\AlternatingGroup_{4}}(Q_2)]_{p^\prime}=\{1Q_2\},\ \textup{and}\ [\overline{N}_{\AlternatingGroup_{4}}(Q_3)]_{p^\prime}=\{1Q_3,cQ_3,bc^2Q_3\}.$$
    Moreover, we denote the three one-dimensional simple~$k\AlternatingGroup_{4}$-modules by $k, S_{\omega}$, and $S_{\overline{\omega}}$ with $\varphi_{S_\omega} = \chi_2^\circ$ and $\varphi_{S_{\overline{\omega}}} = \chi_3^\circ$. 
\end{nota}

\indent The classification of the indecomposable modules for the alternating group~$\AlternatingGroup_{4}$ in characteristic~$2$ is well-known. See, e.g., the remark after Theorem 4.3.3 in \cite{Benson1}. Every such module is (e.g. by \cref{LemmaAllModulesAreInducedFromSylow}) a summand of a module induced from the group $Q_3\cong V_{4}$ up to the group~${\AlternatingGroup_4}$ and, in particular, the $k\AlternatingGroup_{4}$-module $M:=\Ind_{Q_2}^{\AlternatingGroup_{4}}(k)$ is indecomposable.

\begin{prop}\label{PropertiesTS_A4_modules}
	The following assertions about the trivial source $k\AlternatingGroup_4$-modules hold:
	\begin{enumerate}[label=\textup{(\alph*)}]
		\item $\TS(\AlternatingGroup_{4};Q_1) = \{ P(k), P(S_{\omega}), P(S_{\overline{\omega}})\}$;
		\item $\TS(\AlternatingGroup_{4};Q_2) = \{M\}$;
		\item $\TS(\AlternatingGroup_{4};Q_3)=\{k, S_{\omega}, S_{\overline{\omega}}\}$.
	\end{enumerate}
\end{prop}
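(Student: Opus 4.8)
The plan is to classify the trivial source modules vertex by vertex, using the structural results from \cref{Omnibus_properties} to count them and the explicit module theory of $k\AlternatingGroup_4$ to identify them. The key counting tool is \cref{Omnibus_properties}(d), which gives a bijection between $\TS(\AlternatingGroup_4;Q_i)$ and the $p'$-conjugacy classes of $\overline{N}_{\AlternatingGroup_4}(Q_i)$. From the preceding lemma we read off $\overline{N}_{\AlternatingGroup_4}(Q_1)\cong\AlternatingGroup_4$, $\overline{N}_{\AlternatingGroup_4}(Q_2)\cong C_2$, and $\overline{N}_{\AlternatingGroup_4}(Q_3)\cong C_3$. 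Since $p=2$, the numbers of $2'$-classes are $3$, $1$, and $3$ respectively (for $\AlternatingGroup_4$ the $2$-regular classes are those of $1$, $c$, $bc^2$; for $C_2$ only the identity; for $C_3$ all three elements). Hence $|\TS(\AlternatingGroup_4;Q_1)|=3$, $|\TS(\AlternatingGroup_4;Q_2)|=1$, and $|\TS(\AlternatingGroup_4;Q_3)|=3$. This immediately pins down the \emph{cardinalities}, so the remaining task is to exhibit concrete modules filling each set.

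For part (a), the vertex-$Q_1$ trivial source modules are exactly the PIMs of $k\AlternatingGroup_4$, as noted in the remark following \cref{def:TSCT}. Since $k\AlternatingGroup_4$ has three simple modules $k$, $S_\omega$, $S_{\overline\omega}$ (the three linear characters reduce mod $2$ to distinct Brauer characters, as the commutator quotient is $C_3$), there are exactly three PIMs $P(k)$, $P(S_\omega)$, $P(S_{\overline\omega})$, matching the count. For part (b), the single module of vertex $Q_2$ must be identified; I would argue that $M=\Ind_{Q_2}^{\AlternatingGroup_4}(k)$ is an indecomposable trivial source module — indecomposability is already asserted in the paragraph preceding the proposition, and it is a trivial source module by definition as a summand (indeed the whole) of $\Ind_{Q_2}^{\AlternatingGroup_4}(k)$. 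Its vertex is $Q_2$: the vertex is contained in $Q_2$ since $M$ is induced from $Q_2$, and it cannot be trivial since $M$ is not projective (its dimension $|\AlternatingGroup_4:Q_2|=6$ is not divisible by $|Q_3|=4$, or alternatively one checks $M[Q_2]\neq 0$). As $|\TS(\AlternatingGroup_4;Q_2)|=1$, this $M$ is the unique such module.

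For part (c), the vertex-$Q_3$ trivial source modules are those whose vertex is a full Sylow $2$-subgroup; these have dimension prime to $2$ and include all the simple modules lying in blocks of full defect. The three one-dimensional simples $k$, $S_\omega$, $S_{\overline\omega}$ are trivial source modules with vertex $Q_3$: each is a $p$-permutation module (being one-dimensional, restriction to any $2$-subgroup is a permutation module), each is indecomposable, and each has vertex $Q_3$ because its Brauer quotient at $Q_3$ is nonzero — equivalently, a one-dimensional $k\AlternatingGroup_4$-module has the Sylow $2$-subgroup as vertex since it is not projective (as $|\AlternatingGroup_4|$ is even) yet has trivial source. Since these three are pairwise non-isomorphic and $|\TS(\AlternatingGroup_4;Q_3)|=3$, they exhaust the set. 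I expect the main obstacle to be the clean verification that the vertices are exactly as claimed — in particular confirming that each of $k$, $S_\omega$, $S_{\overline\omega}$ genuinely has vertex $Q_3$ (and not a proper subgroup) via the Brauer construction $M[Q_3]\neq 0$, and that $M$ has vertex precisely $Q_2$ rather than $Q_3$; once the vertices are nailed down, the counting argument closes everything off automatically.
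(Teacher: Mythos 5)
Your proposal is correct and follows essentially the same route as the paper: both arguments rest on the bijection of \cref{Omnibus_properties}(d) to get the counts $3,1,3$ from the $2'$-classes of the normaliser quotients, identify the vertex-$Q_1$ modules as the three PIMs, and use the indecomposability of $M=\Ind_{Q_2}^{\AlternatingGroup_4}(k)$ together with the count for part (b). The only cosmetic difference is in part (c), where the paper deduces that $\Ind_{Q_3}^{\AlternatingGroup_4}(k)$ splits into three non-isomorphic one-dimensional summands because all three vertex-$Q_3$ modules must sit inside this $3$-dimensional module, whereas you verify directly that each one-dimensional simple is a $p$-permutation module with vertex $Q_3$; both are valid and of comparable length.
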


\begin{proof}
\noindent Since $|[\overline{N}_{\AlternatingGroup_{4}}(Q_1)]_{p^\prime}|=3$, $|[\overline{N}_{\AlternatingGroup_{4}}(Q_2)]_{p^\prime}|=1$, and $|[\overline{N}_{\AlternatingGroup_{4}}(Q_3)]_{p^\prime}|=3$, it follows from  \cref{Omnibus_properties} that
$$|\TS(\AlternatingGroup_{4};Q_1)|=3=|\TS(\AlternatingGroup_{4};Q_3)|\ \textup{and}\ |\TS(\AlternatingGroup_{4};Q_2)|=1.$$
\noindent The~$k{\AlternatingGroup_{4}}$-modules $P(k), P(S_{\omega})$, and~$P(S_{\overline{\omega}})$ have trivial vertices and are trivial source modules, since they are projective and indecomposable. This proves (a). Every trivial source $k{\AlternatingGroup_{4}}$-module which has $Q_3$ as a vertex is isomorphic to a direct summand of $\Ind_{Q_3}^{\AlternatingGroup_{4}}(k)$. As $|\TS({\AlternatingGroup_{4}};Q_3)|=3$ and~$[\AlternatingGroup_{4}:Q_3]=3$, it follows that the $k\AlternatingGroup_4$-module $\Ind_{Q_3}^{\AlternatingGroup_{4}}(k)$ decomposes into a direct sum of three non-isomorphic~$1$-dimensional~$k{\AlternatingGroup_{4}}$-modules. Therefore,~$\Ind_{Q_3}^{\AlternatingGroup_{4}}(k)\cong k\oplus S_{\omega}\oplus S_{\overline{\omega}}$ and all simple $k{\AlternatingGroup_{4}}$-modules are trivial source modules with~$Q_3$ as a vertex. This proves (c). The~$k{\AlternatingGroup_{4}}$-module~$M$ is indecomposable and by \cref{Omnibus_properties} it is a trivial source module. The claim in~(b) follows, as~$|\TS(\AlternatingGroup_{4};Q_2)|=1$.
\end{proof}

\begin{prop}\label{Triv_2_A4}
Labelling the ordinary characters of $\AlternatingGroup_4$ as in \cref{table:Ordinary_ct_A_4}, the trivial source character table $\Triv_2(\AlternatingGroup_4)$ is as given in \cref{table:TS_ct_A4_p_is_2}.
\begin{table}[ht]
	\centering
	{\scalebox{0.8}{
			\begin{tabular}{@{}l@{}l@{}l@{}l@{}l@{}l@{}l@{}l@{}l@{}l@{}}
				\(\begin{array}{|lV{4}ccc|c|ccc|}
					\hline
					Q_v\ (1\leq v\leq 3) & \multicolumn{3}{c|}{Q_{1}\cong C_1} & \multicolumn{1}{c|}{Q_{2}\cong C_2} & \multicolumn{3}{c|}{Q_{3}\cong V_4}\\ \hline
					N_v\ (1\leq v\leq 3) & \multicolumn{3}{c|}{N_{1}\cong \AlternatingGroup_4} & \multicolumn{1}{c|}{N_{2}\cong V_4} & \multicolumn{3}{c|}{N_{3}\cong \AlternatingGroup_4}\\ \hline
					n_j\ \in\ N_v & 1 & c & bc^2 & 1 & 1 & c & bc^2\\
					\Xhline{4\arrayrulewidth}
					\chi_{1} + \chi_{4} & 4 & 1 & 1 & 0 & 0 & 0 & 0\\
					\chi_{2} + \chi_{4} & 4 & \omega & \omega^{2} & 0 & 0 & 0 & 0\\
					\chi_{3} + \chi_{4} & 4 & \omega^{2} & \omega & 0 & 0 & 0 & 0\\
					\hline
					\chi_{1}+ \chi_{2}+ \chi_{3}+ \chi_{4} & 6 & 0 & 0 & 2 & 0 & 0 & 0\\
					\hline
					\chi_{1} & 1 & 1 & 1 & 1 & 1 & 1 & 1\\
					\chi_{2} & 1 & \omega & \omega^{2} & 1 & 1 & \omega & \omega^{2}\\
					\chi_{3} & 1 & \omega^{2} & \omega & 1 & 1 & \omega^{2} & \omega\\
					\hline
				\end{array}\)
			\end{tabular}
	}}
	\caption{Trivial source character table of $\AlternatingGroup_4$ at $p=2$}
	\label{table:TS_ct_A4_p_is_2}
\end{table}	
\end{prop}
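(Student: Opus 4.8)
The plan is to fill in the block matrix $\Triv_2(\AlternatingGroup_4)=[T_{i,v}]_{1\leq i,v\leq 3}$ one block-column at a time, using the chain of $2$-subgroups $Q_1\leq Q_2\leq Q_3$ together with the trivial source modules already identified in \cref{PropertiesTS_A4_modules}. First I would record the vanishing blocks: since the $2$-subgroups of $\AlternatingGroup_4$ form a chain, \cref{rem:tsctbl}(c) forces $T_{1,2}=T_{1,3}=T_{2,3}=\mathbf 0$, because $Q_2\not\leq_{\AlternatingGroup_4}Q_1$ and $Q_3\not\leq_{\AlternatingGroup_4}Q_1,Q_2$. This accounts for all the zero entries in the upper-right part of the table.

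Next I would determine the characters $\chi_{\widehat M}$ labelling the rows. For the PIMs in $\TS(\AlternatingGroup_4;Q_1)$ one uses the (well-known) $2$-decomposition matrix of $k\AlternatingGroup_4$: since $V_4\unlhd\AlternatingGroup_4$ with $\AlternatingGroup_4/V_4\cong C_3$ a $2'$-group, the simple modules $k,S_\omega,S_{\overline{\omega}}$ are one-dimensional and inflated from $C_3$, each PIM has dimension $|V_4|=4$, and $\chi_4^\circ=\varphi_k+\varphi_{S_\omega}+\varphi_{S_{\overline{\omega}}}$; hence $\Phi_{\varphi_k}=\chi_1+\chi_4$, $\Phi_{\varphi_{S_\omega}}=\chi_2+\chi_4$, and $\Phi_{\varphi_{S_{\overline{\omega}}}}=\chi_3+\chi_4$, each of degree $4$. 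For the vertex-$Q_2$ module, \cref{Lifting_TS_Commutes_With_Ind}(e) gives $\chi_{\widehat M}=\Ind_{Q_2}^{\AlternatingGroup_4}(1_{Q_2})$, and Frobenius reciprocity against \cref{table:Ordinary_ct_A_4} yields $\chi_{\widehat M}=\chi_1+\chi_2+\chi_3+\chi_4$, of degree $6$. Finally, each one-dimensional simple module lifts to the corresponding linear character, so $\chi_{\widehat k}=\chi_1$, $\chi_{\widehat{S_\omega}}=\chi_2$, and $\chi_{\widehat{S_{\overline{\omega}}}}=\chi_3$.

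With the row characters in hand, the block-columns for $Q_1$ and $Q_2$ are immediate from \cref{LemmaRickard}. For $v=1$ the entry in a given row is $\tau_{Q_1,s}^{\AlternatingGroup_4}([M])=\chi_{\widehat M}(s)$ for $s\in\{1,c,bc^2\}$, so the block $T_{*,1}$ is obtained simply by reading off the row characters on the $2'$-classes of $\AlternatingGroup_4$. For $v=2$, since $a$ is a $2$-element with $\langle a\rangle=Q_2$ and trivial $2'$-part, \cref{LemmaRickard} gives $\tau_{Q_2,1}^{\AlternatingGroup_4}([M])=\chi_{\widehat M}(a)$; evaluating the row characters at $a$ produces $T_{2,2}=[\,2\,]$ and the three entries of $T_{3,2}$ all equal to $1$ (while $T_{1,2}=\mathbf 0$).

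The remaining work, and the one genuinely non-formal step, is the block $T_{3,3}$ indexed by the non-cyclic Sylow vertex $Q_3=V_4$, where \cref{LemmaRickard} no longer applies. Here I would invoke \cref{rem:tsctbl}(a): $T_{3,3}$ is the matrix of projective indecomposable characters of $\overline{N}_3\cong C_3$ on its $2'$-classes $\{1,c,bc^2\}$, and since $kC_3$ is semisimple these are exactly the three linear characters of $C_3$. It then remains to match each simple $k\AlternatingGroup_4$-module to the correct linear character through the bijection $M\mapsto M[Q_3]$ of \cref{Omnibus_properties}(d). The key observation is that $Q_3=V_4$ is normal in $\AlternatingGroup_4$ and acts trivially on each one-dimensional module, so every relative-trace term $\tr_R^{Q_3}$ with $R<Q_3$ scales by the even index $[Q_3:R]\in\{2,4\}$ and hence vanishes in characteristic $2$; therefore $M[Q_3]\cong M$ as a $k\overline{N}_3=k(\AlternatingGroup_4/V_4)$-module and $\varphi_{M[Q_3]}(gQ_3)=\varphi_M(g)$. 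Thus $k[Q_3]$ is the trivial $C_3$-module with values $1,1,1$, while $S_\omega[Q_3]$ and $S_{\overline{\omega}}[Q_3]$ carry the Brauer characters $1,\omega,\omega^2$ and $1,\omega^2,\omega$, respectively, matching the stated entries. I expect this identification of the Brauer quotients over $C_3$ — rather than any numerical computation — to be the main conceptual obstacle, the rest being bookkeeping against \cref{table:Ordinary_ct_A_4}.
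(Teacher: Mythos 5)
Your proof is correct, and it is genuinely more self-contained than the paper's: the paper disposes of \cref{Triv_2_A4} by citing \cite[Theorem 5.6]{BBCLNF} (via $\operatorname{PSL}_2(3)\cong\AlternatingGroup_4$) and \cite[Proposition 4.1.5]{BBthesis}, whereas you reprove the table from scratch. Your method is essentially the one the paper itself uses for $\Triv_2(V_4)$ in \cref{Triv_2_V4} — kill the off-diagonal blocks with \cref{rem:tsctbl}(c), get the row characters from the decomposition matrix and from $\Ind_{Q_2}^{\AlternatingGroup_4}(1_{Q_2})$ via \cref{Lifting_TS_Commutes_With_Ind}(e), and read off the $Q_1$- and $Q_2$-columns with \cref{LemmaRickard} — extended by the one step that does not arise for $V_4$ as a $2$-group in the same way, namely the block $T_{3,3}$ at the non-cyclic vertex. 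Your treatment of that block is sound: \cref{rem:tsctbl}(a) identifies $T_{3,3}$ with the projective indecomposable characters of $\overline{N}_3\cong C_3$, and since $Q_3=V_4$ is normal and acts trivially on each one-dimensional module, every relative trace $\tr_R^{Q_3}$ with $R<Q_3$ is multiplication by the even integer $[Q_3:R]$ and vanishes in characteristic $2$, so $M[Q_3]\cong M$ as a $k[\AlternatingGroup_4/V_4]$-module and the three simples land on the three linear characters of $C_3$ exactly as in the table. All intermediate facts you use (the decomposition matrix forced by $\AlternatingGroup_4/V_4\cong C_3$, the Frobenius reciprocity computation giving $\chi_1+\chi_2+\chi_3+\chi_4$, the uniqueness of the linear lifts) check out against \cref{table:Ordinary_ct_A_4} and \cref{PropertiesTS_A4_modules}. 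What your route buys is a proof readable inside this paper without the external references; what the citation buys is brevity and consistency with the $\operatorname{PSL}_2(q)$ series of \cite{BBCLNF}.
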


\begin{proof}
See \cite[Theorem 5.6]{BBCLNF} (via the isomorphism $\operatorname{PSL}_2(3)\cong \AlternatingGroup_4$) or \cite[Proposition 4.1.5]{BBthesis}.
\end{proof}

\subsection{The alternating group \texorpdfstring{$\AlternatingGroup_5$}{A5}}
\noindent In this subsection we assume that $G=\AlternatingGroup_5=\langle a,d\rangle\leq \SymmetricGroup_5$, where $a:=(1,2)(3,4)$ and $d:=(1,3,5)$. Moreover, we set~$b:=(1,3)(2,4)\in {\AlternatingGroup_5}$ and $c:=(1,2,3)\in {\AlternatingGroup_5}$. Defining $A:=\frac{1-\sqrt{5}}{2}\ \textup{and}\ ^{*}A:=\frac{1+\sqrt{5}}{2}$, the ordinary character table of ${\AlternatingGroup_{5}}$ is as given in \cref{table:Ordinary_ct_A_5}.\label{Notation_A5}
\begin{table}[H]
	\setlength\extrarowheight{1.5pt}
	\centering
		\begin{tabular}{|r|ccccc|}
			\hline
			& $\textup{1}$ & $a$ & $d$ & $ad$ & ${(ad)}^2$\\
			\hline
			$\chi_1$ &	$1$ & $1$ & $1$ & $1$ & $1$\\
			$\chi_2$ &	$3$ & $-1$ & $0$ & $A$ & $^{*}A$\\
			$\chi_3$ &	$3$ & $-1$ & $0$ & $^{*}A$ & $A$\\
			$\chi_4$ &	$4$ & $0$ & $1$ & $-1$ & $-1$\\
			$\chi_5$ &	$5$ & $1$ & $-1$ & $0$ & $0$\\
			\hline
		\end{tabular}
		\caption{Ordinary character table of $\AlternatingGroup_5$}
		\label{table:Ordinary_ct_A_5}

\end{table}
\noindent We set
$$Q_1:=\TrivialGroup,\ Q_2:=\langle a\rangle,\ \textup{and}\ Q_3:=\langle a,b\rangle\in \textup{Syl}_2({\AlternatingGroup_{5}}).$$
\noindent Furthermore, we set $\omega:=\exp(\frac{2\pi i}{3})
$, $\eta:=\exp(\frac{2\pi i}{5})
$, and choose~$\mathscr{S}_2(\AlternatingGroup_5) = \{Q_1, Q_2, Q_3\}$. Then, the chain of subgroups $Q_1\leq Q_2\leq Q_3$ is the lattice of subgroups in $\mathscr{S}_2(\AlternatingGroup_5)$.

\begin{lem}
The following assertions hold:
\begin{table}[H]
	\centering
	{\scalebox{1}{
	\begin{tabular}{@{}l@{}l@{}l@{}l@{}l@{}l@{}l@{}l@{}l@{}l@{}l@{}l@{}l@{}l@{}}
		\(\begin{array}{ccc}
   N_{\AlternatingGroup_5}(Q_1) = {\AlternatingGroup_5};\ & N_{\AlternatingGroup_5}(Q_2) = {Q_3};\ & N_{\AlternatingGroup_5}(Q_3) = {\langle a, c \rangle \cong \AlternatingGroup_4};\\ [0,5ex]
   \overline{N}_{\AlternatingGroup_5}(Q_1) \cong {\AlternatingGroup_5}; & \overline{N}_{\AlternatingGroup_5}(Q_2) \cong \langle b \rangle \cong C_2; & \overline{N}_{\AlternatingGroup_5}(Q_3) \cong \langle c \rangle \cong C_3.\\
		\end{array}\)
	\end{tabular}
}}
\end{table}

\end{lem}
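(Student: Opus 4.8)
The plan is to treat the three subgroups $Q_1,Q_2,Q_3$ separately and then read off the normaliser quotients. The case $Q_1=\TrivialGroup$ is immediate: every group normalises its trivial subgroup, so $N_{\AlternatingGroup_5}(Q_1)=\AlternatingGroup_5$ and $\overline{N}_{\AlternatingGroup_5}(Q_1)=\AlternatingGroup_5/\TrivialGroup\cong\AlternatingGroup_5$.

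For $Q_2=\langle a\rangle$ I would exploit that $Q_2$ has order $2$: since conjugation preserves cycle type, the non-trivial element $a$ cannot be sent to $1$, whence $N_{\AlternatingGroup_5}(Q_2)=C_{\AlternatingGroup_5}(a)$. To compute this centraliser, first work in $\SymmetricGroup_5$: the element $a=(1,2)(3,4)$ has cycle type $2+2+1$, whose $\SymmetricGroup_5$-conjugacy class has $15$ elements, so $|C_{\SymmetricGroup_5}(a)|=120/15=8$, realised as the dihedral group $\langle (1,2),(3,4),(1,3)(2,4)\rangle$ stabilising the partition $\{\{1,2\},\{3,4\}\}$ and fixing $5$. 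Its even elements are exactly $\{1,a,b,ab\}=Q_3$, so $C_{\AlternatingGroup_5}(a)=Q_3$ and hence $N_{\AlternatingGroup_5}(Q_2)=Q_3$. Consequently $\overline{N}_{\AlternatingGroup_5}(Q_2)=Q_3/Q_2$ has order $2$ and is generated by the image of $b$, giving $\overline{N}_{\AlternatingGroup_5}(Q_2)\cong\langle b\rangle\cong C_2$.

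The main case, and the step I expect to require the most care, is $Q_3=\langle a,b\rangle$, a Sylow $2$-subgroup of $\AlternatingGroup_5$. I would first place $\langle a,c\rangle$ inside the normaliser. A short computation ($cac^{-1}=(1,4)(2,3)=ab$) shows $b\in\langle a,c\rangle$, so $\langle a,c\rangle$ consists of even permutations of $\{1,2,3,4\}$ fixing $5$ and equals the alternating group $\AlternatingGroup_4$ on $\{1,2,3,4\}$, of order $12$; since $Q_3$ is precisely the normal Klein four-subgroup $V_4\unlhd\AlternatingGroup_4$, this $\AlternatingGroup_4$ normalises $Q_3$. It then remains to establish the reverse containment by an order count. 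To pin down $|N_{\AlternatingGroup_5}(Q_3)|$ I would argue $C_{\AlternatingGroup_5}(Q_3)\leq C_{\AlternatingGroup_5}(a)=Q_3$, while $Q_3$ is abelian, so $C_{\AlternatingGroup_5}(Q_3)=Q_3$; conjugation on the three involutions of $Q_3$ then yields an embedding $N_{\AlternatingGroup_5}(Q_3)/Q_3\hookrightarrow\Aut(V_4)\cong\SymmetricGroup_3$. Hence $|N_{\AlternatingGroup_5}(Q_3)|$ divides $24$ and is divisible by $|\langle a,c\rangle|=12$, so it lies in $\{12,24\}$; as it must also divide $60$ by Lagrange, the only possibility is $12$. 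Therefore $N_{\AlternatingGroup_5}(Q_3)=\langle a,c\rangle\cong\AlternatingGroup_4$.

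The quotient then follows at once: $\overline{N}_{\AlternatingGroup_5}(Q_3)=\AlternatingGroup_4/V_4\cong C_3$, generated by the image of the $3$-cycle $c$, so $\overline{N}_{\AlternatingGroup_5}(Q_3)\cong\langle c\rangle\cong C_3$. All of these equalities can of course be confirmed by a direct computation in $\AlternatingGroup_5$, exactly as in the two preceding lemmas; the argument above merely records the structural reasons behind them, the only genuinely non-trivial point being the order count for $N_{\AlternatingGroup_5}(Q_3)$.
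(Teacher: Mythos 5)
Your argument is correct, and it simply carries out in full the verification that the paper dispatches with the single phrase ``Direct computation'': the identification $N_{\AlternatingGroup_5}(Q_2)=C_{\AlternatingGroup_5}(a)=Q_3$ via the centraliser count in $\SymmetricGroup_5$, and the order count $N_{\AlternatingGroup_5}(Q_3)=\langle a,c\rangle\cong\AlternatingGroup_4$ using $C_{\AlternatingGroup_5}(Q_3)=Q_3$, the embedding into $\Aut(V_4)\cong\SymmetricGroup_3$, and Lagrange, are all sound. So the proposal is correct and takes essentially the same (computational) route as the paper, just with the details written out.
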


\begin{proof}
    Direct computation.
\end{proof}

\begin{nota}\label{nota:A5}
For the $p^\prime$-conjugacy classes of the normaliser quotients we make the following choices:
$$[\overline{N}_{\AlternatingGroup_{5}}(Q_1)]_{p^\prime} = \{1, d, ad, {ad}^2\},\ [\overline{N}_{\AlternatingGroup_{5}}(Q_2)]_{p^\prime} = \{1Q_2\},\ \textup{and}\ [\overline{N}_{\AlternatingGroup_{5}}(Q_3)]_{p^\prime} = \{1Q_3,cQ_3,c^2Q_3\}.$$
As $|[\AlternatingGroup_{5}]_{p^\prime}|=4$, it follows from  \cref{Omnibus_properties} that, up to isomorphism, there exist exactly four simple~$k{\AlternatingGroup_{5}}$-modules which we denote by $S_1 := k$, $S_2$, $S_3$, and~$S_4$. Here,
$$\chi_1^\circ = \varphi_k,\ \chi_2^\circ = \varphi_k + \varphi_{S_3},\ \chi_3^\circ = \varphi_k + \varphi_{S_2},\ \textup{and}\ \chi_4^\circ = \varphi_k + \varphi_{S_2} + \varphi_{S_3}.$$ Moreover, we set
$$H:=\langle (1,2)(3,4), (1,4)(2,5) \rangle \leq {\AlternatingGroup_{5}}\ \textup{and}\ M_5:={\Ind}_{H}^{{\AlternatingGroup_{5}}}(k).$$
Note that $H\cong D_{10}$, the dihedral group of order $10$. 
\end{nota}

\begin{rem}
\noindent We identify $N_{\AlternatingGroup_{5}}(Q_3)$ with the group $\AlternatingGroup_4$ from the previous subsection. Note that these two permutation groups are verbatim the same subgroup of~$\SymmetricGroup_4$. 
\end{rem}

\begin{prop}\label{PropertiesTS_A5_modules}
	\begin{enumerate}[label=\textup{(\alph*)}]
		\item We have $\TS({\AlternatingGroup_{5}};Q_1) = \{ P(k), P(S_2), P(S_3), P(S_4)\}$.
		\item We have $\TS({\AlternatingGroup_{5}};Q_2) = \{M_5\}$ and $\chi_{\widehat{M_5}} = \chi_1 + \chi_5$.
		\item Let the two $5$-dimensional indecomposable summands of ${\Ind}_{Q_{3}}^{\AlternatingGroup_5}(k)$ be denoted by $M_7$ and~$M_8$, respectively. Then we have $\TS({\AlternatingGroup_{5}};Q_3)=\{S_1, M_7, M_8\}$, $\chi_{\widehat{M_7}} = \chi_{\widehat{M_8}} = \chi_5$, and $\chi_{\widehat{S_1}} = 1_{\AlternatingGroup_5}$.
	\end{enumerate}
\end{prop}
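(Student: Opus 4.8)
The plan is to handle the three vertices $Q_1,Q_2,Q_3$ one at a time. In each case \cref{Omnibus_properties}(d) fixes the number of isomorphism classes to be sought, namely $|\TS(\AlternatingGroup_5;Q_i)|=|[\overline{N}_{\AlternatingGroup_5}(Q_i)]_{2'}|$, which by \cref{nota:A5} equals $4$, $1$ and $3$ for $i=1,2,3$. For (a) there is nothing further to compute: $\TS(\AlternatingGroup_5;Q_1)=\TS(\AlternatingGroup_5;\TrivialGroup)$ is exactly the set of PIMs of $k\AlternatingGroup_5$, and as there are four simple modules $S_1=k,S_2,S_3,S_4$ there are the four PIMs $P(k),P(S_2),P(S_3),P(S_4)$, giving the claim.

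For (b) I would show that the single module in $\TS(\AlternatingGroup_5;Q_2)$ is $\widetilde{M}=\Ind_H^{\AlternatingGroup_5}(k)$, by proving that $\widetilde{M}$ is indecomposable with vertex $Q_2$. Since $[H:Q_2]=5$ is odd, $k_H\mid\Ind_{Q_2}^H(k)$, hence $\widetilde{M}\mid\Ind_{Q_2}^{\AlternatingGroup_5}(k)$ and $\widetilde{M}$ is a $2$-permutation module all of whose indecomposable summands have vertex inside $Q_2\cong C_2$. By \cref{Lifting_TS_Commutes_With_Ind}(e) its character is the degree-$6$ permutation character $\Ind_H^{\AlternatingGroup_5}(1_H)$, which a direct computation identifies as $\chi_1+\chi_5$. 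Since $\chi_1+\chi_5$ contains none of the projective indecomposable characters $\chi_1+\chi_2+\chi_3+\chi_5$, $\chi_2+\chi_5$, $\chi_3+\chi_5$, $\chi_4$ of $k\AlternatingGroup_5$, the module $\widetilde{M}$ has no projective summand; thus every summand has vertex exactly $Q_2$ and so, by the count above, is isomorphic to one fixed module $N$. Writing $\widetilde{M}\cong N^{\oplus m}$, the identity $\dim_k\End_{k\AlternatingGroup_5}(\widetilde{M})=|H\backslash\AlternatingGroup_5/H|=\langle\chi_1+\chi_5,\chi_1+\chi_5\rangle=2$ (the number of double cosets, valid over any field) forces $m^2\le 2$, i.e.\ $m=1$. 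Hence $\widetilde{M}$ is the unique element of $\TS(\AlternatingGroup_5;Q_2)$.

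For (c), $Q_3$ is a Sylow $2$-subgroup and $N:=N_{\AlternatingGroup_5}(Q_3)=\langle a,c\rangle\cong\AlternatingGroup_4$, so the Green correspondence of \cref{Omnibus_properties}(d) gives a bijection $\TS(\AlternatingGroup_5;Q_3)\longleftrightarrow\TS(\AlternatingGroup_4;Q_3)=\{k,S_\omega,S_{\overline{\omega}}\}$ (the latter by \cref{PropertiesTS_A4_modules}(c)), under which $k=S_1$ corresponds to $k_{\AlternatingGroup_4}$. For the remaining two I would put $M_7:=\Ind_N^{\AlternatingGroup_5}(S_\omega)$ and $M_8:=\Ind_N^{\AlternatingGroup_5}(S_{\overline{\omega}})$. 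By \cref{Lifting_TS_Commutes_With_Ind}(e) their characters are $\Ind_N^{\AlternatingGroup_5}(\chi_2^{\AlternatingGroup_4})$ and $\Ind_N^{\AlternatingGroup_5}(\chi_3^{\AlternatingGroup_4})$, and a short Frobenius-reciprocity computation — using $\Res_N^{\AlternatingGroup_5}(\chi_5)=\chi_2^{\AlternatingGroup_4}+\chi_3^{\AlternatingGroup_4}+\chi_4^{\AlternatingGroup_4}$ — shows both equal the irreducible $\chi_5$ of degree $5$. As an irreducible character is not a sum of two effective characters of positive degree, $M_7$ and $M_8$ are indecomposable of dimension $5$; being $2$-permutation modules with vertex inside $Q_3$, non-projective (their character $\chi_5$ is not a sum of projective indecomposable characters) and of dimension $5\ne 6=\dim\widetilde{M}$, neither can have vertex $Q_2$ (by the uniqueness in (b)), so both have vertex $Q_3$. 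Their Green correspondents are $S_\omega$ and $S_{\overline{\omega}}$ (each is the unique vertex-$Q_3$ summand of the relevant restriction), so $M_7,M_8$ are non-isomorphic and, together with $k$, exhaust $\TS(\AlternatingGroup_5;Q_3)$. Finally, being trivial source modules of full vertex $Q_3$, they are summands of $\Ind_{Q_3}^{\AlternatingGroup_5}(k)$; since $\chi_{\reallywidehat{\Ind_{Q_3}^{\AlternatingGroup_5}(k)}}=\chi_1+\chi_4+2\chi_5$ has degree $15=1+5+5+4$, subtracting $\chi_1+2\chi_5$ leaves $\chi_4$, whence $\Ind_{Q_3}^{\AlternatingGroup_5}(k)\cong k\oplus M_7\oplus M_8\oplus S_4$ and $M_7,M_8$ are precisely its two $5$-dimensional indecomposable summands.

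The main obstacle is the bookkeeping in part (c): one has to produce the two non-trivial vertex-$Q_3$ modules, control their vertices, and identify them with the $5$-dimensional summands of $\Ind_{Q_3}^{\AlternatingGroup_5}(k)$, all of which hinges on the single character identity $\Ind_{\AlternatingGroup_4}^{\AlternatingGroup_5}(\chi_2^{\AlternatingGroup_4})=\chi_5=\Ind_{\AlternatingGroup_4}^{\AlternatingGroup_5}(\chi_3^{\AlternatingGroup_4})$. Once this is established from the restriction of $\chi_5$ to $\AlternatingGroup_4$, the indecomposability, the dimension $5$, the distinctness of $M_7,M_8$, and the final decomposition of $\Ind_{Q_3}^{\AlternatingGroup_5}(k)$ all follow from degree counts and the scalar-product formula of \cref{Lifting_TS_Commutes_With_Ind}(c).
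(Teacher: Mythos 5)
Your proof is correct; part (a) and the overall skeleton (fixing the cardinalities $|\TS(\AlternatingGroup_5;Q_i)|$ via \cref{Omnibus_properties}(d) and then exhibiting that many modules) coincide with the paper's, but the mechanisms in (b) and (c) are genuinely different. For (b), the paper also reduces to $\widetilde{M}\mid\Ind_{Q_2}^{\AlternatingGroup_5}(k)$, but proves indecomposability by a Green-correspondence dimension bound: $\Ind_{Q_2}^{N_{\AlternatingGroup_5}(Q_3)}(k)$ is the indecomposable $6$-dimensional module from the $\AlternatingGroup_4$ analysis, and its correspondent in $\AlternatingGroup_5$ must have dimension at least $6$, forcing the $6$-dimensional $\widetilde{M}$ to be indecomposable. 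Your replacement --- no projective summands because no projective indecomposable character embeds in $\chi_1+\chi_5$, hence $\widetilde{M}\cong N^{\oplus m}$ with $m^2\le\dim_k\End_{k\AlternatingGroup_5}(\widetilde{M})=\langle\chi_1+\chi_5,\chi_1+\chi_5\rangle=2$ --- is equally valid and arguably more self-contained, resting only on \cref{Landrock_and_Landrock-Scott}(c). For (c), the paper is terser: it computes $\chi_{\reallywidehat{\Ind_{Q_3}^{\AlternatingGroup_5}(k)}}=\chi_1+\chi_4+2\chi_5$ and dismisses $\chi_4$ as lying outside $B_0(k\AlternatingGroup_5)$, leaving implicit why the $2\chi_5$ part yields two non-isomorphic indecomposable $5$-dimensional summands of full vertex. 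Your construction of $M_7$ and $M_8$ as $\Ind_{N_{\AlternatingGroup_5}(Q_3)}^{\AlternatingGroup_5}(S_\omega)$ and $\Ind_{N_{\AlternatingGroup_5}(Q_3)}^{\AlternatingGroup_5}(S_{\overline{\omega}})$, with indecomposability read off from the irreducibility of the induced character $\chi_5$ and distinctness from the Green correspondence together with $S_\omega\not\cong S_{\overline{\omega}}$, supplies exactly this missing detail and moreover pins down the full decomposition $\Ind_{Q_3}^{\AlternatingGroup_5}(k)\cong k\oplus M_7\oplus M_8\oplus S_4$. The only point worth making explicit is that when you rule out vertex $Q_2$ for $M_7$ and $M_8$ you should record that every order-two subgroup of $Q_3$ is $\AlternatingGroup_5$-conjugate to $Q_2$ (all involutions of $\AlternatingGroup_5$ are conjugate), so that ``vertex of order two'' indeed means ``vertex conjugate to $Q_2$'', after which your dimension comparison with $\widetilde{M}$ applies.
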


\begin{proof}
Since
\begin{align*}
|[\overline{N}_{\AlternatingGroup_{5}}(Q_1)]_{2^\prime}|&=|\{1, d, ad, {ad}^2\}|=4,\ |[\overline{N}_{\AlternatingGroup_{5}}(Q_2)]_{2^\prime}|=|\{1Q_2\}|=1,\ \textup{and}\\
|[\overline{N}_{\AlternatingGroup_{5}}(Q_3)]_{2^\prime}|&=|\{1Q_3,cQ_3,c^2Q_3\}|=3,
\end{align*}
it follows from  \cref{Omnibus_properties} that
$$|\TS(\AlternatingGroup_{5};Q_1)|=4,\ |\TS(\AlternatingGroup_{5};Q_2)|=1,\ \textup{and}\ |\TS(\AlternatingGroup_{5};Q_3)|=3.$$
The~$k{\AlternatingGroup_{5}}$-modules $P(S_1)$, $P(S_2)$, $P(S_3)$ and $P(S_4)$ are trivial source modules by definition. This proves (a). We have $Q_2\in \textup{Syl}_2(H)$. As there is only one $p'$-conjugacy class in $\overline{N}_{H}(Q_2)\cong\TrivialGroup$, we deduce that $\TS(H;Q_2)=\{k_H\}$. All other trvial source $kH$-modules are projective and their dimensions are therefore divisible by $2$. Since the $kH$-module ${\Ind}_{Q_2}^{H}(k)$ is $5$-dimensional, we obtain:
$${\Ind}_{Q_2}^{H}(k) \cong k\oplus X$$ 
\noindent for some projective $kH$-module $X$. Therefore,
$${\Ind}_{Q_2}^{{\AlternatingGroup_{5}}}(k) = {\Ind}_{H}^{{\AlternatingGroup_{5}}} {\Ind}_{Q_2}^{H}(k)
={\Ind}_{H}^{{\AlternatingGroup_{5}}} (k\oplus X)
={\Ind}_{H}^{{\AlternatingGroup_{5}}}(k)\oplus {\Ind}_{H}^{{\AlternatingGroup_{5}}}(X).$$
\noindent Note that the $k{\AlternatingGroup_{5}}$-module $M_5={\Ind}_{H}^{\AlternatingGroup_{5}}(k)$ is $6$-dimensional. By transitivity of induction, we have 
$${\Ind}_{Q_2}^{{\AlternatingGroup_{5}}}(k) = 
\Ind_{N_{\AlternatingGroup_{5}}(Q_3)}^{{\AlternatingGroup_{5}}} {{\Ind}_{Q_2}^{N_{\AlternatingGroup_{5}}(Q_3)}}(k).$$
We know from \cref{PropertiesTS_A4_modules} that the $kN_{\AlternatingGroup_{5}}(Q_3)$-module  $\Ind_{Q_2}^{N_{\AlternatingGroup_{5}}(Q_3)}(k)$ is indecomposable. Since~$\dim_k({\Ind}_{Q_2}^{N_{\AlternatingGroup_{5}}(Q_3)}(k))=6$ and~$N_{\AlternatingGroup_{5}}(Q_2)\leq N_{{\AlternatingGroup_{5}}}(Q_3)$, we deduce from the Green correspondence~(see, e.g., {\cite[(20.6) Theorem]{CurtisReinerMethods1}}) that $\dim_k(g({\Ind}_{Q_2}^{N_{\AlternatingGroup_{5}}(Q_3)}(k)))\geq 6$, as restriction does not change the dimension. Hence, the $k\AlternatingGroup_5$-module $M_5 = {\Ind}_{H}^{{\AlternatingGroup_{5}}}(k)$ is indecomposable. It follows from \cref{Lifting_TS_Commutes_With_Ind} that $\chi_{\widehat{M_5}} = \Ind_{H}^{\AlternatingGroup_5}(1_H) = \chi_1 + \chi_5$. This proves (b). Next, consider the~$k{\AlternatingGroup_{5}}$-module $L:={\Ind}_{Q_3}^{{\AlternatingGroup_{5}}}(k)$. By \cref{Lifting_TS_Commutes_With_Ind}, the ordinary character~$\chi_{\widehat{L}}$ of~$L$ is equal to
$$\chi_{\widehat{L}}={\Ind}_{Q_3}^{\AlternatingGroup_5}(1_{Q_3}) = \chi_1 + \chi_4 + 2\cdot \chi_5.$$
\noindent Since $\chi_4$ does not belong to $\Irr_K(B_0(k{\AlternatingGroup_{5}}))$, the set $\TS({\AlternatingGroup_{5}};Q_3)$, as well as the trivial source characters, are as asserted.
\end{proof}	


Finally, we compute the trivial source character table of the group $\AlternatingGroup_5$ at the prime $p=2$. We remark that $\Triv_2(\AlternatingGroup_5)$ already appears (without proof) in \cite[Appendix, page 199]
{book:oldbenson}.

\begin{prop}\label{Triv_2_A5}
	Labelling the ordinary characters of $\AlternatingGroup_5$ as in \cref{table:Ordinary_ct_A_5}, the trivial source character table $\Triv_2(\AlternatingGroup_5)$ is as given in \cref{table:TS_ct_A5_p_is_2}.
\end{prop}

	\begin{table}[ht]
		\setlength\extrarowheight{1.5pt}
		\centering
		{\scalebox{0.75}{
				\begin{tabular}{@{}l@{}l@{}l@{}l@{}l@{}l@{}l@{}l@{}l@{}l@{}}
					\(\begin{array}{|lV{4}cccc|c|ccc|}
						\hline
						Q_v\ (1\leq v\leq 3) & \multicolumn{4}{c|}{Q_{1}\cong C_1} & \multicolumn{1}{c|}{Q_{2}\cong C_2} & \multicolumn{3}{c|}{Q_{3}\cong V_4}\\ \hline
						N_v\ (1\leq v\leq 3) & \multicolumn{4}{c|}{N_{1}\cong \AlternatingGroup_5} & \multicolumn{1}{c|}{N_{2}\cong V_4} & \multicolumn{3}{c|}{N_{3}\cong \AlternatingGroup_4}\\ \hline
						n_j\ \in\ N_v & 1 & d & ad & {(ad)}^2 & 1 & 1 & c & c^2\\ \Xhline{4\arrayrulewidth}
						\chi_{1}+\chi_{2}+\chi_{3}+ \chi_{5} & 12 & 0 & 2 & 2 & 0 & 0 & 0 & 0\\
						\chi_{3}+ \chi_{5} & 8 & -1 & -\eta^{2}-\eta^{3} & -\eta-\eta^{4} & 0 & 0 & 0 & 0\\
						\chi_{2} + \chi_{5} & 8 & -1 & -\eta-\eta^{4} & -\eta^{2}-\eta^{3} & 0 & 0 & 0 & 0\\
						\chi_{4} & 4 & 1 & -1 & -1 & 0 & 0 & 0 & 0\\
						\hline
						\chi_{1} + \chi_{5} & 6 & 0 & 1 & 1 & 2 & 0 & 0 & 0\\
						\hline
						\chi_{1} & 1 & 1 & 1 & 1 & 1 & 1 & 1 & 1\\
						\chi_{5} & 5 & -1 & 0 & 0 & 1 & 1 & \omega & \omega^{2}\\
						\chi_{5} & 5 & -1 & 0 & 0 & 1 & 1 & \omega^{2} & \omega\\
						\hline	
					\end{array}\)\\
				\end{tabular}	
		}}
		\caption{Trivial source character table of $\AlternatingGroup_5$ at $p=2$}
		\label{table:TS_ct_A5_p_is_2}
	\end{table}

\begin{proof}
Most entries of $\Triv_2(\AlternatingGroup_5)$ follow from \cref{PropertiesTS_A5_modules} in combination with \cref{LemmaRickard} and \cref{rem:tsctbl}(c). For the remaining entries see \cite[Theorem 5.6]{BBCLNF} (via the isomorphism $\operatorname{PSL}_2(5)\cong \AlternatingGroup_5$) or \cite[Proposition 4.1.8]{BBthesis}.
\end{proof}

\section{Trivial source characters in blocks with Klein four defect groups}\label{sec:TS_characters_in_blocks_with_Klein_four_defect_groups}
\begin{nota}
\noindent Throughout this section, we agree on the following assumptions:
\begin{center}
\fbox{\begin{minipage}{13cm}{\begin{enumerate}
				\item $H$ is a finite group;
				\item $\textup{char}(k) = p = 2$;
				\item $B^\prime \in \Bl(kH)$;
				\item $D(B^\prime)$ is an arbitrary but fixed defect group of $B^\prime$.
		\end{enumerate}}
\end{minipage}}
\end{center}
\end{nota}

\indent It has been proved recently that the Morita equivalence classes of blocks with Klein-four defect groups coincide with the splendid Morita equivalence classes of these blocks:

\begin{thm}[{\cite[Theorem 1.1]{KleinFourDefectGroupsCravenEatonLinckelmannK}}]\label{KleinFourDefectGroupsCravenEatonLinckelmannK}
Let $B^\prime\in\Bl(kH)$ such that $D(B^\prime)\cong V_4$. Then there exists a splendid Morita equivalence between $B^\prime$ and either $kV_4$ or $k\AlternatingGroup_4$ or $B_0(k\AlternatingGroup_5)$.
\end{thm}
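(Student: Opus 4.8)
The plan is to take the Morita classification as already established and to upgrade each Morita equivalence to a splendid one, organising the argument according to the inertial quotient of $B'$. By \cref{thm:FarnsteinerDomestic} we already know that $B'$ is Morita equivalent to exactly one of $kV_4$, $k\AlternatingGroup_4$, or $B_0(k\AlternatingGroup_5)$, so the point is to show that the equivalence may be chosen to be induced by a $p$-permutation bimodule with Klein-four vertices, i.e.\ to be splendid. Since $D(B')\cong V_4$ is abelian, the saturated fusion system of $B'$ is determined by its inertial quotient $E$, a $2'$-subgroup of $\Aut(V_4)\cong\SymmetricGroup_3$; hence $E\in\{\TrivialGroup,C_3\}$, and one may argue separately for the two possibilities.

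If $E=\TrivialGroup$, then $B'$ is a nilpotent block, and Puig's structure theorem for nilpotent blocks shows that its source algebra is isomorphic to $\Mat_n(k)\otimes_k kV_4$ for some $n\ge 1$. The induced Morita equivalence between $B'$ and $kV_4$ is therefore a source-algebra equivalence, and in particular splendid; this settles case~(i).

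The remaining, and genuinely hard, case is $E=C_3$. Here the fusion system of $B'$ is the unique non-nilpotent saturated fusion system on $V_4$, in which the three involutions are fused; crucially, this \emph{same} fusion system is shared by both $k\AlternatingGroup_4$ and $B_0(k\AlternatingGroup_5)$, so fusion alone does not separate the two remaining Morita classes. To proceed I would invoke Linckelmann's analysis of the source algebras of Klein-four blocks, which pins down the source algebra of $B'$ up to a single parameter whose two admissible values are realised by $k\AlternatingGroup_4$ (with its normal defect group) and by $B_0(k\AlternatingGroup_5)$ (whose simple modules have dimensions $1,2,2$). Splendidness would then be obtained by exhibiting the equivalence-inducing bimodule as a $p$-permutation bimodule with vertex $\Delta V_4$: concretely, one transports the splendid equivalence attached to the local structure of the block --- the splendid Rickard equivalence realising Brou\'e's abelian defect group conjecture for these blocks --- and checks, using the character values, that it maps simple modules to simple modules, hence is a splendid Morita equivalence rather than merely a derived one.

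The main obstacle is precisely this non-nilpotent case: excluding any further Morita class and deciding which of $k\AlternatingGroup_4$ and $B_0(k\AlternatingGroup_5)$ occurs cannot be settled by elementary block invariants, since the fusion system, the value $|\Irr(B')|=4$, and the value $|\IBr_2(B')|=3$ coincide for the two non-nilpotent candidates. The decisive ingredient is the classification of finite simple groups, entering through a reduction of $B'$ to blocks of quasisimple groups where the two possibilities are verified directly. This reduction, together with the identification of the equivalence-inducing bimodule as a $p$-permutation bimodule, is the heart of the Craven--Eaton--Kessar--Linckelmann theorem and lies well beyond the formal manipulation of the data assembled in this section.
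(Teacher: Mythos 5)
The paper does not prove this statement at all: it is imported verbatim from Craven--Eaton--Kessar--Linckelmann and used as a black box throughout Section~4, so there is no internal argument to compare yours against. Judged as a proof in its own right, your proposal is an accurate description of the architecture of the literature proof (nilpotent versus non-nilpotent inertial quotient, with the classification of finite simple groups entering through a reduction to quasisimple groups), but it is not a proof. In the non-nilpotent case you defer the decisive step to ``the heart of the Craven--Eaton--Kessar--Linckelmann theorem,'' which is precisely the statement to be established, so the argument is circular there by your own admission.

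There is also a genuine gap in the case you present as settled. For a nilpotent block $B'$ with defect group $V_4$, Puig's structure theorem gives the source algebra as $\End_k(V)\otimes_k kV_4$ for an indecomposable capped endopermutation $kV_4$-module $V$ with vertex $V_4$; the resulting Morita equivalence with $kV_4$ is splendid only if $V$ is a trivial source module, i.e.\ $V\cong k$. Since the Dade group of $V_4$ in characteristic $2$ is infinite cyclic, generated by $\Omega(k)$, there are infinitely many candidates for $V$, and ruling out $\Omega^{n}(k)$ for $n\neq 0$ --- equivalently, showing that the unique simple $B'$-module has trivial source --- is itself part of what Craven--Eaton--Kessar--Linckelmann prove via the classification. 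So ``nilpotent $\Rightarrow$ source-algebra equivalence with $kV_4$, hence splendid'' does not follow from Puig's theorem alone. Within the present paper the correct stance is simply to cite the result, as the author does; any attempt to prove it from the material of Sections~2--3 cannot succeed.
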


As a consequence of \cref{KleinFourDefectGroupsCravenEatonLinckelmannK}, in the sequel we are able to express the ordinary characters of the trivial source modules belonging to $B^\prime$ if we are given the character values of $\Irr_K(B^\prime)$.

\subsection{There exists a splendid Morita equivalence between \texorpdfstring{$B^\prime$}{B'} and \texorpdfstring{$kV_4$}{kV4}}

As a starting point, we begin this subsection by quoting the following interesting result due to Richard Brauer:

\begin{prop}[{\cite[Proposition 7B]{BrauerIV}}] \label{Proposition_BrauerIV}
Let $B^\prime\in\Bl(kH)$. Suppose that there exists a splendid Morita equivalence between $B^\prime$ and $kV_4$. Denote the elements of the defect group $D(B^\prime)$ of $B'$ by $1, u, v, uv$. Denote the ordinary irreducible characters in $B^\prime$ by~$\chi_\alpha, \chi_\beta, \chi_\gamma, \chi_\delta$. Then, after a suitable relabelling of the characters $\chi_\iota$, $\iota\in \{\alpha, \beta,\gamma, \delta\}$, the following holds for the $i$-th character in $\Irr_K(B^\prime)$, for $1\leq i \leq 4$.
\begin{enumerate}[label=\textup{(\Roman*)}]
	\item If $u\sim v\sim uv$ in $H$ then there are positive integers $n_1, n_2, n_3$ and signs $\varepsilon_{i,1}, \varepsilon_{i,2}, \varepsilon_{i,3}\in\{\pm 1\}$ such that
	$$\chi_i(u) = \varepsilon_{i,1} n_1 + \varepsilon_{i,2} n_2 + \varepsilon_{i,3} n_3.$$ 
	\item If $u\not\sim v\sim uv$ in $H$ then there are positive integers $n_1, n_2, n_3$ and signs $\varepsilon_{i,1}, \varepsilon_{i,2}, \varepsilon_{i,3}\in\{\pm 1\}$ such that
	$$\chi_i(u) = \varepsilon_{i,1} n_1;\ \chi_i(v) = \varepsilon_{i,2} n_2 + \varepsilon_{i,3} n_3.$$ 
	\item If $u\not\sim v\not\sim uv\not\sim u$ in $H$ then there are positive integers $n_1, n_2, n_3$ and signs $\varepsilon_{i,1}, \varepsilon_{i,2}, \varepsilon_{i,3}\in\{\pm 1\}$ such that
	$$\chi_i(u) = \varepsilon_{i,1} n_1;\ \chi_i(v) = \varepsilon_{i,2} n_2;\ \chi_i(uv) = \varepsilon_{i,3} n_3.$$  
\end{enumerate}
In each of the three formul\textit{\ae} \textup{(I)}, \textup{(II)}, and \textup{(III)} above the integers $\varepsilon_{i,1}, \varepsilon_{i,2}, \varepsilon_{i,3}\in\{\pm 1\}$  in front of $n_1, n_2, n_3$ are given by
$$\begin{pmatrix}[1] \varepsilon_{1,1}  & \varepsilon_{1,2}  & \varepsilon_{1,3} \\ \varepsilon_{2,1}  & \varepsilon_{2,2}  & \varepsilon_{2,3} \\ \varepsilon_{3,1}  & \varepsilon_{3,2}  & \varepsilon_{3,3} \\ \varepsilon_{4,1}  & \varepsilon_{4,2}  & \varepsilon_{4,3} \end{pmatrix}=\begin{pmatrix}[1] +1 & +1 & +1\\ +1 & -1 & -1\\ -1 & +1 & -1\\ -1 & -1 & +1\end{pmatrix}.$$
\end{prop}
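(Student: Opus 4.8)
The plan is to read off the values $\chi_i(a),\chi_i(b),\chi_i(ab)$ from the \emph{generalized decomposition numbers} attached to the $2$-singular sections of $B'$, using the splendid Morita equivalence only to fix the local invariants. Because $B'$ is splendidly Morita equivalent to $kV_4$, it has a single simple module up to isomorphism, exactly $|\Irr_K(B')|=4$ ordinary irreducible characters, and ordinary decomposition matrix $(1,1,1,1)^{\mathrm{tr}}$; in particular its inertial quotient is trivial, so $B'$ is a \emph{nilpotent} block with defect group $D(B')\cong V_4$. Hence, for each involution $u\in\{a,b,ab\}$, every block $b_u$ of $C_H(u)$ with $(b_u)^H=B'$ is itself nilpotent with defect group $C_{D(B')}(u)=D(B')\cong V_4$, so $b_u$ has a unique irreducible Brauer character $\psi_{b_u}$, of positive dimension $\psi_{b_u}(1)$, and Cartan invariant $4$. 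Brauer's second main theorem then gives, for $1\le i\le 4$,
$$\chi_i(u)=\sum_{(b_u)^H=B'} d^{\,u}_{\chi_i,\psi_{b_u}}\,\psi_{b_u}(1),$$
a sum of terms of the form (integer)$\cdot$(positive integer), one per subsection $(u,b_u)$ lying over $u$.

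First I would assemble the generalized decomposition matrix, whose columns are indexed by the trivial subsection $(1,B')$ together with the non-trivial subsections; since the matrix is square of size $|\Irr_K(B')|=4$ and each local block contributes a single column, there are exactly three non-trivial subsections, one for each of $a,b,ab$ (nilpotency of $B'$ forces these three, with no fusion among them at the level of subpairs). The column of $(1,B')$ is $(1,1,1,1)^{\mathrm{tr}}$. Because $u$ has order $2$, the entries of the remaining columns lie in $\mathbb{Z}[\zeta_2]=\mathbb{Z}$; the orthogonality relations for generalized decomposition numbers make each non-trivial column orthogonal to the others and to the all-ones column (so its entries sum to $0$), with self-inner-product equal to the Cartan invariant $4$. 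An integer vector of length $4$ with zero coordinate-sum and squared norm $4$ has exactly two entries $+1$ and two entries $-1$. Thus the four columns form a $4\times4$ Hadamard-type matrix with pairwise orthogonal columns, and after relabelling the $\chi_\iota$ (permuting rows) and the $n_j$ (permuting the non-trivial columns) the non-trivial part is a candidate for the sign matrix
$$\begin{pmatrix} +1 & +1 & +1\\ +1 & -1 & -1\\ -1 & +1 & -1\\ -1 & -1 & +1\end{pmatrix}, \qquad n_j:=\psi_{b_{u_j}}(1)>0.$$

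The three cases then come from how the three subsections are grouped by $H$-conjugacy of $a,b,ab$: in case (III) the involutions are pairwise non-conjugate, so each of $\chi_i(a),\chi_i(b),\chi_i(ab)$ is a single term; in case (II) the class of $a$ carries one subsection while the class $\{b,ab\}$ carries two, yielding $\chi_i(a)=\varepsilon_{i,1}n_1$ and $\chi_i(b)=\chi_i(ab)=\varepsilon_{i,2}n_2+\varepsilon_{i,3}n_3$; in case (I) all three subsections lie over the single class, giving $\chi_i(a)=\chi_i(b)=\chi_i(ab)=\varepsilon_{i,1}n_1+\varepsilon_{i,2}n_2+\varepsilon_{i,3}n_3$. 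Since $\chi_i$ is a class function, these groupings are exactly dictated by the fusion hypotheses, and the stated formulae (I)--(III) follow once the sign matrix is identified.

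The hard part will be pinning the sign matrix down \emph{within} its equivalence class: orthogonality and the column norms only determine the three non-trivial columns up to an overall sign each, and the alternative choice (for instance flipping one column) satisfies all the same relations yet is \emph{not} obtainable from the displayed matrix by any row/column permutation — the entrywise product of the three non-trivial columns is a permutation-invariant equal to $\pm(1,1,1,1)^{\mathrm{tr}}$ and must come out $+$. To force this I would use the parametrisation $\Irr_K(B')\longleftrightarrow\Irr(D(B'))$ furnished by nilpotency (Brou\'e--Puig): writing $\chi_i\leftrightarrow\theta_i\in\Irr(V_4)$, the sign $\varepsilon_{i,\bullet}$ at the subsection over $x\in\{a,b,ab\}$ equals $\theta_i(x)$, and the group relation $ab=a\cdot b$ with $\theta_i(ab)=\theta_i(a)\theta_i(b)$ yields the multiplicativity $\varepsilon_{i,3}=\varepsilon_{i,1}\varepsilon_{i,2}$. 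This is precisely the constraint that selects the correct Hadamard class, and together with the two-plus/two-minus shape it identifies the sign matrix with the character table of $V_4$, completing the proof.
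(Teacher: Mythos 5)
The paper does not actually prove this statement: it is quoted verbatim from Brauer's \emph{On $2$-blocks with quaternion and quaternion-free defect groups IV} (the cited Proposition 7B), so there is no in-paper proof to compare against. Judged on its own terms, your framework is the right one and is essentially Brauer's original subsection analysis in modern dress: nilpotency of $B^\prime$ and of the local blocks $b_u$, Brauer's second main theorem, the square generalized decomposition matrix with one column per class of subsections, integrality of the entries for involutions, and the orthogonality relations forcing each non-trivial column to be a $\pm 1$-vector with two entries of each sign. Your case analysis (I)--(III) via the distinction between $H$-fusion of the involutions and the absence of fusion of subpairs in a nilpotent block is also correct.

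The genuine gap is exactly at the step you yourself flag as ``the hard part,'' and the tool you invoke does not close it. The Brou\'e--Puig parametrisation gives $\Irr_K(B^\prime)=\{\lambda * \chi \mid \lambda\in\Irr(D(B^\prime))\}$ for \emph{some} base character $\chi$, with $d^{u}_{\lambda*\chi}=\lambda(u)\,d^{u}_{\chi}$; hence the column over $u$ is $\bigl(\lambda(u)\,\delta_u\bigr)_\lambda$ with an undetermined sign $\delta_u:=d^{u}_{\chi}\in\{\pm 1\}$ per involution. Changing the base character by $\mu\in\Irr(D(B^\prime))$ replaces $\delta_u$ by $\mu(u)\delta_u$, so the product $\delta_a\delta_b\delta_{ab}$ is independent of all choices --- and it is precisely your permutation-invariant ``entrywise product of the three non-trivial columns.'' Asserting $\varepsilon_{i,x}=\theta_i(x)$, i.e.\ the multiplicativity $\varepsilon_{i,3}=\varepsilon_{i,1}\varepsilon_{i,2}$, amounts to asserting $\delta_a\delta_b\delta_{ab}=+1$, which is the very fact to be proved; as written the argument is circular. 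The gap is fillable with material already in this paper, and this is where \emph{splendidness} (rather than plain Morita equivalence plus nilpotency) genuinely enters: by \cref{Omnibus_properties}(f) the unique simple $B^\prime$-module $S^\prime$ is a trivial source module with vertex $D(B^\prime)$, its trivial source lift affords an \emph{irreducible} character of $B^\prime$ (since the decomposition matrix is a column of $1$'s, any lift of $S^\prime$ affords some $\chi_\iota$), and \cref{Landrock_and_Landrock-Scott}(a)\&(b) force that character to take strictly positive values at $a$, $b$ and $ab$. This pins one row of the sign matrix to $(+,+,+)$, and your own orthogonality computation (each pair of non-trivial columns agrees in exactly two positions, one of which is now the all-plus row) then forces the remaining three rows to be $(+,-,-)$, $(-,+,-)$, $(-,-,+)$ in some order, completing the identification with the character table of $V_4$. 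Alternatively, one can transport the entire generalized decomposition matrix from $kV_4$ through the splendid equivalence, whose Brauer constructions induce honest Morita equivalences (hence sign $+1$ perfect isometries) at each local block; either repair should be stated explicitly.
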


\indent Let $G:=V_4$ and let $B:=B_0(kG)$. Assume there exists a splendid Morita equivalence between~$B$ and $B'$ defined by a functor~$F$. We keep the notation from \cref{Notation_V4}. In particular, we set $\Irr_K(B):=\{\chi_1,\chi_2,\chi_3,\chi_4\}$ and $D(B):= \langle a, b\rangle$. Moreover, we set $\Irr_K(B'):=\{\chi_\alpha, \chi_\beta, \chi_\gamma, \chi_\delta\}$ and $D(B^\prime):= \langle a^\prime, b^\prime\rangle$. Furthermore, we endow the images of the $B$-modules under $F$ with the symbol ${}^\prime$.

\begin{rem}
Here we assume that only the existence of $F$ is known, but not an explicit bimodule inducing $F$.
\end{rem}

\begin{thm} \label{TS_characters_Blocks_Puig_equivalent_to_kV4}
The following assertions about the trivial source $B^\prime$-modules hold.
\begin{enumerate}[label=\textup{(\alph*)}]
	\item The trivial source $B^\prime$-modules are given as follows.
		\begin{enumerate}[label=\textup{(\roman*)}]
			\item We have~$\TS(B^\prime;D(B^\prime))=\{S^\prime\}$, where $S^\prime$ denotes the unique simple $B^\prime$-module (up to isomorphism).
			\item We have $\TS(B^\prime;\TrivialGroup)=\{P(S^\prime)\}$.
			\item Up to isomorphism, there are exactly three non-isomorphic trivial source $B'$-modules $M_1^\prime$, $M_2^\prime$ and $M_3^\prime$, respectively, with non-trivial cyclic vertices isomorphic to $C_2$.			
		\end{enumerate}
	\item After a suitable relabelling of the elements of $\Irr_K(B^\prime)$ we have
		$$\chi_{\widehat{\, S^\prime\, }}=\chi_\alpha,\ \ \chi_{\widehat{P(S^\prime)}}=\chi_\alpha + \chi_\beta + \chi_\gamma + \chi_\delta,\ \ \chi_{\widehat{M_1^\prime}} = \chi_\alpha + \chi_\beta,\ \ \chi_{\widehat{M_2^\prime}} = \chi_\alpha + \chi_\gamma,\ \ \chi_{\widehat{M_3^\prime}} = \chi_\alpha + \chi_\delta.$$
	\item The character values of $\Irr_K(B^\prime)$ determine $\chi_{\widehat{S^\prime}}$ uniquely.
\end{enumerate}
\end{thm}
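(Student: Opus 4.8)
The plan is to isolate $\chi_{\widehat{S'}}$ among the four characters of $\Irr_K(B')$ by a criterion formulated solely in terms of their values on the three involutions $a',b',ab'$ of $D(B')$, and then to check, via Brauer's \cref{Proposition_BrauerIV}, that exactly one irreducible character of $B'$ meets it.

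First I would record the constraints satisfied by the already-determined character $\chi_\alpha=\chi_{\widehat{S'}}$. Since $S'$ has vertex $D(B')$ and every involution of $D(B')$ lies in this vertex, \cref{Landrock_and_Landrock-Scott}(a) and (b) give $\chi_\alpha(x)>0$ for each $x\in\{a',b',ab'\}$. Moreover, by part~(b) the class functions $\chi_\alpha+\chi_\beta$, $\chi_\alpha+\chi_\gamma$, $\chi_\alpha+\chi_\delta$ are the characters $\chi_{\widehat{M_1'}},\chi_{\widehat{M_2'}},\chi_{\widehat{M_3'}}$ of trivial source modules, hence non-negative at all $2$-elements, again by \cref{Landrock_and_Landrock-Scott}(a). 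This suggests the characterisation: $\chi_{\widehat{S'}}$ is an irreducible character of $B'$ that is strictly positive at $a',b',ab'$ and whose sum with every other character in $\Irr_K(B')$ is non-negative at these involutions --- a property one can read off directly from the given values.

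Uniqueness is then established by eliminating $\chi_\beta,\chi_\gamma,\chi_\delta$, organised along the three fusion patterns \textup{(I)}--\textup{(III)} of \cref{Proposition_BrauerIV}. When $a'\not\sim ab'$ (cases \textup{(II)} and \textup{(III)}) the sign matrix shows at once that each of $\chi_\beta,\chi_\gamma,\chi_\delta$ is already strictly negative at one involution, so positivity alone discards them. The delicate case is \textup{(I)}, where $a'\sim b'\sim ab'$: a single value $\chi_i(a')=\varepsilon_{i,1}n_1+\varepsilon_{i,2}n_2+\varepsilon_{i,3}n_3$ controls everything, and one of $\chi_\beta,\chi_\gamma,\chi_\delta$ can itself be positive at $a'$ (exactly when one $n_j$ exceeds the sum of the other two). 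This is the main obstacle: strict positivity alone does not suffice to single out $\chi_\alpha$.

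To clear it I would bring in the second condition. From the sign matrix the pairwise sums evaluate as $(\chi_\beta+\chi_\gamma)(a')=-2n_3$, $(\chi_\beta+\chi_\delta)(a')=-2n_2$, and $(\chi_\gamma+\chi_\delta)(a')=-2n_1$, all strictly negative, so each of $\chi_\beta,\chi_\gamma,\chi_\delta$ admits a partner in $\Irr_K(B')$ with which its sum is negative at $a'$; whereas $(\chi_\alpha+\chi_\beta)(a')=2n_1$, $(\chi_\alpha+\chi_\gamma)(a')=2n_2$, and $(\chi_\alpha+\chi_\delta)(a')=2n_3$ are positive. Hence only $\chi_\alpha$ satisfies both conditions, which identifies $\chi_{\widehat{S'}}$ purely from the character values and completes the proof. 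Beyond case~\textup{(I)}, I expect only routine bookkeeping of which fused involution to test once the three cases are separated.
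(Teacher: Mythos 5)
Your proposal proves only part (c): you explicitly take the module classification of part (a) and, more importantly, the identities $\chi_{\widehat{M_i^\prime}}=\chi_\alpha+\chi_j$ of part (b) as already established. In the paper those two parts carry most of the weight: (a) rests on the invariance of trivial source modules and their vertices under splendid Morita equivalences (\cref{Omnibus_properties}(f)), and (b) requires transporting the decomposition matrix, observing that each $M_i^\prime$ is uniserial of length two with both composition factors isomorphic to $S^\prime$, computing $\dim_k\Hom_{kH}(S^\prime,M_i^\prime)=1$ and $\dim_k\Hom_{kH}(M_i^\prime,M_j^\prime)=1$ for $i\neq j$, and then applying \cref{Landrock_and_Landrock-Scott}(c) to conclude that the three modules $M_i^\prime$ acquire the three distinct second constituents $\chi_\beta,\chi_\gamma,\chi_\delta$. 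None of this appears in your write-up, so as a proof of the theorem as stated there is a genuine gap; moreover your argument for (c) invokes (b), so the omission cannot be waved away as routine.

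For part (c) itself your argument is correct, and it takes a genuinely different route from the paper's. The paper singles out $\chi_{\widehat{S^\prime}}$ as the unique $\chi\in\Irr_K(B^\prime)$ whose value at each $x\in D(B^\prime)$ is maximal in the block: by the sign matrix of \cref{Proposition_BrauerIV} the difference $\chi_1(x)-\chi_i(x)$ is twice a positive combination of $n_1,n_2,n_3$ in every fusion case, so maximality treats cases (I)--(III) uniformly and needs no auxiliary condition. You instead use the two-part criterion ``strictly positive at the involutions of $D(B^\prime)$, and all pairwise sums with the other characters of the block non-negative there''; your computations $(\chi_\beta+\chi_\gamma)(a^\prime)=-2n_3$, etc., are right, you correctly observe that positivity alone can fail to separate the characters in case (I) when one $n_j$ dominates, and your second condition (which you justify both module-theoretically, via the sums $\chi_\alpha+\chi_j$ being characters of the trivial source modules $M_i^\prime$, and directly from the sign matrix) closes that case. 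Both criteria are legitimate and purely character-theoretic; the paper's is the more economical one, while yours makes the role of the modules $M_i^\prime$ visible.
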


\begin{proof}
By \cref{Omnibus_properties}(f) both all trivial source modules and their respective vertices are preserved under splendid Morita equivalences. Moreover, every Morita equivalence preserves projective indecomposable modules and simple modules. This proves (a). Decomposition matrices are preserved by Morita equivalences. Hence, the decomposition matrix of $B^\prime$ is
\begin{table}[H]
	\centering
    	{\scalebox{0.8}{
	\(\begin{array}{c|c}
		\hline
		& \varphi_{S^\prime}\\
		\hline
		\chi_\alpha & 1\\
		\chi_\beta & 1\\
		\chi_\gamma & 1\\
		\chi_\delta & 1\\
		\hline
	\end{array}\)
    }}
\end{table}
\noindent and, therefore, $\chi_{\widehat{P(S^\prime)}}=\chi_\alpha + \chi_\beta + \chi_\gamma + \chi_\delta$. Since composition factors are preserved by Morita equivalences, we deduce from the trivial source character table of $kV_4$ that each $M_i^\prime$ has the composition series

\[
	{\scalebox{0.8}{
\boxed{
	\!\begin{aligned}
	&S^\prime\\
	&S^\prime
	\end{aligned}
}
}}
\]
\noindent for $1\leq i\leq 3$. We claim that $\dim_k(\Hom_{kH}(S^\prime,M_i^\prime))=1$ for each $i\in\{1,2,3\}$.\newline
\noindent  Assume we are given $f\in \Hom_{kH}(S^\prime,M_i^\prime)$. Then, the morphism $f$ cannot be surjective. Hence, the induced morphism 
$$\widetilde{f}: S^\prime/ \Rad(S^\prime) \rightarrow  M_i^\prime/ \Rad(M_i^\prime),\quad s^\prime + \Rad(S^\prime) \mapsto f(s^\prime) + \Rad(M_i^\prime)$$
is also not surjective. Thus, by Schur's lemma, $\widetilde{f}(\Hd(S^\prime))=\{0\}$. Therefore, every element of $S^\prime\cong \Hd(S^\prime)$ is mapped to $\textup{Rad}(M_i^\prime)\cong S^\prime$ under the homomorphism $f$. Hence, for each $i\in\{1,2,3\}$, Schur's lemma implies that
$$\dim_k(\Hom_{kH}(S^\prime,M_i^\prime)) =\dim_k(\Hom_{kH}(S^\prime,S^\prime))=1.$$
\noindent The multiplicity of $\chi_\alpha$ as a constituent in $\chi_{\widehat{M_i^\prime}}$ is therefore equal to $1$ by \cref{Landrock_and_Landrock-Scott}(c). The decomposition matrix of $B'$ implies $\chi_\iota(1) = \dim_k(S^\prime)$ for all $\iota\in\{\alpha, \beta, \gamma, \delta \}$. Hence, for each~$i\in\{1,2,3\}$, $\chi_{\widehat{M_i^\prime}}=\chi_\alpha + \chi_j$ for some $j\in \{\beta, \gamma, \delta\}$ due to the shape of the composition series of the $M_i^\prime$. After a suitable relabelling the assertion in (b) follows now from the fact that
$$\dim_k(\Hom_{kH}(M_i^\prime,M_j^\prime)) =1\ \textup{for all}\ 1\leq i\neq j\leq 3.$$
\noindent In order to see this, set $M^\prime:=M_i^\prime$ and $N^\prime:=M_j^\prime$ for some arbitrary $i,j\in\{1,2,3\}\ \textup{with}\ i\neq~j$. Let $f\in \Hom_{kH}(M^\prime,N^\prime)$. Since $\dim_k(M^\prime)=\dim_{k}(N^\prime)$ but $M^\prime\not\cong N^\prime$, the shape of the composition series of $M$ and $N$ respectively implies the following assertion.
$$\ \ (*)\ \ \textup{The morphism}\ f\ \textup{maps every element of}\ \Hd(M^\prime)\ \textup{to}\ \Rad(N^\prime).$$
\noindent Indeed, this follows from $\Hd(M^\prime)\cong S^\prime\cong \Hd(N^\prime)$ and Schur's lemma. Moreover, we claim now that in the present case not only $f(\Rad(M^\prime))\leq \Rad(N^\prime)$ but also $f(\Rad(M^\prime))=\{0\}$ holds.\newline
\noindent In order to see this, let $\widetilde{m}\in\Rad(M^\prime)=\Rad(kH)\cdot M^\prime$. Then $\widetilde{m}=j\cdot m$ for some $m\in M^\prime$ and some~$j\in\Rad(kH)$. Consequently, $f(\widetilde{m})=f(j\cdot m)=j\cdot f(m)$. Due to $(*)$ we have~$f(m) \in \Rad(N^\prime)$. Hence,
$$f(\widetilde{m}) \in \Rad(kH)\cdot \Rad(N^\prime)=\Rad(\Rad(N^\prime))=\Rad(S^\prime)=\{0\}.$$
\noindent Therefore, $\dim_k(\Hom_{kH}(M^\prime,N^\prime)) = \dim_k(\Hom_{kH}(S^\prime,S^\prime))=1$. The assertions in (b) follow now from \cref{Landrock_and_Landrock-Scott}(c).\par
\noindent The ordinary character $\chi_{\widehat{S^\prime}}$ is the unique ordinary irreducible character $\chi$ lying in $B'$ with the property that the integer $\chi(x)$ is as large as possible for each $x\in D(B')$. Indeed, since~$S^\prime$ has maximal vertex $D(B')$, this follows from \cref{Proposition_BrauerIV} and \cref{Landrock_and_Landrock-Scott}(a)\&(b).
\end{proof}

\subsection{There exists a splendid Morita equivalence between \texorpdfstring{$B^\prime$}{B'} and \texorpdfstring{$k\AlternatingGroup_4$}{kA4}}
\noindent Let $G:=\AlternatingGroup_4$. We begin with the following auxiliary lemma where we keep the notation from \cref{PropertiesTS_A4_modules}. Moreover we set $S_1:=k, S_2:=S_\omega$, and $S_3 := S_{\overline{\omega}}$.
\begin{lem}
	We have~$\Soc(M)\cong S_1\oplus S_2\oplus S_3$.
\end{lem}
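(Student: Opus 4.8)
The plan is to determine the head $\Hd(M)$ of $M$ by Frobenius reciprocity and then to recover the socle from the self-duality of $M$. The starting observation I would record is that $\Res_{Q_2}^{\AlternatingGroup_4}(S_i)\cong k$ for each $i\in\{1,2,3\}$: every $S_i$ is one-dimensional, so its restriction to the $2$-group $Q_2$ is a one-dimensional $kQ_2$-module, and over a field of characteristic $2$ the only such module is the trivial one (the image of a $2$-group in $k^\times$ is trivial, since $x^2=1$ forces $x=1$ in characteristic $2$).

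Next I would compute, for each $i$, the multiplicity of $S_i$ in $\Hd(M)$, which equals $\dim_k\Hom_{k\AlternatingGroup_4}(M,S_i)$ because $S_i$ is absolutely simple. By the Nakayama relations (see \cite[Corollary 4.3.8]{Webb}),
$$\Hom_{k\AlternatingGroup_4}(M,S_i)=\Hom_{k\AlternatingGroup_4}\big(\Ind_{Q_2}^{\AlternatingGroup_4}(k),S_i\big)\cong\Hom_{kQ_2}\big(k,\Res_{Q_2}^{\AlternatingGroup_4}(S_i)\big)\cong\Hom_{kQ_2}(k,k),$$
which is one-dimensional. Hence each of the three simple $k\AlternatingGroup_4$-modules occurs exactly once in the (semisimple) head of $M$, so $\Hd(M)\cong S_1\oplus S_2\oplus S_3$.

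Finally I would pass to the socle by duality. Since $\TS(\AlternatingGroup_4;Q_2)=\{M\}$ by \cref{PropertiesTS_A4_modules}(b) and, by \cref{Omnibus_properties}(g) together with the fact that duality preserves vertices, the dual $M^*$ is again a trivial source module with vertex $Q_2$, it follows that $M^*$ lies in the singleton set $\TS(\AlternatingGroup_4;Q_2)$, whence $M^*\cong M$; that is, $M$ is self-dual. Applying \cref{lem:Duals_and_Socles} then yields
$$\Soc(M)\cong\Soc(M^*)\cong\Hd(M)^*\cong(S_1\oplus S_2\oplus S_3)^*.$$
As $S_1=k$ is self-dual while $S_2^*\cong S_3$ and $S_3^*\cong S_2$, the set $\{S_1,S_2,S_3\}$ is closed under taking duals, so $(S_1\oplus S_2\oplus S_3)^*\cong S_1\oplus S_2\oplus S_3$, which gives the claim.

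The computation is essentially routine; the only step demanding care is the self-duality of $M$, where one must note that duality preserves the vertex so that $M^*$ genuinely lands in $\TS(\AlternatingGroup_4;Q_2)$. I would remark that the duality detour can be avoided altogether: since $[\AlternatingGroup_4:Q_2]$ is finite, induction agrees with coinduction, so $\Hom_{k\AlternatingGroup_4}(S_i,M)\cong\Hom_{kQ_2}(\Res_{Q_2}^{\AlternatingGroup_4}(S_i),k)\cong\Hom_{kQ_2}(k,k)\cong k$, which computes $\Soc(M)$ directly in exactly the same way.
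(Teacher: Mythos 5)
Your proof is correct and follows essentially the same route as the paper: compute $\Hd(M)\cong S_1\oplus S_2\oplus S_3$ via the Nakayama relations using that each $\Res_{Q_2}^{\AlternatingGroup_4}(S_i)$ is trivial, then pass to the socle through the self-duality of $M$ (which holds since $M$ is the unique trivial source module with non-trivial cyclic vertex) and \cref{lem:Duals_and_Socles}. Your explicit remark that $\{S_1,S_2,S_3\}$ is closed under duality fills in a step the paper leaves implicit, and the coinduction shortcut you mention is a valid alternative.
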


\begin{proof}
	Set $\widetilde{H}:=\langle (1,2) \rangle$. The $kG$-module $M:=\Ind_{\widetilde{H}}^{\AlternatingGroup_4}(k)$ is, up to isomorphism, the only trivial source $k\AlternatingGroup_4$-module with a non-trivial cyclic vertex. Hence, $M^{*}\cong M$, since dual modules have the same vertices. It follows from \cref{lem:Duals_and_Socles} that $\Soc(M)\cong{(\Hd(M))}^{*}$. By the Nakayama relations (see {\cite[Corollary 4.3.8]{Webb}}), we obtain $\Hom_{kG}(M,S_i)\cong \Hom_{k\widetilde{H}}(k,\Res_{\widetilde{H}}^{\AlternatingGroup_4}(S_i))$ for each $1\leq i \leq 3$. But $\Res_{\widetilde{H}}^{\AlternatingGroup_4}(S_i)$ is a one-dimensional trivial source $k\widetilde{H}$-module, hence isomorphic to $k$ for all $i\in \{1,2,3\}$. Consequently, $\dim_k\Hom_{kG}(M,S_i)=\dim_k\Hom_{k\widetilde{H}}(k,k)=1$ for all~$1\leq i\leq 3$. Since $\dim_k\Hom_{kG}(M,S_i)$ is equal to the multiplicity of $S_i$ as a direct summand in $\Hd(M)$ for all~$i\in\{1,2,3\}$, it follows that $\Hd(M)\cong S_1\oplus S_2\oplus S_3$. Hence, as $\Soc(M)\cong{(\Hd(M))}^{*}$, we deduce that $\Soc(M)\cong S_1\oplus S_2\oplus S_3$.
\end{proof}
\noindent Now, let $B:=B_0(kG)$ and let $B^\prime\in\Bl(kH)$ be a block of another group algebra $kH$. Assume there exists a splendid Morita equivalence between $B$ and $B'$ defined by a functor~$F$. We keep the notation from \cref{Notation_A4}. In particular, we set $\Irr_K(B):=\{\chi_1,\chi_2,\chi_3,\chi_4\}$ and $D(B):= \langle a, b\rangle$. Moreover, we set $\Irr_K(B'):=\{\chi_\alpha, \chi_\beta, \chi_\gamma, \chi_\delta\}$ and $D(B^\prime):= \langle a^\prime, b^\prime\rangle$. Furthermore, we denote the image of a $B$-module $Y$ under $F$ by $Y^\prime$.

\begin{thm}\label{Proposition_ts_Puig_A4}
The following assertions about the trivial source $B^\prime$-modules hold.
	\begin{enumerate}[label=\textup{(\alph*)}]
		\item Up to isomorphism there are exactly $7$ trivial source $B'$-modules. They are given as follows.
		\begin{enumerate}[label=\textup{(\roman*)}]
			\item We have $\TS(B^\prime;D(B^\prime))=\{S_1^\prime, S_2^\prime, S_3^\prime\}$.
			\item We have $\TS(B^\prime;\TrivialGroup)=\{P(S_1^\prime), P(S_2^\prime), P(S_3^\prime)\}$.
			\item Up to isomorphism, there is exactly one trivial source $B'$-module $M^\prime$ with non-trivial cyclic vertices isomorphic to $C_2$.
		\end{enumerate}
		\item After a suitable relabelling of the elements of $\Irr_K(B^\prime)$ we have:
		\begin{align*}
			\chi_{\widehat{S_1^\prime}}&=\chi_\alpha,\quad \chi_{\widehat{S_2^\prime}}=\chi_\beta,\quad  \chi_{\widehat{S_3^\prime}}=\chi_\gamma,\quad \chi_{\widehat{M^\prime}}=\chi_\alpha + \chi_\beta + \chi_\gamma + \chi_\delta,\\
			\chi_{\widehat{P(S_1^\prime)}}&=\chi_\alpha + \chi_\delta,\quad \chi_{\widehat{P(S_2^\prime)}}=\chi_\beta + \chi_\delta,\quad \chi_{\widehat{P(S_3^\prime)}}=\chi_\gamma + \chi_\delta.
		\end{align*}
		\item The set $\Irr_K(B^\prime)$ determines $\chi_\delta$ uniquely and in a purely character-theoretic way.
	\end{enumerate}
\end{thm}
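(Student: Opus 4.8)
The plan is to follow the template of the already-settled $kV_4$-case in \cref{TS_characters_Blocks_Puig_equivalent_to_kV4}, transporting the module-theoretic data of \cref{PropertiesTS_A4_modules} and of the socle lemma immediately preceding the proposition across the splendid Morita equivalence $F$, and then converting everything into character information by means of the decomposition matrix and \cref{Landrock_and_Landrock-Scott}(c). For part (a) I would only invoke \cref{Omnibus_properties}(f), which says that $F$ preserves trivial source modules together with their vertices, together with the fact that every Morita equivalence preserves simple and projective indecomposable modules. Applying $F$ to the three lists of \cref{PropertiesTS_A4_modules} then gives $\TS(B';D(B'))=\{S_1',S_2',S_3'\}$, $\TS(B';\TrivialGroup)=\{P(S_1'),P(S_2'),P(S_3')\}$ and a single vertex-$C_2$ module $M':=F(M)$, which are precisely the $3+3+1=7$ modules of (i)--(iii).

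For part (b) I would first use that Morita equivalences preserve decomposition matrices, so that $\mathfrak{D}(B')=\mathfrak{D}(B_0(k\AlternatingGroup_4))$ has rows $(1,0,0),(0,1,0),(0,0,1),(1,1,1)$. Writing $\chi_\delta$ for the character whose row is $(1,1,1)$ and $\chi_\alpha,\chi_\beta,\chi_\gamma$ for the remaining three, the projective indecomposable characters are read off at once, e.g. $\chi_{\widehat{P(S_1')}}=\chi_\alpha+\chi_\delta$ and likewise for $S_2',S_3'$. That the simple modules lift to irreducible characters, $\chi_{\widehat{S_i'}}\in\{\chi_\alpha,\chi_\beta,\chi_\gamma\}$, follows from $\chi_{\widehat{S_i'}}^\circ=\varphi_{S_i'}$ by expanding $\chi_{\widehat{S_i'}}$ in the irreducibles of $B'$, reducing modulo $p$ and comparing with the decomposition matrix: non-negativity of the multiplicities leaves only the one obvious solution.

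The genuinely delicate point, and the step I expect to be the main obstacle, is the ordinary character of the vertex-$C_2$ module $M'$. Its Brauer character, equivalently its composition factors (which $F$ preserves), is compatible with several ordinary characters, and the decomposition matrix alone cannot separate them. To single out the correct one I would transport the socle computation $\Soc(M)\cong S_1\oplus S_2\oplus S_3$ across $F$, obtaining $\Soc(M')\cong S_1'\oplus S_2'\oplus S_3'$; then \cref{Landrock_and_Landrock-Scott}(c) yields $\langle\chi_{\widehat{S_i'}},\chi_{\widehat{M'}}\rangle=\dim_k\Hom_{kH}(S_i',M')=1$, so each of $\chi_\alpha,\chi_\beta,\chi_\gamma$ occurs in $\chi_{\widehat{M'}}$ exactly once; comparing composition factors with the decomposition matrix then pins the multiplicity of $\chi_\delta$ at one, giving $\chi_{\widehat{M'}}=\chi_\alpha+\chi_\beta+\chi_\gamma+\chi_\delta$.

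Part (c) is then immediate from (b): evaluating the relation $\chi_\delta^\circ=\varphi_{S_1'}+\varphi_{S_2'}+\varphi_{S_3'}$ at the identity gives $\chi_\delta(1)=\chi_\alpha(1)+\chi_\beta(1)+\chi_\gamma(1)$, and since the three summands are positive integers, $\chi_\delta(1)$ strictly exceeds each of them; hence $\chi_\delta$ is the unique element of $\Irr_K(B')$ of maximal degree, a characterisation using nothing but character values. Throughout, the pitfall to avoid is arguing through module dimensions, because a Morita equivalence need not preserve them --- in a general $B'$ the degrees $\chi_\alpha(1),\chi_\beta(1),\chi_\gamma(1)$ need not all equal $1$ as they do for $k\AlternatingGroup_4$. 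The invariants one may legitimately transport are the decomposition matrix and the homological data (composition factors, socle and $\Hom$-spaces), and it is exactly these that drive the argument.
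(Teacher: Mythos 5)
Your proposal is correct and follows essentially the same route as the paper: part (a) via preservation of trivial source modules, vertices, simples and PIMs under the splendid Morita equivalence, part (b) via the transported decomposition matrix together with the transported socle $\Soc(M')\cong S_1'\oplus S_2'\oplus S_3'$, the computation $\dim_k\Hom_{kH}(S_i',M')=1$ and \cref{Landrock_and_Landrock-Scott}(c), and part (c) by singling out $\chi_\delta$ through its degree. The only (harmless) difference is that you make explicit why each $\chi_{\widehat{S_i'}}$ is irreducible, a step the paper leaves implicit in its relabelling.
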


\begin{proof}
By \cref{Omnibus_properties}(f) both all trivial source modules and their respective vertices are preserved under splendid Morita equivalences. Moreover, every Morita equivalence preserves projective indecomposable modules and simple modules. This proves (a). Decomposition matrices are preserved by Morita equivalences. Hence, the matrix
\begin{table}[H]
	\centering
    	{\scalebox{0.8}{
	\(\begin{array}{c|cccc}
		\hline
		&  \varphi_{S_1^\prime} & \varphi_{S_2^\prime} & \varphi_{S_3^\prime}\\
		\hline
		\chi_\alpha & 1 & 0 & 0\\
		\chi_\beta & 0 & 1 & 0\\
		\chi_\gamma & 0 & 0 & 1\\
		\chi_\delta & 1 & 1 & 1\\
		\hline
	\end{array}\)
    }}
\end{table}
\noindent is the decomposition matrix of $B'$. As the composition factors of the trivial source $k\AlternatingGroup_4$-module~$M$ are given by
$$[M] = 2\cdot S_1 + 2\cdot S_2 + 2\cdot S_3,$$
\noindent we deduce that the composition factors of $M^\prime$ are given by
$$[M^\prime] = 2\cdot S_1^\prime + 2\cdot S_2^\prime + 2\cdot S_3^\prime.$$
\noindent Since $\Soc(M)\cong S_1\oplus S_2\oplus S_3$, we deduce that $\Soc(M^\prime)\cong S_1^\prime\oplus S_2^\prime\oplus S_3^\prime$. Investigating the decomposition matrix of $B^\prime$ we see that
$$\chi_\delta(1)=\dim_k(S_1^\prime) + \dim_k(S_2^\prime) + \dim_k(S_3^\prime).$$
\noindent Hence, $\chi_\delta$ can be distinguished from $\chi_\alpha, \chi_\beta$ and $\chi_\gamma$ by its degree. Let $f\in\Hom_{kH}(S_1^\prime,M^\prime)$ be non-trivial. Since $$f(S_1^\prime)=f(\Soc(S_1^\prime))\leq \Soc(M^\prime)\cong S_1^\prime\oplus S_2^\prime\oplus S_3^\prime,$$
we deduce that $\dim_k(\Hom_{kH}(S_1^\prime,M^\prime))=1$. Analogously it follows that $\dim_k(\Hom_{kH}(S_2^\prime,M^\prime))=1$ and that $\dim_k(\Hom_{kH}(S_3^\prime,M^\prime))=1$. Consequently,~$\chi_{\widehat{M^\prime}}=\chi_\alpha + \chi_\beta + \chi_\gamma + \chi_\delta$. This proves Part (b) and Part (c).
\end{proof}

\subsection{There exists a splendid Morita equivalence between \texorpdfstring{$B^\prime$}{B'} and \texorpdfstring{$B_0(k\AlternatingGroup_5)$}{B0(kA5)}}
\noindent Let $G:=\AlternatingGroup_5$. We begin with the following auxiliary lemma where we keep the notation from \cref{PropertiesTS_A5_modules}.

\begin{lem} \label{Lemma_A5_M5_Trivial_Socle}
	The $k\AlternatingGroup_5$-module $M_5\cong\Sc(\AlternatingGroup_{5},Q_2)$ has trivial socle $\textup{Soc}(M_5)\cong k$.
\end{lem}

\begin{proof}
	We have $\Ind_{Q_2}^{\AlternatingGroup_5}(k)\cong M_5\oplus 2\cdot P(S_4)\oplus P(S_2)\oplus P(S_3)$. Thus, it follows from \cref{Lemma_Robinson} that
 $$\Res_{Q_2}^{\AlternatingGroup_5}(S_2)\cong {kQ_2}^{\textup{reg}}\quad\textup{and}\quad \Res_{Q_2}^{\AlternatingGroup_5}(S_3)\cong {kQ_2}^{\textup{reg}}.$$
We observe that for all $i\in \{2,3\}$ we have
	\begin{align*}
		\Hom_{kQ_2}(k,k) &\cong \Hom_{kQ_2}(k,\Soc({kQ_2}^{\textup{reg}}))\cong \Hom_{kQ_2}(k,{kQ_2}^{\textup{reg}})\cong \Hom_{kQ_2}(k,\Res_{Q_2}^{{\AlternatingGroup_{5}}}(S_i))\\
		&\cong \Hom_{k{\AlternatingGroup_{5}}}(\Ind_{Q_2}^{{\AlternatingGroup_{5}}}(k),S_i)\cong \Hom_{k{\AlternatingGroup_{5}}}(M_5\oplus 2\cdot P(S_4)\oplus P(S_2)\oplus P(S_3), S_i)\\
		&\cong \Hom_{k{\AlternatingGroup_{5}}}(\Hd(M_5\oplus 2\cdot P(S_4)\oplus P(S_2)\oplus P(S_3)), S_i)\\
		&\cong \Hom_{k{\AlternatingGroup_{5}}}(\Hd(M_5)\oplus 2\cdot S_4\oplus S_2\oplus S_3, S_i).
	\end{align*}
It follows from $\dim_k(\Hom_{kQ_2}(k,k))=1$ that neither $S_2$ nor $S_3$ is a direct summand of~$\Hd(M_5)$. Using the Nakayama relations (see {\cite[Corollary 4.3.8]{Webb}}), we compute for~$S_1=k$:
	\begin{align*}
		\Hom_{kQ_2}(k,k) &\cong\Hom_{kQ_2}(k,\Res_{Q_2}^{{\AlternatingGroup_{5}}}(S_1))\\
		&\cong \Hom_{k{\AlternatingGroup_{5}}}(\Ind_{Q_2}^{{\AlternatingGroup_{5}}}(k),S_1)\cong \Hom_{k{\AlternatingGroup_{5}}}(M_5\oplus 2\cdot P(S_4)\oplus P(S_2)\oplus P(S_3), S_1)\\
		&\cong \Hom_{k{\AlternatingGroup_{5}}}(\Hd(M_5\oplus 2\cdot P(S_4)\oplus P(S_2)\oplus P(S_3)), S_1)\\
		&\cong \Hom_{k{\AlternatingGroup_{5}}}(\Hd(M_5)\oplus 2\cdot S_4\oplus S_2\oplus S_3, S_1).
	\end{align*}
	\noindent Since $\dim_k(\Hom_{kQ_2}(k,k))=1$, the multiplicity of $S_1=k$ in $\Hd(M_5)$ is equal to one. Hence, $\Hd(M_5)\cong S_1=k$, since $M_5$ belongs to $B_0(k{\AlternatingGroup_{5}})$. As $\TS({\AlternatingGroup_{5}};Q_2) = \{M_5\}$, the $k{\AlternatingGroup_{5}}$-module $M_5$ is self-dual by \cref{Omnibus_properties}(g). 
    Using \cref{lem:Duals_and_Socles} we conclude that $\Soc(M_5)\cong \Hd(M_5)^{*}\cong k^{*}\cong k$.
\end{proof}

Now, let $B:=B_0(kG)$, and let $B^\prime\in\Bl(kH)$ be a block of another group algebra $kH$. Assume there exists a splendid Morita equivalence between $B$ and $B'$ defined by a functor~$F$. We keep the notation from \cref{Notation_A5}. In particular, we set $\Irr_K(B):=\{\chi_1,\chi_2,\chi_3,\chi_5\}$ and $D(B):= \langle a, b\rangle$. Moreover, we set $\Irr_K(B'):=\{\chi_\alpha, \chi_\beta, \chi_\gamma, \chi_\delta\}$ and $D(B^\prime):= \langle a^\prime, b^\prime\rangle$. Furthermore, we endow the images of the $B$-modules under $F$ with the symbol ${}^\prime$.

\begin{thm}\label{Proposition_ts_Puig_A5}
The following assertions about the trivial source $B^\prime$-modules hold.
	\begin{enumerate}[label=\textup{(\alph*)}]
	\item Up to isomorphism, there exist precisely the following $7$ trivial source $B'$-modules.
		\begin{enumerate}[label=\textup{(\roman*)}]
			\item We have $\TS(B^\prime;D(B^\prime))=\{S_1^\prime, M_7^\prime, M_8^\prime\}$.
			\item We have $\TS(B^\prime;\TrivialGroup)=\{P(S_1^\prime), P(M_7^\prime), P(M_8^\prime)\}$.
			\item Up to isomorphism, there is exactly one trivial source $B'$-module $M_5^\prime$ with non-trivial cyclic vertices isomorphic to $C_2$.
		\end{enumerate}
		\item After a suitable relabelling of the elements of $\Irr_K(B^\prime)$ we have:
		\begin{align*}
			\chi_{\widehat{S_1^\prime}}&=\chi_\alpha,\quad \chi_{\widehat{M_7^\prime}}=\chi_\delta,\quad \chi_{\widehat{M_8^\prime}}=\chi_\delta,\quad \chi_{\widehat{M_5^\prime}}= \chi_\alpha + \chi_\delta,\\
			\chi_{\widehat{P(S_1^\prime)}}&= \chi_\alpha + \chi_\beta + \chi_\gamma + \chi_\delta,\quad \chi_{\widehat{P(S_2^\prime)}}=\chi_\gamma + \chi_\delta,\quad \chi_{\widehat{P(S_3^\prime)}}=\chi_\beta + \chi_\delta.
		\end{align*}
		\item The set $\Irr_K(B^\prime)$ determines both $\chi_\alpha$ and $\chi_\delta$ uniquely and in a purely character-theoretic way.
	\end{enumerate}
\end{thm}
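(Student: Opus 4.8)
My plan is to transport the computation for $B:=B_0(k\AlternatingGroup_5)$ from \cref{PropertiesTS_A5_modules,Triv_2_A5} across the given splendid Morita equivalence $F$ and then to reconstruct the ordinary characters from the decomposition matrix together with the positivity properties of trivial source characters collected in \cref{Landrock_and_Landrock-Scott}. Part~(a) is immediate from \cref{Omnibus_properties}(f): $F$ carries trivial source $B$-modules to trivial source $B^\prime$-modules with the same vertices, and any Morita equivalence preserves simple modules and PIMs. Thus the three modules $S_1,M_7,M_8$ of vertex $Q_3\cong V_4$, the three PIMs of $B$, and $\widetilde{M}$ (of vertex $Q_2\cong C_2$) pass to $S_1^\prime,M_7^\prime,M_8^\prime$, to the three PIMs of $B^\prime$, and to $\widetilde{M}^\prime$, respectively. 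As $\TS(B^\prime;\TrivialGroup)$ is the set of the three PIMs of $B^\prime$ and the heads of $M_7^\prime,M_8^\prime$ are non-trivial (the trivial module occurs only once in the head of $\Ind_{Q_3}^{\AlternatingGroup_5}(k)$, in the Scott summand $S_1$), the modules $P(S_1^\prime),P(M_7^\prime),P(M_8^\prime)$ are three distinct PIMs and hence equal $\{P(S_1^\prime),P(S_2^\prime),P(S_3^\prime)\}$, which reconciles the labelling in (a)(ii).

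For part~(b) I would use that Morita equivalences preserve decomposition matrices, so that $\mathfrak{D}(B^\prime)$ coincides with the decomposition matrix of $B$, which one reads off from the projective characters in \cref{Triv_2_A5}. Relabelling $\chi_1,\chi_2,\chi_3,\chi_5$ as $\chi_\alpha,\chi_\beta,\chi_\gamma,\chi_\delta$, its rows, with columns indexed by $\varphi_{S_1^\prime},\varphi_{S_2^\prime},\varphi_{S_3^\prime}$, are $(1,0,0),(1,0,1),(1,1,0),(1,1,1)$; this yields $\chi_{\widehat{P(S_1^\prime)}}=\chi_\alpha+\chi_\beta+\chi_\gamma+\chi_\delta$, $\chi_{\widehat{P(S_2^\prime)}}=\chi_\gamma+\chi_\delta$ and $\chi_{\widehat{P(S_3^\prime)}}=\chi_\beta+\chi_\delta$ at once. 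For the remaining modules I would exploit that $F$ preserves composition factors, so that $\chi_{\widehat{S_1^\prime}}^\circ=\varphi_{S_1^\prime}$ and $\chi_{\widehat{M_7^\prime}}^\circ=\chi_{\widehat{M_8^\prime}}^\circ=\varphi_{S_1^\prime}+\varphi_{S_2^\prime}+\varphi_{S_3^\prime}$; writing each character as a non-negative integral combination of $\chi_\alpha,\chi_\beta,\chi_\gamma,\chi_\delta$ and matching its reduction through $\mathfrak{D}(B^\prime)$ forces $\chi_{\widehat{S_1^\prime}}=\chi_\alpha$ and $\chi_{\widehat{M_7^\prime}}=\chi_{\widehat{M_8^\prime}}=\chi_\delta$ uniquely, since the only competing combination would involve a negative multiplicity of $\chi_\alpha$.

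The step I expect to be the main obstacle is the character of $\widetilde{M}^\prime$: its Brauer character $2\varphi_{S_1^\prime}+\varphi_{S_2^\prime}+\varphi_{S_3^\prime}$ is compatible with both $\chi_\alpha+\chi_\delta$ and $\chi_\beta+\chi_\gamma$, so $\mathfrak{D}(B^\prime)$ alone does not decide it. To break the tie I would invoke \cref{Lemma_A5_M5_Trivial_Socle}, which gives $\Soc(\widetilde{M})\cong k$; as Morita equivalences preserve socles, $\Soc(\widetilde{M}^\prime)\cong S_1^\prime$, whence $\dim_k\Hom_{kH}(S_1^\prime,\widetilde{M}^\prime)=1$. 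Since $S_1^\prime$ and $\widetilde{M}^\prime$ are both trivial source modules, \cref{Landrock_and_Landrock-Scott}(c) gives $\langle\chi_\alpha,\chi_{\widehat{\widetilde{M}^\prime}}\rangle=1$, which excludes $\chi_\beta+\chi_\gamma$ and proves $\chi_{\widehat{\widetilde{M}^\prime}}=\chi_\alpha+\chi_\delta$. A self-contained alternative: by \cref{Landrock_and_Landrock-Scott}(b) the three projective characters vanish on every involution $x\in D(B^\prime)$, while $\chi_\alpha=\chi_{\widehat{S_1^\prime}}$ and $\chi_\delta=\chi_{\widehat{M_7^\prime}}$ are strictly positive there because $S_1^\prime,M_7^\prime$ have full vertex $D(B^\prime)$; as $\chi_\gamma+\chi_\delta$ and $\chi_\beta+\chi_\delta$ vanish at $x$, this forces $\chi_\beta(x)=\chi_\gamma(x)=-\chi_\delta(x)<0$, so $\chi_\beta+\chi_\gamma$ cannot be a trivial source character, these being non-negative on $p$-elements by \cref{Landrock_and_Landrock-Scott}(a).

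Finally, part~(c) I would settle by degrees. From $\mathfrak{D}(B^\prime)$ the degrees of $\chi_\alpha,\chi_\beta,\chi_\gamma,\chi_\delta$ are $\dim_k(S_1^\prime)$, $\dim_k(S_1^\prime)+\dim_k(S_3^\prime)$, $\dim_k(S_1^\prime)+\dim_k(S_2^\prime)$ and $\dim_k(S_1^\prime)+\dim_k(S_2^\prime)+\dim_k(S_3^\prime)$. Since every simple dimension is at least $1$, the character $\chi_\alpha$ is the unique one of strictly smallest degree and $\chi_\delta$ the unique one of strictly largest degree, so both are singled out from the bare character values. The positivity criterion of the previous paragraph gives an equivalent intrinsic description: $\{\chi_\alpha,\chi_\delta\}$ is exactly the set of irreducible characters of $B^\prime$ taking positive values on $D(B^\prime)\setminus\TrivialGroup$, and $\chi_\delta$ is the one of larger degree among them. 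This establishes the theorem.
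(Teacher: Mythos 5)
Your proposal is correct and follows the paper's strategy in all essentials: transport the classification of \cref{PropertiesTS_A5_modules} across the splendid Morita equivalence via \cref{Omnibus_properties}(f), read the projective characters off the (Morita-invariant) decomposition matrix, and settle $\chi_{\widehat{\widetilde{M}^\prime}}=\chi_\alpha+\chi_\delta$ by combining $\Soc(\widetilde{M}^\prime)\cong S_1^\prime$ (from \cref{Lemma_A5_M5_Trivial_Socle}) with \cref{Landrock_and_Landrock-Scott}(c). The one place where you genuinely diverge is $M_7^\prime$ and $M_8^\prime$: the paper identifies their socles as $S_2^\prime$ and $S_3^\prime$ (transported from $\AlternatingGroup_5$) to get $\dim_k\Hom_{kH}(S_1^\prime,M_i^\prime)=0$ and hence $\chi_{\widehat{M_i^\prime}}=\chi_\delta$, whereas you observe that $\chi_\delta$ is the \emph{only} non-negative integral combination of $\Irr_K(B^\prime)$ whose reduction modulo $p$ equals $\varphi_{S_1^\prime}+\varphi_{S_2^\prime}+\varphi_{S_3^\prime}$; your version is slightly more economical since it needs no structural information about $M_7,M_8$ beyond their composition factors. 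Your alternative, vertex-based exclusion of $\chi_\beta+\chi_\gamma$ and the explicit degree argument for part (c) are both sound and make explicit what the paper leaves as ``follows from the decomposition matrix.''
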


\begin{proof}
By \cref{Omnibus_properties}(f) both all trivial source modules and their respective vertices are preserved under splendid Morita equivalences. Moreover, every Morita equivalence preserves projective indecomposable modules and simple modules. This proves (a). Decomposition matrices are preserved by Morita equivalences. Hence, the matrix 
\begin{table}[H]
	\centering
    	{\scalebox{0.8}{
	\(\begin{array}{c|cccc}
		\hline
		&  \varphi_{S_1^\prime} & \varphi_{S_2^\prime} & \varphi_{S_3^\prime}\\
		\hline
		\chi_\alpha & 1 & 0 & 0\\
		\chi_\beta & 1 & 0 & 1\\
		\chi_\gamma & 1 & 1 & 0\\
		\chi_\delta & 1 & 1 & 1\\
		\hline
\end{array}\)
}}
\end{table}
\noindent is the decomposition matrix of $B'$. As the composition factors of the trivial source $k\AlternatingGroup_5$-module~$M_5$ are given by
$$[M_5] = 2\cdot S_1 + S_2 + S_3,$$
\noindent we deduce that the composition factors of $M_5^\prime$ are given by
$$[M_5^\prime] = 2\cdot S_1^\prime + S_2^\prime + S_3^\prime,$$
following the order of the decomposition matrix. Moreover, the respective composition factors of the projective indecomposable $B^\prime$-modules are given as follows:
\begin{align*}
	[P(S_1^\prime)]&=4\cdot S_1^\prime + 2\cdot S_2^\prime + 2\cdot S_3^\prime,\\
	[P(S_2^\prime)]&=2\cdot S_1^\prime + 2\cdot S_2^\prime + 1\cdot S_3^\prime,\\
	[P(S_3^\prime)]&=2\cdot S_1^\prime + 1\cdot S_2^\prime + 2\cdot S_3^\prime.
\end{align*}

\noindent It follows from the decomposition matrix of $B^\prime$ that $\chi_{S_1^\prime} = \chi_\alpha$. By \cref{Lemma_A5_M5_Trivial_Socle}, we have $\Soc(M_5^\prime)\cong S_1^\prime$ and, therefore, $\dim_k(\Hom_{kH}(S_1^\prime, M_5^\prime))=1$.
\noindent Hence, $\langle \chi_\alpha, \chi_{\widehat{M_5^\prime}} \rangle =1$. It follows from the composition factors of $M_5^\prime$ and from the decomposition matrix of $B'$ that~$\chi_{\widehat{M_5^\prime}} = \chi_\alpha + \chi_\delta$. We have $[M_7^\prime] = [M_8^\prime] = S_1^\prime + S_2^\prime + S_3^\prime$. The socles of the two trivial source $B'$-modules $M_7^\prime$ and $M_8^\prime$ are $S_2^\prime$ and $S_3^\prime$, respectively, since the socles of their preimages under $F$ are $S_2$ and $S_3$, respectively. Hence, $\dim_k(\Hom_{kH}(S_1^\prime, M_7^\prime)) = 0 = \dim_k(\Hom_{kH}(S_1^\prime, M_8^\prime))$. Therefore, $\chi_{\widehat{M_7^\prime}} = \chi_\delta = \chi_{\widehat{M_8^\prime}}$. The assertions in $(c)$ follow from the decomposition matrix of $B^\prime$.
\end{proof}

\begin{rem}\label{CharDegreesDeterminePuigClass}

    Note that the (splendid) Morita equivalence class of a block $b$ with Klein-four defect groups is already determined by the character degrees: if $\Irr_K(b)=\{\chi_\alpha, \chi_\beta, \chi_\gamma, \chi_\delta\}$ then, by the decomposition matrices and by the previous discussion, precisely the following cases can occur.
    \begin{enumerate}
        \item Suppose that there exists a unique largest character degree, say $\chi_\delta(1)$. Then we have either $\chi_\delta(1) = \chi_\beta(1) + \chi_\gamma(1) + \chi_\alpha(1)$ --- in which case $b$ is splendidly Morita equivalent to~$k\AlternatingGroup_4$ --- or $\chi_\delta(1) = \chi_\beta(1) + \chi_\gamma(1) - \chi_\alpha(1)$ --- in which case $b$ is splendidly Morita equivalent to $B_0(k\AlternatingGroup_5)$; 
        \item Suppose that there does not exist a unique largest character degree in $\Irr_K(b)$. Then $b$ is splendidly Morita equivalent to $kV_4$. 
    \end{enumerate}
 
\end{rem}

\section{Two examples}
Consider the group algebra $kG$, where $k$ is an algebraically closed field of characteristic $2$. The main aim of this final section is to illustrate, by means of two examples, how the results from \cref{sec:TS_characters_in_blocks_with_Klein_four_defect_groups} can be applied. Firstly, a certain family of finite group algebras which are of domestic representation type: we let $G=D_{4v}$, the dihedral group of order $4v$, where $v$ is an odd integer larger than~$1$. It follows that the principal block of the group algebra~$kG$ is splendidly Morita equivalent to the group algebra $kV_4$ in this case (see \cref{PIMsOfD4v}). Secondly, we consider a group algebra $kG$ of domestic representation type where $G$ is a concrete group of order $972$. It turns out that $B_0(kG)$ is splendidly Morita equivalent to the algebra $k\AlternatingGroup_4$ in this case (see \cref{Cor:B_0(G)_is_splendidly_Morita_equivalent_to_kA_4}).

\subsection{Example 1: the dihedral groups \texorpdfstring{$D_{4v}\ (v\in\mathbb{Z}_{\geq 3}\ \textup{odd})$}{D4v (vEZ>3 odd)}}
In this subsection we assume that $G=D_{4v} = \langle r,s \ |\ r^{2v}=s^2={(sr)^2}=1\rangle$, the dihedral group of order $4v$, where $v\geq 3$ is an odd integer. Set $\omega_m := \exp{(- \frac{2\pi i m}{2v})}$. The $v+3$ conjugacy classes of $D_{4v}$ are given by

\begin{table}[H]
	\centering
	{\resizebox{\textwidth}{!}{
	\begin{tabular}{@{}l@{}l@{}l@{}l@{}l@{}l@{}l@{}l@{}l@{}l@{}l@{}l@{}l@{}l@{}l@{}l@{}l@{}l@{}}
		\(\begin{array}{ccccc}
   [1] = \{1\};\ & [r] = \{r,r^{-1}\};\ & [r^2] = \{r^2,r^{-2}\};\ & \cdots & [r^{v-1}] = \{r^{v-1},r^{-(v-1)}\}; \\
   \\
   \phantom{D}[r^{v}] = \{r^{v}\};\phantom{D} & 
   \multicolumn{2}{l}{[s] =\{sr^{2a}\ |\ 1\leq a\leq v\};} & \multicolumn{2}{r}{[sr] =\{sr^{2a-1}\ |\ 1\leq a\leq v\}.}\\
		\end{array}\)
	\end{tabular}
}}
\end{table}

\noindent The $\frac{v+1}{2}$ distinct $2'-$conjugacy classes of $D_{4v}$ are given by $[1], [r^2], [r^4], \dots, [r^{v-1}]$.\par\bigskip
\noindent The ordinary character table of $G$ is as given in \cref{table:Ordinary_ct_D_4v} (cf. \cite[§18.3]{JamesLiebeck}).

\begin{table}[ht]
	\centering
	{\resizebox{12cm}{!}{\renewcommand{\arraystretch}{1.1}		
			\begin{tabular}{| c | c c c c c c c c c|}
				\hline
				$g$ & $1$ & $r$ & $r^2$ & $r^3$ & $\cdots$ & $r^{v-1}$ & $r^v$ & $s$ & $sr$\\
				\hline
				$\chi_1$ & $1$ & $1$ & $1$ & $1$ & $\cdots$ & $1$ & $1$ & $1$ & $1$\\
				$\chi_2$ & $1$ & $-1$ & $1$ & $-1$ & $\cdots$ & $1$ & $-1$ & $1$ & $-1$\\
				$\chi_3$ & $1$ & $1$ & $1$ & $1$ & $\cdots$ & $1$ & $1$ & $-1$ & $-1$\\
				$\chi_4$ & $1$ & $-1$ & $1$ & $-1$ & $\cdots$ & $1$ & $-1$ & $-1$ & $1$\\
				$\chi_5$ & $2$ & $\omega_1 + \overline{\omega}_1$ & $\omega_1^2 + \overline{\omega}_1^2$ & $\omega_1^3 + \overline{\omega}_1^3$ & $\cdots$ & $\omega_1^{v-1} + \overline{\omega}_1^{v-1}$ & $\omega_1^v + \overline{\omega}_1^v$ & $0$ & $0$\\
				$\chi_6$ & $2$ & $\omega_2 + \overline{\omega}_2$ & $\omega_2^2 + \overline{\omega}_2^2$ & $\omega_2^3 + \overline{\omega}_2^3$ & $\cdots$ & $\omega_2^{v-1} + \overline{\omega}_2^{v-1}$ & $\omega_2^v + \overline{\omega}_2^v$ & $0$ & $0$\\
				$\vdots$ & $\vdots$ & $\vdots$ & $\vdots$ & $\vdots$ & $\cdots$ & $\vdots$ & $\vdots$ & $\vdots$ & $\vdots$\\
				$\chi_{v+3}$ & $2$ & $\omega_{v-1} + \overline{\omega}_{v-1}$ & $\omega_{v-1}^2 + \overline{\omega}_{v-1}^2$ & $\omega_{v-1}^3 + \overline{\omega}_{v-1}^3$ & $\cdots$ & $\omega_{v-1}^{v-1} + \overline{\omega}_{v-1}^{v-1}$ & $\omega_{v-1}^v + \overline{\omega}_{v-1}^v$ & $0$ & $0$\\
				\hline
	\end{tabular}	}}
	\caption{Ordinary character table of $D_{4v}$ for odd $v\geq 3$}
	\label{table:Ordinary_ct_D_4v}

\end{table}

\noindent We set $Q_1 := \TrivialGroup, Q_2 := \langle sr^v \rangle, Q_3 := \langle r^v \rangle, Q_4 := \langle s \rangle$, and $Q_5 := \langle s, r^v \rangle\in \textup{Syl}_2(D_{4v})$. Furthermore, we choose $\mathscr{S}_2(D_{4v}) = \{Q_1, Q_2, Q_3, Q_4, Q_5\}$. The lattice of subgroups in~$\mathscr{S}_2(D_{4v})$~is:

$$
\resizebox{0.2\textwidth}{!}{
\xymatrix{ & Q_5 & \\
	Q_2 \ar@{-}[ur] & Q_3 \ar@{-}[u] & Q_4 \ar@{-}[ul] \\
	& Q_1 \ar@{-}[ul] \ar@{-}[u] \ar@{-}[ur]	& \ \phantom{Q_4}\ . }
    }
    $$

\begin{lem}
The following assertions hold:
\begin{table}[H]
	\centering
	{\resizebox{\textwidth}{!}{
	\begin{tabular}{@{}l@{}l@{}l@{}l@{}l@{}l@{}l@{}l@{}l@{}l@{}l@{}l@{}l@{}l@{}}
		\(\begin{array}{ccccc}
   N_{D_{4v}}(Q_1) = {D_{4v}};\ & N_{D_{4v}}(Q_2) = {Q_5};\ & N_{D_{4v}}(Q_3) = {D_{4v}};\ & N_{D_{4v}}(Q_4) = {Q_5};\ & N_{D_{4v}}(Q_5) = {Q_5};\\ [0,5ex]
   \overline{N}_{D_{4v}}(Q_1) \cong {D_{4v}}; & \overline{N}_{D_{4v}}(Q_2) \cong C_2; & \overline{N}_{D_{4v}}(Q_3) \cong D_{2v}; & \overline{N}_{D_{4v}}(Q_4) \cong C_2; & \overline{N}_{D_{4v}}(Q_5) \cong Q_1.\\
		\end{array}\)
	\end{tabular}
}}
\end{table}
\end{lem}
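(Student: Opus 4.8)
The plan is to read off all five rows from the standard presentation and the conjugation rules in $G = D_{4v}$. Writing each element uniquely as $r^j$ or $sr^j$ with $0 \le j < 2v$, the relation $sr^is = r^{-i}$ yields the two formulas
$$r^j (sr^i) r^{-j} = sr^{\,i-2j}, \qquad (sr^k)(sr^i)(sr^k)^{-1} = sr^{\,2k-i},$$
while rotations commute and $r^v$ is central (since $D_{4v}$ is dihedral of even index $2v$, its centre is $\langle r^v\rangle = Q_3$). The one arithmetic fact I would invoke repeatedly is that $v$ is odd: an equation of the form $2x \equiv v \pmod{2v}$ has no solution, its left side being even while $v$ is odd modulo $2v$.

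The cases $Q_1$ and $Q_3$ are immediate. For $Q_1 = \TrivialGroup$ one has $N_G(Q_1) = G$ and $\overline{N}_G(Q_1) = G/\TrivialGroup \cong D_{4v}$. For $Q_3 = \langle r^v\rangle = Z(G)$, centrality gives $N_G(Q_3) = G$; in the quotient $G/\langle r^v\rangle$ the image of $r$ has order $v$ (as $r^j \in \{1, r^v\}$ forces $j \equiv 0 \pmod v$) while the image of $s$ remains an involution inverting it, so $\overline{N}_G(Q_3) \cong \langle \bar r, \bar s \mid \bar r^v = \bar s^2 = (\bar s\bar r)^2 = 1\rangle = D_{2v}$.

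Each of $Q_2, Q_4$ is generated by a single involution $t$ (namely $sr^v$ and $s$), so $N_G(\langle t\rangle) = C_G(t)$. Applying the two conjugation formulas with $i \in \{v, 0\}$, I would check that $r^j$ centralises $t$ exactly when $j \equiv 0 \pmod v$ (the alternative $2j \equiv v$ being excluded by the oddness of $v$), and that the reflections centralising $t$ are precisely $sr^i$ and $sr^{\,i+v}$. In both cases this gives $C_G(t) = \{1, r^v, s, sr^v\} = Q_5$, whence $N_G(Q_2) = N_G(Q_4) = Q_5$ and the quotients $\overline{N}_G(Q_2) = Q_5/Q_2$, $\overline{N}_G(Q_4) = Q_5/Q_4$ have order $2$, i.e. are $\cong C_2$.

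Finally, for the Sylow $2$-subgroup $Q_5 = \langle s, r^v\rangle \cong V_4$, I would prove $N_G(Q_5) = Q_5$. The cleanest route is Sylow counting: every Sylow $2$-subgroup has the form $\{1, r^v, sr^i, sr^{\,i+v}\}$ and depends only on $i$ modulo $v$, so there are exactly $v$ of them; then $[G : N_G(Q_5)] = v$ forces $|N_G(Q_5)| = 4 = |Q_5|$. Alternatively one argues directly that a normalising element must fix the central involution $r^v$ and permute $\{s, sr^v\}$, and the conjugation formulas (again using that $2x \equiv v \pmod{2v}$ is unsolvable) show only elements of $Q_5$ can do so. Either way $\overline{N}_G(Q_5) = Q_5/Q_5 \cong Q_1$. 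The only point where genuine care is needed is this recurring appeal to the parity of $v$; everything else is routine bookkeeping with the two conjugation formulas.
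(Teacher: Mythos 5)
Your proposal is correct and matches the paper, whose entire proof is ``Direct computation'': you have simply carried that computation out explicitly, and the conjugation formulas, the centralizer/normalizer identifications for the involution-generated subgroups, the Sylow count for $Q_5$, and the repeated use of the unsolvability of $2x\equiv v\pmod{2v}$ for odd $v$ are all accurate. One tiny remark: for $Q_2$ and $Q_4$ the parity of $v$ is not actually needed, since the normalizer of a subgroup of order $2$ is automatically the centralizer of its generator and the condition there is just $2j\equiv 0\pmod{2v}$; the parity argument is genuinely needed only for $N_{D_{4v}}(Q_5)$.
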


\begin{proof}
    Direct computation.
\end{proof}

\begin{nota}
We denote the $\frac{v+1}{2}$ simple $kG$-modules by $S_1:=k, S_2, \ldots , S_{\frac{v+1}{2}}$. Here,
$$\chi_1^\circ =  \varphi_k,\ \chi_5^\circ = \varphi_{S_2},\ \chi_6^\circ = \varphi_{S_3},\ \chi_7^\circ = \varphi_{S_4},\ \ldots,\ \chi_{\frac{v+7}{2}}^\circ = \varphi_{S_{\frac{v+1}{2}}}.$$ 
\end{nota}
\noindent We set $\widehat{C}:=\sum_{g\in C}^{}{g}$ for every conjugacy class $C$ of $G$. Recall that for each $\chi\in\Irr(G)$ there exists (e.g. by \cite[Theorem 2.3.2]{LuxPah}) for any conjugacy class $C$ of $G$ a central character 
$$\omega_\chi: Z(KG)\rightarrow K\quad\textup{with}\quad \widehat{C} \mapsto \omega_\chi(\widehat{C})=\frac{|C|}{\chi(1)}\chi(g).$$

\begin{lem}
The following assertions hold for the central characters of $D_{4v}$.\par\smallskip
\begin{tabular}{@{}p{0.495\textwidth}@{\hspace{0,93em}}p{0.458\textwidth}@{}}
\parbox[t]{\linewidth}{
\hspace{1em}\textup{(a)} For $1\le t \le 4$ we have:\small
\begin{enumerate}[label=\textup{(\roman*)}, leftmargin=1em, labelsep=0.3em, topsep=0pt, itemsep=0.2em]
		\item $\omega_{\chi_t}(\widehat{[1_G]}) = 1$ and $\omega_{\chi_t}(\widehat{[r^v]}) = \frac{1}{2} ({(-1)}^{t+1})$;
  		\item $\omega_{\chi_t}(\widehat{[r^u]})=2\cdot {(-1)}^{u(t+1)}\quad (1\leq u\leq v-1)$;
		\item $\omega_{\chi_t}(\widehat{[s]}) = {(-1)}^{1+ \left \lceil{{\frac{t}{2}}}\right \rceil}v$ and $\omega_{\chi_t}(\widehat{[sr]})={(-1)}^{\left \lfloor{{\frac{t}{2}}}\right \rfloor}v$. 
\end{enumerate}
}
&
\parbox[t]{\linewidth}{
\hspace{1em}\textup{(b)} For $5\leq t\leq v+3$ we have:\small
\begin{enumerate}[label=\textup{(\roman*)}, leftmargin=1em, labelsep=0.3em, topsep=0pt, itemsep=0.2em]
		\item $\omega_{\chi_t}(\widehat{[1_G]}) = 1$ and $\omega_{\chi_t}(\widehat{[r^v]}) = \frac{1}{2} (\omega_{t-4}^{u} + {\overline{\omega}_{t-4}^{u}})$;
  		\item $\omega_{\chi_t}(\widehat{[r^u]}) = \omega_{t-4}^{u} + {\overline{\omega}_{t-4}^{u}}\quad (1\leq u\leq v-1)$;
		\item $\omega_{\chi_t}(\widehat{[s]}) = \omega_{\chi_t}(\widehat{[sr]}) = 0$. 
\end{enumerate}
}
\end{tabular}

\end{lem}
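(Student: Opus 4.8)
The plan is to derive every entry by direct substitution into the central character formula $\omega_\chi(\widehat{C}) = \frac{|C|}{\chi(1)}\chi(g)$ recalled above, using only the class sizes from the conjugacy-class table, namely $|[1_G]| = 1$, $|[r^u]| = 2$ for $1 \le u \le v-1$, $|[r^v]| = 1$ and $|[s]| = |[sr]| = v$, together with the degrees $\chi_t(1) = 1$ for $1 \le t \le 4$ and $\chi_t(1) = 2$ for $5 \le t \le v+3$ read off from Table~\ref{table:Ordinary_ct_D_4v}. I would dispose of part (b) first, since it involves no sign bookkeeping: for $5 \le t \le v+3$ the degree is $2$, the tabulated values at $s$ and $sr$ vanish, and the value at $r^u$ is $\omega_{t-4}^u + \overline{\omega}_{t-4}^u$, so the ratios $|C|/\chi_t(1)$ --- equal to $\tfrac12$ on $[1_G]$ and $[r^v]$, to $1$ on $[r^u]$, and to $\tfrac v2$ on $[s]$ and $[sr]$ --- immediately produce the three asserted identities.

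For part (a) the substitution is equally mechanical once the four linear characters are rewritten with uniform sign-exponents. The one point that needs attention is to confirm, by inspecting the rows $t = 1, 2, 3, 4$ of Table~\ref{table:Ordinary_ct_D_4v} one at a time, the closed forms
$$\chi_t(r^u) = (-1)^{u(t+1)}, \qquad \chi_t(r^v) = (-1)^{t+1}, \qquad \chi_t(s) = (-1)^{1+\lceil t/2\rceil}, \qquad \chi_t(sr) = (-1)^{\lfloor t/2\rfloor}.$$
Each of these is a one-line parity check: for example $\chi_2(r^u) = (-1)^u = (-1)^{3u}$ matches $t+1 = 3$, and the ceiling and floor exponents reproduce the sign patterns $(+,+,-,-)$ and $(+,-,-,+)$ appearing in the $s$- and $sr$-columns. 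Feeding these into the formula and multiplying by the relevant ratio $|C|/\chi_t(1)$ then yields the values in (a).

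I do not expect any genuine obstacle here, as the whole argument is a routine evaluation of a known formula against a known table. The only care required is the sign bookkeeping of part (a): checking that the four individual $\pm 1$ rows of the linear-character block are simultaneously encoded by the single exponents $u(t+1)$, $t+1$, $1+\lceil t/2\rceil$ and $\lfloor t/2\rfloor$, and keeping track of the parity of $v$ (odd) wherever it enters.
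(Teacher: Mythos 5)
Your proposal is correct and takes essentially the same route as the paper, whose proof is literally ``direct computation using the character table'': you substitute the class sizes $1,2,\dots,2,1,v,v$ and the tabulated character values into $\omega_\chi(\widehat{C})=\frac{|C|}{\chi(1)}\chi(g)$, and your closed forms $(-1)^{u(t+1)}$, $(-1)^{1+\lceil t/2\rceil}$, $(-1)^{\lfloor t/2\rfloor}$ for the linear characters check out. One remark: for $1\leq t\leq 4$ your computation at $[r^v]$ gives $\frac{|[r^v]|}{\chi_t(1)}\chi_t(r^v)=(-1)^{t+1}$ rather than the stated $\frac{1}{2}(-1)^{t+1}$ (a central character value must be an algebraic integer, so the factor $\frac{1}{2}$ in part (a)(i) of the statement appears to be a typo carried over from part (b)(i)); your method is right, but you should not claim it reproduces that entry as printed.
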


\begin{proof}
    Direct computation using \cref{table:Ordinary_ct_D_4v}.
\end{proof}

\begin{prop}\label{PIMsOfD4v}
\begin{enumerate}[label=\textup{(\alph*)}]
        \item We have $|\Bl_2(kG)|=\frac{v+1}{2}$.
	\item The block algebra $B_0(kG)$ is splendidly Morita equivalent to the algebra $kV_4$, we have $\Irr(B_0(kG))=\{\chi_1,\chi_2, \chi_3, \chi_4\}$, and the decomposition matrix $\mathfrak{D}(B_0(kG))$ is
\begin{table}[H]
	\centering
    	{\scalebox{0.8}{
	\(\begin{array}{c|c}
		\hline
		&  1_{G_{p^\prime}}\\
		\hline
		\chi_1 & 1\\
		\chi_2 & 1\\
		\chi_3 & 1\\
		\chi_4 & 1\\
		\hline
\end{array}\)
}}
\end{table}
	\item For each $i\in \{2,\ldots, \frac{v+1}{2}\}$, we have a block $B_{i-1}$ such that $\Irr(B_{i-1}) = \{\chi_{i+3}, \chi_{v+5-i}\}$ and the decomposition matrix $\mathfrak{D}(B_{i-1}(kG))$ is
\begin{table}[H]
	\centering
    	{\scalebox{0.8}{
	\(\begin{array}{c|c}
		\hline
	&  \varphi_{S_i}\\
		\hline
		\chi_{i+3} & 1\\
		\chi_{v+5-i} & 1\\
		\hline
	\end{array}\)
    }}
\end{table}
	\item The projective indecomposable characters of $kG$ are:
	$$\Phi_{\varphi_k} = \chi_1 + \chi_2 + \chi_3 + \chi_4,\quad \Phi_{\varphi_{S_i}}= \chi_{i+3} + \chi_{v+5-i},\ \textup{for}\ 2\leq i\leq \frac{v+1}{2}.$$	
\end{enumerate}
\end{prop}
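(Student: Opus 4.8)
The plan is to determine the full $2$-block distribution of $\Irr(G)$ by means of Brauer's characterisation of blocks: two ordinary irreducible characters $\chi,\chi'$ lie in the same $2$-block if and only if $\omega_\chi(\widehat{C})\equiv\omega_{\chi'}(\widehat{C})\pmod{\mathfrak{p}}$ for every $p$-regular class sum $\widehat{C}$, that is, for each of the $\frac{v+1}{2}$ classes $[1],[r^2],[r^4],\dots,[r^{v-1}]$. Every value needed is supplied by the central characters computed in the lemma above, so the core of the argument is a congruence calculation. First I would dispose of the four linear characters: for $1\le t\le 4$ the central characters give $\omega_{\chi_t}(\widehat{[r^u]})=2\,(-1)^{u(t+1)}$ and $\omega_{\chi_t}(\widehat{[1_G]})=1$. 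As $\mathfrak{p}$ lies over $2$ we have $2\equiv 0\pmod{\mathfrak{p}}$, so each $\chi_t$ has reduced central character equal to $1$ at $[1_G]$ and $0$ at every $2$-regular rotation class. Hence $\chi_1,\chi_2,\chi_3,\chi_4$ share a reduced central character and lie in a common block; containing the trivial character, it is $B_0(kG)$, and $\Irr(B_0(kG))=\{\chi_1,\chi_2,\chi_3,\chi_4\}$.

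Next I would treat the degree-two characters $\chi_{4+m}$ $(1\le m\le v-1)$. Writing a $2$-regular rotation class as $[r^{2w}]$ with $1\le w\le\frac{v-1}{2}$ and setting $\eta:=\exp(\tfrac{2\pi i}{v})$, the central characters reduce to the value $\eta^{mw}+\eta^{-mw}\bmod\mathfrak{p}$, together with $1$ at $[1_G]$. The key arithmetic input is that $v$ is odd, so $x^{v}-1$ is separable modulo $2$ and $\overline{\eta}$ has order exactly $v$ in $k^\times$. Using this I would argue, by finite Fourier inversion over $k$ (legitimate because $v$ is invertible in $k$), that the function $w\mapsto \eta^{mw}+\eta^{-mw}\bmod\mathfrak{p}$ recovers the unordered pair $\{m,-m\}\bmod v$. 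Consequently $\chi_{4+m}$ and $\chi_{4+m'}$ lie in the same block exactly when $m'\equiv\pm m\pmod v$, i.e. $m'=m$ or $m'=v-m$; moreover none of them lies in $B_0(kG)$, since the value at $[r^2]$ is $\overline{\eta}^{m}+\overline{\eta}^{-m}\neq 0$ (as $\overline{\eta}^{2m}\neq 1$). This groups the degree-two characters into the $\frac{v-1}{2}$ pairs $\{\chi_{i+3},\chi_{v+5-i}\}$ (take $m=i-1$, whence $v-m=v+1-i$) for $2\le i\le\frac{v+1}{2}$, giving $1+\frac{v-1}{2}=\frac{v+1}{2}$ blocks in all. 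This settles part (a) and the character distributions claimed in (c). I expect this congruence step, resting on the separability of $x^v-1$, to be the main obstacle; the remainder is bookkeeping.

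It then remains to identify the Morita type and read off the matrices. The block $B_0(kG)$ has the Sylow $2$-subgroup $Q_5\cong V_4$ as a defect group, and it has a unique simple module: since $G^{\mathrm{ab}}\cong V_4$ is a $2$-group, the only one-dimensional $kG$-module is the trivial one, so all four linear characters reduce to $\varphi_k$ and $B_0(kG)$ carries a single irreducible Brauer character. By \cref{KleinFourDefectGroupsCravenEatonLinckelmannK}, $B_0(kG)$ is splendidly Morita equivalent to $kV_4$, $k\AlternatingGroup_4$, or $B_0(k\AlternatingGroup_5)$; as the latter two have three simple modules (cf. \cref{PropertiesTS_A4_modules}), the number of simple modules forces the equivalence with $kV_4$, and the decomposition matrix of $B_0(kG)$ is the single column with all entries equal to $1$. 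This proves (b).

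Finally, each pair-block $B_{i-1}$ has a defect group of order $2$: a degree-two character has defect $1$ because $|G|_2=4$, so the defect group is cyclic of order $2$. By \cref{BrauerTreeTrivialSourceModules} such a block contains exactly two ordinary characters, namely $\chi_{i+3}$ and $\chi_{v+5-i}$, a single simple module with Brauer character $\varphi_{S_i}$, and decomposition matrix the column $\bigl(\begin{smallmatrix}1\\1\end{smallmatrix}\bigr)$; this completes (c). Summing the columns of the decomposition matrices obtained in (b) and (c) yields the projective indecomposable characters $\Phi_{\varphi_k}=\chi_1+\chi_2+\chi_3+\chi_4$ and $\Phi_{\varphi_{S_i}}=\chi_{i+3}+\chi_{v+5-i}$ for $2\le i\le\frac{v+1}{2}$, which is (d).
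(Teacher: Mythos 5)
Your proposal is correct, but it takes a genuinely different route from the paper's. For part (a) the paper observes that $Z(N_G(Q_5))=Q_5$, so $G$ is $2$-nilpotent by Burnside's transfer theorem; the characterisation of $p$-nilpotency via projective covers then makes the Cartan matrix of $kG$ diagonal, which in one stroke gives $|\Bl_2(kG)|=\frac{v+1}{2}$, forces every decomposition matrix to be a single all-ones column, and—for part (b)—eliminates $k\AlternatingGroup_4$ and $B_0(k\AlternatingGroup_5)$ from the trichotomy of \cref{KleinFourDefectGroupsCravenEatonLinckelmannK}, since those algebras do not have diagonal Cartan matrices. You instead read the block distribution directly off the reduced central characters on $2$-regular class sums, separating blocks by a Fourier-inversion argument over $k$ resting on the separability of $x^v-1$ modulo $2$, and you rule out the $k\AlternatingGroup_4$ and $B_0(k\AlternatingGroup_5)$ cases by counting simple modules in $B_0(kG)$ (one, since $G^{\mathrm{ab}}\cong V_4$ is a $2$-group). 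For part (c) the paper only verifies $\chi_{4+m_1}^{\circ}=\chi_{4+(v-m_1)}^{\circ}$ and then uses the block count from (a) to conclude that these linkage classes are precisely the blocks, whereas your computation delivers the grouping and the separation simultaneously, after which the defect-one observation combined with \cref{BrauerTreeTrivialSourceModules} yields the decomposition matrices. Your route is self-contained at the level of character values and avoids Burnside's transfer theorem, at the price of an explicit congruence computation and of the standard (but uncited) fact that block membership is already detected by central characters evaluated on $p$-regular class sums alone. Two minor points you should make explicit to close the argument: the function $w\mapsto\overline{\eta}^{mw}+\overline{\eta}^{-mw}$ is even and $v$-periodic, which is what extends agreement on the $\frac{v+1}{2}$ listed classes to all of $\mathbb{Z}/v\mathbb{Z}$ before inverting; and $m\not\equiv -m\pmod v$ for $1\le m\le v-1$ because $v$ is odd, so no characteristic-two cancellation occurs in the coefficient vector whose support you recover.
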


\begin{proof}
\begin{enumerate}[label=\textup{(\alph*)}]
        \item Note first that 
        the group $G$ is $2$-nilpotent, as $G\cong C_{2v}\rtimes C_2$. The group $G$ has a normal~$2$-complement if and only if for every simple $kG$-module $S$, the composition factors of the projective cover $P(S)$ of $S$ are all isomorphic to~$S$. See \cite[Theorem 8.4.1]{Webb}. Hence, the Cartan matrix of $kG$ is diagonal and we have $|\Bl_2(kG)|=\frac{v+1}{2}$.
        \item As $B_0(kG)$ has the Sylow $2$-subgroups of $G$ as defect groups, we deduce that a defect group of $B_0(kG)$ is $Q_5\cong V_4$. Now, by 
\cref{KleinFourDefectGroupsCravenEatonLinckelmannK}, there exists a splendid Morita equivalence between $B_0(kG)$ and either $kV_4$ or $k\AlternatingGroup_4$ or $B_0(k\AlternatingGroup_5)$. Since the Cartan matrix of $kG$ is diagonal, the only possible shape of the decomposition matrix of $kG$ is as claimed. Hence, $B_0(kG)$ is splendidly Morita equivalent to the algebra $kV_4$. Next, we determine the elements of $\Irr(B_0(kG))$. Since $1_G\in~B_0(kG)$, it follows from \cite[Theorem 4.4.8]{LuxPah} that $\Irr(B_0(kG))=\{\chi_1, \chi_2, \chi_3, \chi_4\}$, as $\chi_1^\circ = \chi_2^\circ = \chi_3^\circ = \chi_4^\circ$.
        \item \noindent Fix $m_1\in \{1,\ldots , \frac{v-1}{2}\}$. Define $m_2:=v-m_1$. Then, $\omega_{m_1}^{j}+ \overline{\omega}_{m_1}^{j} = \omega_{m_2}^{j}+ \overline{\omega}_{m_2}^{j}$ for every even number~$j\in \{1,\ldots , v\}$, since

\begin{table}[H]
    \centering
     {\resizebox{13.8cm}{!}{
    \begin{tabular}{lrclc}
         & $\exp\left(\frac{-2m_1\pi ij}{2v}\right) + \exp\left(\frac{2m_1\pi ij}{2v}\right)$ & $=$ & $\exp\left(\frac{-2(v-m_1)\pi ij}{2v}\right) + \exp\left(\frac{2(v-m_1)\pi ij}{2v}\right)$\\[1,5ex]
        $\Leftrightarrow$ & $2\cdot \cos\left(\frac{2m_1\pi j}{2v}\right)$ & $=$ & $2\cdot \cos\left(\frac{2(v-m_1)\pi j}{2v}\right)$\\[1,5ex]
        $\Leftrightarrow$ & $\cos\left(\frac{m_1\pi j}{v}\right)$ & $=$ & $\cos\left(\frac{(v-m_1)\pi j}{v}\right)$\\[1,5ex]
        $\Leftrightarrow$ & $\cos\left(\frac{m_1\pi j}{v}\right)$ & $=$ & $\cos\left(\pi j - \frac{m_1\pi j}{v}\right)$.        
    \end{tabular}
 }}
\end{table}
\noindent Hence, $\chi_{4+m_1}^{\circ} = \chi_{4+m_2}^{\circ}$ for every choice of $m_1$. As $G$ has precisely $\frac{v+1}{2}$ different $2$-blocks, the claim follows now from \cite[Theorem 4.4.8]{LuxPah}.
\item The assertion follows from Part (b) and Part (c).
\end{enumerate}
\end{proof}

We set $H:=N_G(Q_3)/Q_3 = D_{4v}/Q_3\cong D_{2v}$ and denote the image of an element $g\in D_{4v}$ under the canonical epimorphism $D_{4v}\twoheadrightarrow H$ by $\overline{g}$. We see that $|\Lin(H)|=|H/H^\prime|=2$ and we denote the two linear characters of the group $H$ by $\lambda_1 := 1_H$ and $\lambda_2$.

\begin{prop}\label{TSCT_data_of_D4v}
\begin{enumerate}[label=\textup{(\alph*)}]
	\item The $\frac{v+1}{2}$ distinct $2^\prime$-conjugacy classes of $H$ are:
 $$[\, \overline{1}\,], [\overline{r^2}], [\overline{r^4}],\ldots , [\overline{r^{v-1}}].$$ 
	\item The ordinary characters of the projective indecomposable $kH$-modules are:
	\begin{enumerate}[label=\textup{(\roman*)}]
		\item the projective character $\Phi_{1_{H_{p^\prime}}}:=\lambda_1 + \lambda_2$ where $\lambda_1, \lambda_2\in \textup{Lin}(H)$;
		\item all $\frac{v-1}{2}$ non-linear $\chi_i\in \Irr_K(H)$.
	\end{enumerate}
	\item The ordinary characters of the trivial source $kG$-modules with vertex $Q_3$ are:
	\begin{enumerate}[label=\textup{(\roman*)}]
		\item the character $\chi_1 + \chi_3$;
		\item the $\frac{v-1}{2}$ ordinary irreducible characters $\chi_{6}, \chi_{8},\ldots , \chi_{v+1}, \chi_{v+3}$.
	\end{enumerate}
	\item We have $\TS(G;Q_2)=\{M_1\}, \TS(G;Q_4)=\{M_2\}$, and $\TS(G;Q_5)=\{M_3\}$ for suitable trivial source $kG$-modules $M_1, M_2$, and $M_3$. Moreover, the following assertions hold:

\begin{tabular}[t]{@{}l l l@{}}
\hspace{0,5em}\textup{(i)} $\chi_{\widehat{M_1}} = \chi_1 + \chi_4$;\quad\quad\quad &
\textup{(ii)} $\chi_{\widehat{M_2}} = \chi_1 + \chi_2$;\quad\quad\quad &
\textup{(iii)} $\chi_{\widehat{M_3}} = \chi_1$.
\end{tabular}

\end{enumerate}
\end{prop}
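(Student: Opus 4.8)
The plan is to treat the four parts separately, using throughout that $Q_3=\langle r^v\rangle$ is the centre $Z(G)$ of $G=D_{4v}$ (as $2v$ is even), so that $N_G(Q_3)=G$ and $H=\overline{N}_G(Q_3)=G/Q_3\cong D_{2v}$. For (a) I would observe that the canonical epimorphism $\pi\colon G\twoheadrightarrow H$ has $2$-group kernel $Q_3$; since $Q_3$ is central and consists of $2$-elements, $\pi$ restricts to a bijection between the $2'$-classes of $G$ and those of $H$. Applying this to the list $[1],[r^2],\dots,[r^{v-1}]$ of $2'$-classes of $G$ recorded before \cref{table:Ordinary_ct_D_4v} yields the asserted classes $[\overline 1],[\overline{r^2}],\dots,[\overline{r^{v-1}}]$ of $H$. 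For (b) the point is that a Sylow $2$-subgroup of $H\cong D_{2v}$ is cyclic of order $2$: the two linear characters $\lambda_1,\lambda_2$ coincide on the odd-order rotations, hence have equal restriction to $H_{2'}$ and lie together in the principal block (of defect $C_2$), giving $\Phi_{1_{H_{p'}}}=\lambda_1+\lambda_2$; each of the $\frac{v-1}{2}$ nonlinear irreducible characters has degree $2=|H|_2$, so is of defect zero and is therefore its own projective indecomposable character. This produces exactly $\frac{v+1}{2}$ projective indecomposable characters, matching the count from (a).

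For (c) I would exploit the central vertex $Q_3$ via inflation. If $E$ is a PIM of $kH$, then $E$ is a summand of $kH$, and inflating the $kG$-isomorphism $\Inf_H^G(kH)\cong\Ind_{Q_3}^G(k)$ shows $\Inf_H^G(E)$ is a direct summand of $\Ind_{Q_3}^G(k)$, hence a trivial source $kG$-module. Inflation is fully faithful, so $\End_{kG}(\Inf_H^G E)\cong\End_{kH}(E)$ is local and $\Inf_H^G E$ is indecomposable; its vertex lies in $Q_3$ and cannot be trivial, because $\Res_{Q_3}\Inf_H^G E$ is a nonzero trivial (hence non-free) $kQ_3$-module, so the vertex equals $Q_3$. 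As inflation is faithful on isomorphism classes, the $\frac{v+1}{2}$ modules $\Inf_H^G(E)$ are pairwise non-isomorphic, and since $|\TS(G;Q_3)|=\frac{v+1}{2}$ by \cref{Omnibus_properties}(d) and (a) they exhaust $\TS(G;Q_3)$. Because inflation commutes with reduction and with the trivial source lift, $\chi_{\reallywidehat{\Inf_H^G E}}=\Inf_H^G(\chi_{\widehat E})$; feeding in (b) and reading off the inflations on \cref{table:Ordinary_ct_D_4v} (namely $\Inf_H^G(\lambda_1+\lambda_2)=\chi_1+\chi_3$, and the nonlinear characters inflating to $\chi_6,\chi_8,\dots,\chi_{v+3}$ through $\chi_{4+m}(r)=2\cos(\pi m/v)$) gives (c).

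For (d), \cref{Omnibus_properties}(d) gives $|\TS(G;Q_i)|=1$ for $i\in\{2,4,5\}$ since each $\overline{N}_G(Q_i)$ has a single $2'$-class. For the Sylow vertex $Q_5$ the trivial module $k=\Sc(G,Q_5)$ is a trivial source module with vertex $Q_5$, so $M_3=k$ and $\chi_{\widehat{M_3}}=\chi_1$. For $i\in\{2,4\}$, \cref{Omnibus_properties}(e) identifies $M_i=\Sc(G,Q_i)\in B_0(G)$, a summand of $\Ind_{Q_i}^G(k)$ whose remaining summands have vertex strictly inside $Q_i$, hence are projective. By \cref{Lifting_TS_Commutes_With_Ind} one has $\chi_{\reallywidehat{\Ind_{Q_i}^G(k)}}=\Ind_{Q_i}^G(1)$, which I evaluate by Frobenius reciprocity using $sr^v\in[sr]$ and $s\in[s]$; together with the telescoping identity $\sum_{i=2}^{(v+1)/2}\Phi_{\varphi_{S_i}}=\sum_{i=2}^{(v+1)/2}(\chi_{i+3}+\chi_{v+5-i})=\chi_5+\dots+\chi_{v+3}$ from \cref{PIMsOfD4v}, this yields $\Ind_{Q_2}^G(1)=(\chi_1+\chi_4)+\sum_{i}\Phi_{\varphi_{S_i}}$ and $\Ind_{Q_4}^G(1)=(\chi_1+\chi_2)+\sum_i\Phi_{\varphi_{S_i}}$. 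Since $M_i\in B_0(G)$ and the unique principal-block projective indecomposable character is $\Phi_{\varphi_k}=\chi_1+\chi_2+\chi_3+\chi_4$ (which cannot occur in these sums, as $\chi_2,\chi_3$ resp. $\chi_3,\chi_4$ are absent), the projective complement must be exactly $\bigoplus_i P(S_i)$, leaving $\chi_{\widehat{M_1}}=\chi_1+\chi_4$ and $\chi_{\widehat{M_2}}=\chi_1+\chi_2$.

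The least routine step, and the one I expect to be the crux, is the inflation argument in (c): recognising that inflation along the central quotient $G\to G/Q_3$ carries the projective indecomposable $kH$-modules bijectively onto $\TS(G;Q_3)$, and that it commutes with the trivial source lift so that the sought characters are simply inflations. In (d) the delicate point is separating the Scott summand from its projective complement inside $\Ind_{Q_i}^G(k)$; this is not visible from the induced character alone and is forced only by invoking $\Sc(G,Q_i)\in B_0(G)$ together with the explicit shape of the principal-block projective indecomposable character.
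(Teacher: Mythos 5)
Your proposal is correct, and parts (a)--(c) run essentially parallel to the paper's proof: the paper establishes (a) by listing which $G$-classes fuse in $H$ where you invoke the general $p'$-class bijection along $G\twoheadrightarrow G/Q_3$ (in effect \cref{LemmaDeiml}); for (b) the paper cites $p$-solvability of $H$ to get the identity submatrix of the decomposition matrix where you observe that the nonlinear characters have degree $2=|H|_2$ and hence defect zero; and for (c) both arguments inflate the PIMs of $kH$ and match characters via the kernel condition $Q_3\leq\ker\chi_t$, with your write-up supplying more detail on why the inflations are indecomposable trivial source modules with vertex exactly $Q_3$. Part (d) is where you genuinely diverge. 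The paper deduces from \cref{PIMsOfD4v}(b) and \cref{TS_characters_Blocks_Puig_equivalent_to_kV4} that the three characters $\chi_1+\chi_2$, $\chi_1+\chi_3$, $\chi_1+\chi_4$ must all occur as trivial source characters with nontrivial cyclic vertex, and then assigns them to the vertices $Q_2$ and $Q_4$ using the vanishing criterion of \cref{Landrock_and_Landrock-Scott}(a)\&(b) at $sr^v$ and $s$; you instead compute $\Ind_{Q_i}^G(1)$ by Frobenius reciprocity and strip off the projective part block by block, using that $M_i=\Sc(G,Q_i)$ lies in $B_0(kG)$ and that $\Phi_{\varphi_k}$ cannot be a constituent. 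Your route is more elementary and self-contained (it does not need the main theorem of Section~4, only \cref{Omnibus_properties} and \cref{PIMsOfD4v}), at the cost of an explicit character computation; the paper's route is shorter and illustrates how \cref{TS_characters_Blocks_Puig_equivalent_to_kV4} is meant to be applied. One small step you assert without justification is that the complement of $\Sc(G,Q_i)$ in $\Ind_{Q_i}^G(k)$ is projective: a priori a further summand could again have vertex $Q_i$ and hence be another copy of $M_i$; this is excluded because $\dim_k\Hom_{kG}(k,\Ind_{Q_i}^G(k))=1$ by adjunction while each copy of the Scott module contributes a trivial submodule, so $M_i$ occurs with multiplicity one. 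With that one line added, your argument is complete.
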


\begin{proof}
In the group $H$, the following conjugacy classes of the group $G$ fuse:

\begin{table}[H]
	\centering
	{\resizebox{\textwidth}{!}{
	\begin{tabular}{@{}l@{}l@{}l@{}l@{}l@{}l@{}l@{}l@{}l@{}l@{}l@{}l@{}l@{}l@{}}
		\(\begin{array}{cccccc}
   [1]\ \textup{and}\ [r^v];\ \ & [r]\ \textup{and}\ [r^{v-1}];\ \ & [r^2]\ \textup{and}\ [r^{v-2}];\ \ & \ldots\ & [r^{\frac{v-1}{2}}]\ \textup{and}\ [r^{\frac{v+1}{2}}];\ \ & [s]\ \textup{and}\ [sr].
		\end{array}\)
	\end{tabular}
}}
\end{table}

\noindent Hence, the group $H$ has $2+\frac{v-1}{2}$ conjugacy classes. Since $s$ is a $2$-element, this implies Part (a). Note that a defect group of $B_0(kH)$ is given by a Sylow $2$-subgroup of $H$. Hence, $\mathfrak{D}(B_0(kH))$ is given as follows:
\begin{table}[H]
	\centering
    	{\scalebox{0.8}{
	\(\begin{array}{c|c}
		\hline
	&  1_{H_{p^\prime}}\\
	\hline
	\lambda_1 & 1\\
	\lambda_2 & 1\\
		\hline
\end{array}\)
}}
\end{table}
\noindent Moreover, since $H$ is $p$-solvable, the decomposition matrix of~$kH$ contains an identity matrix of maximum possible size. Hence, (b) follows. The trivial source $kG$-modules with vertices isomorphic to $Q_3$ are obtained by inflation of the projective indecomposable $kH$-modules. The same is true for their ordinary characters. We examine which characters of $\Irr_K(G)$ have $Q_3$ in their kernel. Since
$$\omega_{m}^{v}+ \overline{\omega}_{m}^{v} =\exp\left( \frac{-2m\pi iv}{2v}\right) + \exp\left( \frac{2m\pi iv}{2v}\right) = 2 \cos \left( \frac{2m\pi v}{2v}\right) = 2\cos(m\pi)=2\cdot {(-1)}^m,$$
we deduce that $Q_3$ is in the kernel of $\chi_t$ if and only if~$t\in \{1,3\}\cup \{6, 8, 10, \ldots , v+3\}$. This proves~(c). Recall from \cref{PIMsOfD4v}(b) that the algebra $B_0(kG)$ is splendidly Morita equivalent to the algebra $kV_4$. Hence, all the ordinary characters stated in Part (d) have to occur as ordinary characters of trivial source $B_0(kG)$-modules due to \cref{TS_characters_Blocks_Puig_equivalent_to_kV4}. Note that
$$(\chi_1 + \chi_4)(sr^v) = (\chi_1 + \chi_4)(sr)=2>0\ \textup{and}$$
$$(\chi_1 + \chi_2)(s) = 2>0.$$
\noindent As $|[\overline{N}_G(Q_i)]_{p^\prime}|=1$ for $i\in\{2,4,5\}$, the assertions follow now from \cref{Landrock_and_Landrock-Scott}(a)\&(b).
\end{proof}

\begin{nota}\label{2_Prime_Classes_Of_D_4v}
    For the $2^\prime$-conjugacy classes of the normaliser quotients we make the following choices using \cref{{TSCT_data_of_D4v}}(a):
$${[\overline{N}_{D_{4v}}(Q_1)]}_{2^\prime} = \{1,r^2, r^4, \ldots, r^{v-1}\},\quad {[\overline{N}_{D_{4v}}(Q_2)]}_{2^\prime} = \{1Q_2\},$$
$${[\overline{N}_{D_{4v}}(Q_3)]}_{2^\prime} = \{1Q_3,r^2Q_3,r^4Q_3, \ldots, r^{v-1}Q_3\},\  {[\overline{N}_{D_{4v}}(Q_4)]}_{2^\prime} = \{1Q_4\},\  {[\overline{N}_{D_{4v}}(Q_5)]}_{2^\prime} = \{1Q_5\}.$$
\end{nota}

\begin{thm}\label{Triv_2_D_4v}
Labelling the columns of the first and third block column of $\Triv_2(D_{4v})$ by the elements $1, r^2, r^4, \ldots, r^{v-1}$ and labelling the ordinary characters of $D_{4v}$ as in \cref{table:Ordinary_ct_D_4v}, $\Triv_2(D_{4v})$ is as given in \cref{table:TSCT_D_4v_when_p_is_2}. In particular, we have $T_{3,3} = T_{3,1}$ and $T_{3,1} = \frac{1}{2} \cdot T_{1,1}$ after a suitable permutation of the rows of $\Triv_2(D_{4v})$.
\end{thm}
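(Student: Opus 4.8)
The plan is to compute $\Triv_2(D_{4v})$ block by block in the sense of \cref{conv:tsctbl}, exploiting the block-triangular shape forced by \cref{rem:tsctbl}(c). Since $Q_3=\langle r^v\rangle=Z(D_{4v})$, while $Q_2=\langle sr^v\rangle$ and $Q_4=\langle s\rangle$ are reflections lying in the two \emph{distinct} reflection classes $[sr]$ and $[s]$ (here we use that $v$ is odd, so $sr^v$ is an odd-power reflection), the subgroups $Q_1,\dots,Q_5$ satisfy $Q_v\leq_G Q_i$ exactly in the cases $v=1$ for all $i$; $v\in\{2,3,4\}$ only for $i=v$ and $i=5$; and $v=5$ only for $i=5$. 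Hence, by \cref{rem:tsctbl}(c), $T_{i,v}=\mathbf 0$ unless $(i,v)$ lies in $\{(1,1)\}\cup\{(2,1),(2,2)\}\cup\{(3,1),(3,3)\}\cup\{(4,1),(4,4)\}\cup\{(5,1),\dots,(5,5)\}$, and only these blocks need to be determined.

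First I would fill in the first block column and the diagonal blocks. By \cref{LemmaRickard} applied with $g_p=1$, the entry $\tau^G_{Q_1,s}([M])$ equals $\chi_{\widehat M}(s)$ for every $p'$-class representative $s=r^{2j}$, so $T_{1,1},T_{2,1},T_{3,1},T_{4,1},T_{5,1}$ are read off from \cref{table:Ordinary_ct_D_4v} together with the already-known characters of the trivial source modules: the projective indecomposable characters of \cref{PIMsOfD4v}(d) for $T_{1,1}$; the characters $\chi_1+\chi_4$, $\chi_1+\chi_2$, $\chi_1$ of \cref{TSCT_data_of_D4v}(d) for $T_{2,1},T_{4,1},T_{5,1}$; and the characters of \cref{TSCT_data_of_D4v}(c) for $T_{3,1}$. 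For the diagonal blocks, \cref{rem:tsctbl}(a) identifies $T_{i,i}$ with the table of projective indecomposable characters of $\overline N_i$ at its $p'$-classes: thus $T_{3,3}$ is that of $\overline N_3\cong D_{2v}$, whose projective characters are listed in \cref{TSCT_data_of_D4v}(b), while $T_{2,2}=T_{4,4}=[\,2\,]$ and $T_{5,5}=[\,1\,]$ since $\overline N_2\cong\overline N_4\cong C_2$ and $\overline N_5\cong\{1\}$. Finally, the whole vertex-$Q_5$ row is constant $\tau^G_{Q_v,s}(k)=1$ by \cref{rem:tsctbl}(b), giving $T_{5,v}=(1,\dots,1)$ for every $v$.

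It then remains to prove the two highlighted identities. For $T_{3,3}=T_{3,1}$ I would use that, as established in the proof of \cref{TSCT_data_of_D4v}(c), every $M\in\TS(G;Q_3)$ is an inflation $M=\Inf^G_H(P)$ of a projective indecomposable $kH$-module $P$, where $H=D_{4v}/Q_3\cong D_{2v}$ (note $N_G(Q_3)=D_{4v}$, as $Q_3$ is central). Because $Q_3$ acts trivially on $M$, the Brauer construction gives $M[Q_3]\cong P$ as a $k\overline N_3=kH$-module, so the $(M,\overline{r^{2j}})$-entry of $T_{3,3}$ equals $\varphi_P(\overline{r^{2j}})$. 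On the other hand, by \cref{Omnibus_properties}(a) and uniqueness of the trivial source lift one has $\widehat M=\Inf^G_H(\widehat P)$, hence $\chi_{\widehat M}=\Inf^G_H(\chi_{\widehat P})$, and therefore the $(M,r^{2j})$-entry of $T_{3,1}$ equals $\chi_{\widehat M}(r^{2j})=\chi_{\widehat P}(\overline{r^{2j}})=\varphi_P(\overline{r^{2j}})$, the last step being the standard identity relating an ordinary character to its Brauer character on $p'$-elements. Under the column identification $r^{2j}\leftrightarrow\overline{r^{2j}}$ this proves $T_{3,1}=T_{3,3}$ entrywise. For $T_{3,1}=\tfrac12 T_{1,1}$ I would match the module with character $\chi_1+\chi_3$ to $\Phi_{\varphi_k}=\chi_1+\chi_2+\chi_3+\chi_4$, and the module with even-indexed character $\chi_t$ to the projective character $\Phi_{\varphi_{S_i}}=\chi_{i+3}+\chi_{v+5-i}$ containing it. On the $p'$-classes $[r^{2j}]$ one has $\chi_1\equiv\chi_2\equiv\chi_3\equiv\chi_4\equiv 1$, so $(\chi_1+\chi_3)|_{p'}=\tfrac12\Phi_{\varphi_k}|_{p'}$; and the two degree-$2$ constituents of each $\Phi_{\varphi_{S_i}}$ agree on $p'$-classes by the cosine identity in the proof of \cref{PIMsOfD4v}(c), whence $\chi_t|_{p'}=\tfrac12\Phi_{\varphi_{S_i}}|_{p'}$. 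Crucially, since $v$ is odd the indices $i+3$ and $v+5-i$ of each degree-$2$ block sum to the odd number $v+8$, so exactly one of them is even; thus $\chi_6,\chi_8,\dots,\chi_{v+3}$ pick out precisely one constituent from each degree-$2$ block and the row matching is a genuine bijection, yielding $T_{3,1}=\tfrac12 T_{1,1}$ after the corresponding permutation of rows.

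The main obstacle is the identity $T_{3,3}=T_{3,1}$: it is the only place where one must reconcile the two a priori different descriptions of a species value — as a Brauer character of the Brauer quotient at $Q_3$, and as an ordinary character value at a $p'$-element — which requires the compatibility of inflation with both the trivial source lift and the Brauer construction, together with the bookkeeping that identifies the $p'$-classes of $G$ with those of $H$. Once this is in place, $T_{3,1}=\tfrac12 T_{1,1}$ is a routine, if parity-sensitive, consequence of the explicit character values, and the remaining entries have already been pinned down above, so the full shape of \cref{table:TSCT_D_4v_when_p_is_2} follows.
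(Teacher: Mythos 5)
Your proof is correct and follows essentially the same route as the paper: the zero blocks from \cref{rem:tsctbl}, the nonzero entries from \cref{LemmaRickard} together with the trivial source characters already determined in \cref{PIMsOfD4v} and \cref{TSCT_data_of_D4v}, the identity $T_{3,3}=T_{3,1}$ from inflation along $D_{4v}\twoheadrightarrow D_{2v}$, and $T_{3,1}=\tfrac12 T_{1,1}$ from the fact that the two constituents of each projective character agree on $2'$-classes. You spell out a few points the paper leaves implicit (the Brauer-quotient computation $M[Q_3]\cong P$ and the parity argument for the row bijection), but these are elaborations of the same argument rather than a different approach.
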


\begin{proof}
By \cref{2_Prime_Classes_Of_D_4v} we may choose our labels of the columns of the block columns of $\Triv_2(D_{4v})$ as asserted. The ordinary characters of the elements of $\TS(G;Q_i)$ are given by \cref{PIMsOfD4v}(d) when $i=1$, by \cref{TSCT_data_of_D4v}(e) when $i\in \{2,4,5\}$, and by \cref{TSCT_data_of_D4v}(c) when~$i=3$. This justifies our labelling of the rows of $\Triv_2(D_{4v})$.\par
\indent Next, it is immediate from \cref{rem:tsctbl} that $T_{i,j} = {\mathbf{0}}$ for $1\leq i<j\leq 5$. By \cref{LemmaRickard} and the choice of our labelling of the columns of the block columns of $\Triv_2(D_{4v})$, all remaining entries of $\Triv_2(D_{4v})$ except for the entry of $T_{5,5}$ are obtained by evaluating the trivial source characters at certain elements of the group $D_{4v}$, which is easily done using \cref{table:Ordinary_ct_D_4v}. The claim that ~$T_{5,5}=[1]$ follows from \cref{rem:tsctbl}(b).\par
\indent  If we label the rows of the block rows of $\Triv_2(D_{4v})$ as in \cref{table:TSCT_D_4v_when_p_is_2}, then we obtain that $T_{3,3} = T_{3,1}$ and $T_{3,1} = \frac{1}{2} \cdot T_{1,1}$, due to the following arguments: by \cref{TSCT_data_of_D4v}(d) we have $T_{3,3} = T_{3,1}$, as inflation of characters from $H$ to $G$ does not change their values at conjugacy classes; and by \cref{PIMsOfD4v}, for each projective character of $kG$ the number of constituents with odd indices is equal to the number of constituents with even indices. Therefore, also the equation $T_{3,1} = \frac{1}{2}\cdot T_{1,1}$ holds, as was to be verified.
\end{proof}

\begin{table}[ht]
	\centering
	{\resizebox{\linewidth}{!}{\renewcommand{\arraystretch}{1.1}	
    \begin{tabular}{@{}l@{}l@{}l@{}l@{}l@{}l@{}l@{}l@{}l@{}l@{}l@{}l@{}l@{}l@{}l@{}l@{}l@{}l@{}}
		\(\begin{array}{|l V{4} c c c c c c | c | c c c c c|c|c|}
	\hline
	Q_v\ (1\leq v\leq 5) & \multicolumn{6}{c|}{Q_{1}\cong C_1} & \multicolumn{1}{c|}{Q_{2}\cong C_2} & \multicolumn{5}{c|}{Q_{3}\cong C_2} & \multicolumn{1}{c|}{Q_{4}\cong C_2} & \multicolumn{1}{c|}{Q_{5}\cong V_4}\\ \hline
	N_v\ (1\leq v\leq 5) & \multicolumn{6}{c|}{N_{1}\cong D_{4v}} & \multicolumn{1}{c|}{N_{2}\cong V_4} & \multicolumn{5}{c|}{N_{3}\cong D_{4v}} & \multicolumn{1}{c|}{N_{4}\cong V_4} & \multicolumn{1}{c|}{N_{5}\cong V_4}\\ \hline
	n_j\ \in\ N_v & 1 & r^2 & r^4 & r^6 & \cdots & r^{v-1} & {1} & {1} &  {r^2} & {r^4} & \cdots & {r^{v-1}} &  {1}  &  {1}\\ \Xhline{4\arrayrulewidth}
	\chi_1 + \chi_2 + \chi_3 + \chi_4  &  4  &  4  &  4  &  4  &  \cdots  &  4  &  0  &  0  &  0  &  0  &  \cdots  &  0  &  0  &  0 \\
	\chi_{4+1} + \chi_{4+(v-1)}  &  4  &  2\left(\omega_1^2 + \overline{\omega}_1^2\right)  &  2\left(\omega_1^4 + \overline{\omega}_1^4\right)  &  2\left(\omega_1^6 + \overline{\omega}_1^6\right)  &  \cdots  &  2\left(\omega_1^{v-1} + \overline{\omega}_1^{v-1}\right)  &  0  &  0  &  0  &  0  &  \cdots  &  0  &  0  &  0 \\
	\chi_{4+2} + \chi_{4+(v-2)}  &  4  &  2\left(\omega_2^2 + \overline{\omega}_2^2\right)  &  2\left(\omega_2^4 + \overline{\omega}_2^4\right)  &  2\left(\omega_2^6 + \overline{\omega}_2^6\right)  &  \cdots  &  2\left(\omega_2^{v-1} + \overline{\omega}_2^{v-1}\right)  &  0  &  0  &  0  &  0  &  \cdots  &  0  &  0  &  0 \\
	\vdots  &  \vdots  &  \vdots  &  \vdots  &  \vdots  &  \cdots  &  \vdots  &  \vdots  &  \vdots  &  \vdots  &  \vdots  &  \cdots  &  \vdots  &  \vdots  &  \vdots \\
	\chi_{4+\frac{v-1}{2}} + \chi_{4+\frac{v+1}{2}}  &  4  &  2\left(\omega_{\frac{v-1}{2}}^2 + \overline{\omega}_{\frac{v-1}{2}}^2\right)  &  2\left(\omega_{\frac{v-1}{2}}^4 + \overline{\omega}_{\frac{v-1}{2}}^4\right)  &  2\left(\omega_{\frac{v-1}{2}}^6 + \overline{\omega}_{\frac{v-1}{2}}^6\right)  &  \cdots  &  2\left(\omega_{\frac{v-1}{2}}^{v-1} + \overline{\omega}_{\frac{v-1}{2}}^{v-1}\right)  &  0  &  0  &  0  &  0  &  \cdots  &  0  &  0  &  0 \\
	\hline
	\chi_1 + \chi_4  &  2  &  2  &  2  &  2  &  \cdots  &  2  &  2  &  0  &  0  &  0  &  \cdots  &  0  &  0  &  0 \\
	\hline
	\chi_1 + \chi_3  &  2  &  2  &  2  &  2  &  \cdots  &  2  &  0  &  2  &  2  &  2  &  \cdots  &  2  &  0  &  0 \\
	\chi_{4+2}  &  2  &  \omega_2^2 + \overline{\omega}_2^2  &  \omega_2^4 + \overline{\omega}_2^4  &  \omega_2^6 + \overline{\omega}_2^6  &  \cdots  &  \omega_2^{v-1} + \overline{\omega}_2^{v-1}  &  0  &  2  &  \omega_2^2 + \overline{\omega}_2^2  &  \omega_2^4 + \overline{\omega}_2^4  &  \cdots  &  \omega_2^{v-1} + \overline{\omega}_2^{v-1}  &  0  &  0 \\
	\chi_{4+(v-1)}  &  2  &  \omega_{v-1}^2 + \overline{\omega}_{v-1}^2  &  \omega_{v-1}^4 + \overline{\omega}_{v-1}^4  &  \omega_{v-1}^6 + \overline{\omega}_{v-1}^6  &  \cdots  &  \omega_{v-1}^{v-1} + \overline{\omega}_{v-1}^{v-1}  &  0  &  2  &  \omega_{v-1}^2 + \overline{\omega}_{v-1}^2  &  \omega_{v-1}^4 + \overline{\omega}_{v-1}^4  &  \cdots  &  \omega_{v-1}^{v-1} + \overline{\omega}_{v-1}^{v-1}  &  0  &  0 \\
	\chi_{4+4}  &  2  &  \omega_4^2 + \overline{\omega}_4^2  &  \omega_4^4 + \overline{\omega}_4^4  &  \omega_4^6 + \overline{\omega}_4^6  &  \cdots  &  \omega_4^{v-1} + \overline{\omega}_4^{v-1}  &  0  &  2  &  \omega_4^2 + \overline{\omega}_4^2  &  \omega_4^4 + \overline{\omega}_4^4  &  \cdots  &  \omega_4^{v-1} + \overline{\omega}_4^{v-1}  &  0  &  0 \\
	\chi_{4+(v-3)}  &  2  &  \omega_{v-3}^2 + \overline{\omega}_{v-3}^2  &  \omega_{v-3}^4 + \overline{\omega}_{v-3}^4  &  \omega_{v-3}^6 + \overline{\omega}_{v-3}^6  &  \cdots  &  \omega_{v-3}^{v-1} + \overline{\omega}_{v-3}^{v-1}  &  0  &  2  &  \omega_{v-3}^2 + \overline{\omega}_{v-3}^2  &  \omega_{v-3}^4 + \overline{\omega}_{v-3}^4  &  \cdots  &  \omega_{v-3}^{v-1} + \overline{\omega}_{v-3}^{v-1}  &  0  &  0 \\
	\vdots  &  \vdots  &  \vdots  &  \vdots  &  \vdots  &  \cdots  &  \vdots  &  \vdots  &  \vdots  &  \vdots  &  \vdots  &  \cdots  &  \vdots  &  \vdots  &  \vdots \\
	\hline
	\chi_1 + \chi_2  &  2  &  2  &  2  &  2  &  \cdots  &  2  &  2  &  0  &  0  &  0  &  \cdots  &  0  &  2  &  0 \\
	\hline
	\chi_1  &  1  &  1  &  1  &  1  &  \cdots  &  1  &  1  &  1  &  1  &  1  &  \cdots  &  1  &  1  &  1 \\
	\hline
\end{array}\)
\end{tabular}
	}}
	\caption{Trivial source character table of $D_{4v}$ at $p=2$ for odd $v$}
	\label{table:TSCT_D_4v_when_p_is_2}
\title{}
\end{table}

\subsection{Example 2: a computational example involving a concrete group algebra of domestic representation type}
In this computational subsection, we suppose that our $p$-modular system $(K, \mathcal{O}, k)$ is a standard
$p$-modular system for $G$ as in \cite[Definition 4.2.10]{LuxPah}. Moreover, we assume that the group~$G$ is isomorphic to $N\rtimes H$, where $N= \langle a \rangle\times \langle b \rangle\times \langle c \rangle\times \langle d \rangle$ with $a,b,c$, and $d$ commuting elements of order~$3$ and $H = \langle {\bf{(1,2)(3,4)}}, {\bf{(2,3,4)}} \rangle \cong \AlternatingGroup_4$ acting on $N$ via permutations of the letters~$a,b,c,d$.\\
\indent The aim of this example is twofold: first, using\ \GAP,\ we compute the ordinary character table~$X(G)$ of~$G$ and, by the results obtained in the previous section, together with the entries of~$X(G)$, show a possible way to determine the trivial source characters of those modules which belong to blocks with Klein-four defect groups. Second, we determine the trivial source character table of $G$ at the prime two and see that --- although our main results determine a large portion of $\Triv_2(G)$ ---  this is not an easy consequence of the knowledge of the trivial source characters of the $kG$-modules alone, mainly because we have blocks with various defects and therefore many different Brauer constructions to compute.\\
\indent We can obtain the group $G$ in \GAP\ 4.12.2 as follows:\par\medskip

\begin{footnotesize}
\begin{verbatim}
    H1 := CyclicGroup(3);;  D := DirectProduct(H1,H1,H1,H1);;  gensD := GeneratorsOfGroup(D);;
    d1 := gensD[1];; d2 := gensD[2];; d3 := gensD[3];; d4 := gensD[4];;
    phi1 := GroupHomomorphismByImages(D,D,[d1,d2,d3,d4],[d2,d1,d4,d3]);;
    phi2 := GroupHomomorphismByImages(D,D,[d1,d2,d3,d4],[d1,d4,d2,d3]);;
    AutD := AutomorphismGroup(D);;  H := Subgroup(AutD,[phi1,phi2]);;
    G := SemidirectProduct(H,D); # returns: <pc group with 7 generators>
    IdSmallGroup(G);; # returns: [972,877]
\end{verbatim}
\end{footnotesize}
\par\medskip
\noindent Hence, the group $G$ is isomorphic to $\texttt{SmallGroup}(972,877)$ of \GAP's SmallGroups database. Note that \GAP\ operates from the right. Hence, \GAP\ sees $G$ as $G= H \ltimes N$. Our code code continues:
\par\medskip
\begin{footnotesize}
\begin{verbatim}
    incl1:=Embedding(G,1);; # the embedding of H into G
    incl2:=Embedding(G,2);; # the embedding of N into G
\end{verbatim}
\end{footnotesize}
\par\medskip
\noindent Next, we compute the ordinary character table of $G$ as follows, where we set $w:=\exp(2\pi i/3)$:\par\medskip
\begin{footnotesize}
\begin{verbatim}
    ctG := CharacterTable(G);; Display(ctG); # This yields the character table of the group G.
\end{verbatim}
\end{footnotesize}
\noindent See \cref{table:ct_G_is_N_rtimes_H}. Next, we determine the conjugacy classes used by \GAP's character table \texttt{ctG}:\par\medskip
\begin{footnotesize}
\begin{verbatim}
    ConjugacyClasses(ctG); # returns:
    # [ <identity> of ...^G, f1^G, f2^G, f4^G, f1^2^G, f1*f4^G, f1*f5^G, f2*f4^G, f4^2^G, 
    # f4*f5^G, f1^2*f4^G, f1^2*f5^G, f1*f4^2^G, f1*f4*f5^G, f1*f5^2^G, 
    # f2*f4^2^G, f2*f4*f5^G, f4^2*f5^G, f4*f5*f6^G, f1^2*f4^2^G, f1^2*f4*f5^G, 
    # f1^2*f5^2^G, f1*f4^2*f5^G, f1*f4*f5^2^G, f2*f4^2*f5^G, f4^2*f5^2^G, 
    # f4^2*f5*f6^G, f4*f5*f6*f7^G, f1^2*f4^2*f5^G, f1^2*f4*f5^2^G, f1*f4^2*f5^2^G, 
    # f2*f4^2*f5^2^G, f4^2*f5^2*f6^G, f4^2*f5*f6*f7^G, f1^2*f4^2*f5^2^G, 
    # f4^2*f5^2*f6^2^G, f4^2*f5^2*f6*f7^G, f4^2*f5^2*f6^2*f7^G, f4^2*f5^2*f6^2*f7^2^G ]

    ClassNames(ctG); # returns:
    # [ "1a", "3a", "2a", "3b", "3c", "3d", "9a", "6a", "3e", "3f", "3g", "9b", 
    # "3h", "9c", "9d", "6b", "6c", "3i", "3j", "3k", "9e", "9f", "9g", "9h", "6d",  
    # "3l", "3m", "3n", "9i", "9j", "9k", "6e", "3o", "3p", "9l", "3q", "3r", "3s", "3t" ]
\end{verbatim}
\end{footnotesize}
\noindent Next, we identify the generators of the group $G$ used by \GAP:
\par\medskip
\begin{footnotesize}
\begin{verbatim}
    incl2(d1); # returns: f4
    incl2(d2); # returns: f5
    incl2(d3); # returns: f6
    incl2(d4); # returns: f7
    gensH := GeneratorsOfGroup(H); # returns:
    # [ [ f1, f2, f3, f4 ] -> [ f2, f1, f4, f3 ], [ f1, f2, f3, f4 ] -> [ f1, f4, f2, f3 ] ]
    incl1(gensH[1]); # returns: f3
    incl1(gensH[2]); # returns: f1
    A := incl1(gensH[1]);; B := incl1(gensH[2]);;
    A*B*A*A*B*A*B*A; # returns: f2
    gensH[1]*gensH[2]*gensH[1]*gensH[1]*gensH[2]*gensH[1]*gensH[2]*gensH[1];
    # corresponds to ABAABABA and returns: [ f1, f2, f3, f4 ] -> [ f4, f3, f2, f1 ]
\end{verbatim}
\end{footnotesize}
\par\medskip
\noindent Hence, \texttt{f2} corresponds to the group element ${\bf{(1,4)(2,3)}}\in H$.

\begin{properties}\label{2blocks_of_G}
Now, we determine the distribution of the ordinary irreducible characters of~$G$ into~$p$-blocks using \GAP\ and the result is as follows.
	\begin{enumerate}[label=\textup{(\alph*)}]
		\item $|\textup{Bl}_2(G)|=27$;
		\item $\textup{Irr}_K(B_0(G))=\{\chi_1, \chi_4, \chi_5, \chi_{10}\}$, $\textup{Irr}_K(B_1(G))=\{\chi_2, \chi_6, \chi_9, \chi_{11}\}$,\par\noindent $\textup{Irr}_K(B_2(G))=\{\chi_3, \chi_7, \chi_8, \chi_{12}\}$, $\textup{Irr}_K(B_{21}(G))=\{\chi_{31}, \chi_{32}\}$, $\textup{Irr}_K(B_{22}(G))=\{\chi_{33}, \chi_{35}\}$, $\textup{Irr}_K(B_{23}(G))=\{\chi_{34}, \chi_{36}\}$, all other elements of $\Irr_K(G)$ belong to a block of defect zero.
	\end{enumerate}
\end{properties}

\noindent The assertions follow from the following \GAP\ code. Note that the indices of the $p$-blocks of $G$ are shifted by one as \GAP\ denotes the principal block of $G$ by $B_1(G)$.\par\medskip
\begin{footnotesize}
\begin{verbatim} 
    pbs := PrimeBlocks(ctG,2);;  pbs.block; # returns:
    # [ 1, 2, 3, 1, 1, 2, 3, 3, 2, 1, 2, 3, 4, 5, 6, 7, 8, 9, 10, 11, 12, 13, 14, 
    # 15, 16, 17, 18, 19, 20, 21, 22, 22, 23, 24, 23, 24, 25, 26, 27 ]
    pbs.defect; # returns:
    # [ 2, 2, 2, 0, 0, 0, 0, 0, 0, 0, 0, 0, 0, 0, 0, 0, 0, 0, 0, 0, 0, 1, 1, 1, 0, 0, 0 ]
\end{verbatim}
\end{footnotesize}
\par\medskip
\noindent Next, we set $Q_1:= \TrivialGroup, Q_2:= \langle {\bf{(1,2)(3,4)}} \rangle$, and $Q_3:= \langle {\bf{(1,2)(3,4), (1,3)(2,4)}} \rangle \cong V_4$. Furthermore, we choose $\mathscr{S}_2(G)=\{Q_1, Q_2, Q_3\}$. Then, the chain of subgroups $Q_1\leq Q_2\leq Q_3$ is the lattice of subgroups in $\mathscr{S}_2(G)$.
\newpage
\begin{landscape}
\vspace*{\fill}
\begin{center}
\begin{table}[h]
	{\resizebox{22.2cm}{!}{
\begin{tabular}{@{}l@{}l@{}l@{}}
\hline
\(\begin{array}{|l|ccccccccccccccccccccccccccccccccccccccc|}
  & 1a & 3a & 2a & 3b & 3c & 3d & 9a & 6a & 3e & 3f & 3g & 9b & 3h & 9c & 9d & 6b & 6c & 3i & 3j & 3k & 9e & 9f & 9g & 9h & 6d & 3l & 3m & 3n & 9i & 9j & 9k & 6e & 3o & 3p & 9l & 3q & 3r & 3s & 3t\\ \hline
\chi_{1} & 1 & 1 & 1 & 1 & 1 & 1 & 1 & 1 & 1 & 1 & 1 & 1 & 1 & 1 & 1 & 1 & 1 & 1 & 1 & 1 & 1 & 1 & 1 & 1 & 1 & 1 & 1 & 1 & 1 & 1 & 1 & 1 & 1 & 1 & 1 & 1 & 1 & 1 & 1\\
\chi_{2} & 1 & 1 & 1 & {\omega}^{2} & 1 & {\omega}^{2} & {\omega}^{2} & {\omega}^{2} & {\omega} & {\omega} & {\omega}^{2} & {\omega}^{2} & {\omega} & {\omega} & {\omega} & {\omega} & {\omega} & 1 & 1 & {\omega} & {\omega} & {\omega} & 1 & 1 & 1 & {\omega}^{2} & {\omega}^{2} & {\omega}^{2} & 1 & 1 & {\omega}^{2} & {\omega}^{2} & {\omega} & {\omega} & {\omega}^{2} & 1 & 1 & {\omega}^{2} & {\omega}\\
\chi_{3} & 1 & 1 & 1 & {\omega} & 1 & {\omega} & {\omega} & {\omega} & {\omega}^{2} & {\omega}^{2} & {\omega} & {\omega} & {\omega}^{2} & {\omega}^{2} & {\omega}^{2} & {\omega}^{2} & {\omega}^{2} & 1 & 1 & {\omega}^{2} & {\omega}^{2} & {\omega}^{2} & 1 & 1 & 1 & {\omega} & {\omega} & {\omega} & 1 & 1 & {\omega} & {\omega} & {\omega}^{2} & {\omega}^{2} & {\omega} & 1 & 1 & {\omega} & {\omega}^{2}\\
\chi_{4} & 1 & {\omega}^{2} & 1 & 1 & {\omega} & {\omega}^{2} & {\omega}^{2} & 1 & 1 & 1 & {\omega} & {\omega} & {\omega}^{2} & {\omega}^{2} & {\omega}^{2} & 1 & 1 & 1 & 1 & {\omega} & {\omega} & {\omega} & {\omega}^{2} & {\omega}^{2} & 1 & 1 & 1 & 1 & {\omega} & {\omega} & {\omega}^{2} & 1 & 1 & 1 & {\omega} & 1 & 1 & 1 & 1\\
\chi_{5} & 1 & {\omega} & 1 & 1 & {\omega}^{2} & {\omega} & {\omega} & 1 & 1 & 1 & {\omega}^{2} & {\omega}^{2} & {\omega} & {\omega} & {\omega} & 1 & 1 & 1 & 1 & {\omega}^{2} & {\omega}^{2} & {\omega}^{2} & {\omega} & {\omega} & 1 & 1 & 1 & 1 & {\omega}^{2} & {\omega}^{2} & {\omega} & 1 & 1 & 1 & {\omega}^{2} & 1 & 1 & 1 & 1\\
\chi_{6} & 1 & {\omega}^{2} & 1 & {\omega}^{2} & {\omega} & {\omega} & {\omega} & {\omega}^{2} & {\omega} & {\omega} & 1 & 1 & 1 & 1 & 1 & {\omega} & {\omega} & 1 & 1 & {\omega}^{2} & {\omega}^{2} & {\omega}^{2} & {\omega}^{2} & {\omega}^{2} & 1 & {\omega}^{2} & {\omega}^{2} & {\omega}^{2} & {\omega} & {\omega} & {\omega} & {\omega}^{2} & {\omega} & {\omega} & 1 & 1 & 1 & {\omega}^{2} & {\omega}\\
\chi_{7} & 1 & {\omega} & 1 & {\omega} & {\omega}^{2} & {\omega}^{2} & {\omega}^{2} & {\omega} & {\omega}^{2} & {\omega}^{2} & 1 & 1 & 1 & 1 & 1 & {\omega}^{2} & {\omega}^{2} & 1 & 1 & {\omega} & {\omega} & {\omega} & {\omega} & {\omega} & 1 & {\omega} & {\omega} & {\omega} & {\omega}^{2} & {\omega}^{2} & {\omega}^{2} & {\omega} & {\omega}^{2} & {\omega}^{2} & 1 & 1 & 1 & {\omega} & {\omega}^{2}\\
\chi_{8} & 1 & {\omega}^{2} & 1 & {\omega} & {\omega} & 1 & 1 & {\omega} & {\omega}^{2} & {\omega}^{2} & {\omega}^{2} & {\omega}^{2} & {\omega} & {\omega} & {\omega} & {\omega}^{2} & {\omega}^{2} & 1 & 1 & 1 & 1 & 1 & {\omega}^{2} & {\omega}^{2} & 1 & {\omega} & {\omega} & {\omega} & {\omega} & {\omega} & 1 & {\omega} & {\omega}^{2} & {\omega}^{2} & {\omega}^{2} & 1 & 1 & {\omega} & {\omega}^{2}\\
\chi_{9} & 1 & {\omega} & 1 & {\omega}^{2} & {\omega}^{2} & 1 & 1 & {\omega}^{2} & {\omega} & {\omega} & {\omega} & {\omega} & {\omega}^{2} & {\omega}^{2} & {\omega}^{2} & {\omega} & {\omega} & 1 & 1 & 1 & 1 & 1 & {\omega} & {\omega} & 1 & {\omega}^{2} & {\omega}^{2} & {\omega}^{2} & {\omega}^{2} & {\omega}^{2} & 1 & {\omega}^{2} & {\omega} & {\omega} & {\omega} & 1 & 1 & {\omega}^{2} & {\omega}\\
\chi_{10} & 3 & 0 & -1 & 3 & 0 & 0 & 0 & -1 & 3 & 3 & 0 & 0 & 0 & 0 & 0 & -1 & -1 & 3 & 3 & 0 & 0 & 0 & 0 & 0 & -1 & 3 & 3 & 3 & 0 & 0 & 0 & -1 & 3 & 3 & 0 & 3 & 3 & 3 & 3\\
\chi_{11} & 3 & 0 & -1 & 3 {\omega}^{2} & 0 & 0 & 0 & -{\omega}^{2} & 3 {\omega} & 3 {\omega} & 0 & 0 & 0 & 0 & 0 & -{\omega} & -{\omega} & 3 & 3 & 0 & 0 & 0 & 0 & 0 & -1 & 3 {\omega}^{2} & 3 {\omega}^{2} & 3 {\omega}^{2} & 0 & 0 & 0 & -{\omega}^{2} & 3 {\omega} & 3 {\omega} & 0 & 3 & 3 & 3 {\omega}^{2} & 3 {\omega}\\
\chi_{12} & 3 & 0 & -1 & 3 {\omega} & 0 & 0 & 0 & -{\omega} & 3 {\omega}^{2} & 3 {\omega}^{2} & 0 & 0 & 0 & 0 & 0 & -{\omega}^{2} & -{\omega}^{2} & 3 & 3 & 0 & 0 & 0 & 0 & 0 & -1 & 3 {\omega} & 3 {\omega} & 3 {\omega} & 0 & 0 & 0 & -{\omega} & 3 {\omega}^{2} & 3 {\omega}^{2} & 0 & 3 & 3 & 3 {\omega} & 3 {\omega}^{2}\\
\chi_{13} & 4 & 1 & 0 & -{\omega}+2 {\omega}^{2} & 1 & 1 & {\omega}^{2} & 0 & 2 {\omega}-{\omega}^{2} & -2 & 1 & {\omega}^{2} & 1 & {\omega}^{2} & {\omega} & 0 & 0 & 1 & 2 {\omega}-{\omega}^{2} & 1 & {\omega}^{2} & {\omega} & {\omega}^{2} & {\omega} & 0 & -2 & 1 & 4 & {\omega}^{2} & {\omega} & {\omega} & 0 & 1 & -{\omega}+2 {\omega}^{2} & {\omega} & -{\omega}+2 {\omega}^{2} & -2 & 2 {\omega}-{\omega}^{2} & 4\\
\chi_{14} & 4 & 1 & 0 & 2 {\omega}-{\omega}^{2} & 1 & 1 & {\omega} & 0 & -{\omega}+2 {\omega}^{2} & -2 & 1 & {\omega} & 1 & {\omega} & {\omega}^{2} & 0 & 0 & 1 & -{\omega}+2 {\omega}^{2} & 1 & {\omega} & {\omega}^{2} & {\omega} & {\omega}^{2} & 0 & -2 & 1 & 4 & {\omega} & {\omega}^{2} & {\omega}^{2} & 0 & 1 & 2 {\omega}-{\omega}^{2} & {\omega}^{2} & 2 {\omega}-{\omega}^{2} & -2 & -{\omega}+2 {\omega}^{2} & 4\\
\chi_{15} & 4 & {\omega} & 0 & -{\omega}+2 {\omega}^{2} & {\omega}^{2} & {\omega} & 1 & 0 & 2 {\omega}-{\omega}^{2} & -2 & {\omega}^{2} & {\omega} & {\omega} & 1 & {\omega}^{2} & 0 & 0 & 1 & 2 {\omega}-{\omega}^{2} & {\omega}^{2} & {\omega} & 1 & 1 & {\omega}^{2} & 0 & -2 & 1 & 4 & {\omega} & 1 & {\omega}^{2} & 0 & 1 & -{\omega}+2 {\omega}^{2} & 1 & -{\omega}+2 {\omega}^{2} & -2 & 2 {\omega}-{\omega}^{2} & 4\\
\chi_{16} & 4 & {\omega}^{2} & 0 & 2 {\omega}-{\omega}^{2} & {\omega} & {\omega}^{2} & 1 & 0 & -{\omega}+2 {\omega}^{2} & -2 & {\omega} & {\omega}^{2} & {\omega}^{2} & 1 & {\omega} & 0 & 0 & 1 & -{\omega}+2 {\omega}^{2} & {\omega} & {\omega}^{2} & 1 & 1 & {\omega} & 0 & -2 & 1 & 4 & {\omega}^{2} & 1 & {\omega} & 0 & 1 & 2 {\omega}-{\omega}^{2} & 1 & 2 {\omega}-{\omega}^{2} & -2 & -{\omega}+2 {\omega}^{2} & 4\\
\chi_{17} & 4 & {\omega}^{2} & 0 & -{\omega}+2 {\omega}^{2} & {\omega} & {\omega}^{2} & {\omega} & 0 & 2 {\omega}-{\omega}^{2} & -2 & {\omega} & 1 & {\omega}^{2} & {\omega} & 1 & 0 & 0 & 1 & 2 {\omega}-{\omega}^{2} & {\omega} & 1 & {\omega}^{2} & {\omega} & 1 & 0 & -2 & 1 & 4 & 1 & {\omega}^{2} & 1 & 0 & 1 & -{\omega}+2 {\omega}^{2} & {\omega}^{2} & -{\omega}+2 {\omega}^{2} & -2 & 2 {\omega}-{\omega}^{2} & 4\\
\chi_{18} & 4 & {\omega} & 0 & 2 {\omega}-{\omega}^{2} & {\omega}^{2} & {\omega} & {\omega}^{2} & 0 & -{\omega}+2 {\omega}^{2} & -2 & {\omega}^{2} & 1 & {\omega} & {\omega}^{2} & 1 & 0 & 0 & 1 & -{\omega}+2 {\omega}^{2} & {\omega}^{2} & 1 & {\omega} & {\omega}^{2} & 1 & 0 & -2 & 1 & 4 & 1 & {\omega} & 1 & 0 & 1 & 2 {\omega}-{\omega}^{2} & {\omega} & 2 {\omega}-{\omega}^{2} & -2 & -{\omega}+2 {\omega}^{2} & 4\\
\chi_{19} & 4 & 1 & 0 & -3 {\omega}-2 {\omega}^{2} & 1 & {\omega}^{2} & 1 & 0 & -2 {\omega}-3 {\omega}^{2} & -2 {\omega} & {\omega}^{2} & 1 & {\omega} & {\omega}^{2} & 1 & 0 & 0 & 1 & -{\omega}+2 {\omega}^{2} & {\omega} & {\omega}^{2} & 1 & {\omega} & {\omega}^{2} & 0 & -2 {\omega}^{2} & {\omega}^{2} & 4 {\omega}^{2} & {\omega} & {\omega}^{2} & {\omega} & 0 & {\omega} & {\omega}+3 {\omega}^{2} & {\omega} & 2 {\omega}-{\omega}^{2} & -2 & 3 {\omega}+{\omega}^{2} & 4 {\omega}\\
\chi_{20} & 4 & 1 & 0 & -2 {\omega}-3 {\omega}^{2} & 1 & {\omega} & 1 & 0 & -3 {\omega}-2 {\omega}^{2} & -2 {\omega}^{2} & {\omega} & 1 & {\omega}^{2} & {\omega} & 1 & 0 & 0 & 1 & 2 {\omega}-{\omega}^{2} & {\omega}^{2} & {\omega} & 1 & {\omega}^{2} & {\omega} & 0 & -2 {\omega} & {\omega} & 4 {\omega} & {\omega}^{2} & {\omega} & {\omega}^{2} & 0 & {\omega}^{2} & 3 {\omega}+{\omega}^{2} & {\omega}^{2} & -{\omega}+2 {\omega}^{2} & -2 & {\omega}+3 {\omega}^{2} & 4 {\omega}^{2}\\
\chi_{21} & 4 & {\omega} & 0 & -3 {\omega}-2 {\omega}^{2} & {\omega}^{2} & 1 & {\omega} & 0 & -2 {\omega}-3 {\omega}^{2} & -2 {\omega} & {\omega} & {\omega}^{2} & {\omega}^{2} & 1 & {\omega} & 0 & 0 & 1 & -{\omega}+2 {\omega}^{2} & 1 & {\omega} & {\omega}^{2} & {\omega}^{2} & 1 & 0 & -2 {\omega}^{2} & {\omega}^{2} & 4 {\omega}^{2} & 1 & {\omega} & {\omega}^{2} & 0 & {\omega} & {\omega}+3 {\omega}^{2} & 1 & 2 {\omega}-{\omega}^{2} & -2 & 3 {\omega}+{\omega}^{2} & 4 {\omega}\\
\chi_{22} & 4 & {\omega}^{2} & 0 & -2 {\omega}-3 {\omega}^{2} & {\omega} & 1 & {\omega}^{2} & 0 & -3 {\omega}-2 {\omega}^{2} & -2 {\omega}^{2} & {\omega}^{2} & {\omega} & {\omega} & 1 & {\omega}^{2} & 0 & 0 & 1 & 2 {\omega}-{\omega}^{2} & 1 & {\omega}^{2} & {\omega} & {\omega} & 1 & 0 & -2 {\omega} & {\omega} & 4 {\omega} & 1 & {\omega}^{2} & {\omega} & 0 & {\omega}^{2} & 3 {\omega}+{\omega}^{2} & 1 & -{\omega}+2 {\omega}^{2} & -2 & {\omega}+3 {\omega}^{2} & 4 {\omega}^{2}\\
\chi_{23} & 4 & {\omega}^{2} & 0 & -3 {\omega}-2 {\omega}^{2} & {\omega} & {\omega} & {\omega}^{2} & 0 & -2 {\omega}-3 {\omega}^{2} & -2 {\omega} & 1 & {\omega} & 1 & {\omega} & {\omega}^{2} & 0 & 0 & 1 & -{\omega}+2 {\omega}^{2} & {\omega}^{2} & 1 & {\omega} & 1 & {\omega} & 0 & -2 {\omega}^{2} & {\omega}^{2} & 4 {\omega}^{2} & {\omega}^{2} & 1 & 1 & 0 & {\omega} & {\omega}+3 {\omega}^{2} & {\omega}^{2} & 2 {\omega}-{\omega}^{2} & -2 & 3 {\omega}+{\omega}^{2} & 4 {\omega}\\
\chi_{24} & 4 & {\omega} & 0 & -2 {\omega}-3 {\omega}^{2} & {\omega}^{2} & {\omega}^{2} & {\omega} & 0 & -3 {\omega}-2 {\omega}^{2} & -2 {\omega}^{2} & 1 & {\omega}^{2} & 1 & {\omega}^{2} & {\omega} & 0 & 0 & 1 & 2 {\omega}-{\omega}^{2} & {\omega} & 1 & {\omega}^{2} & 1 & {\omega}^{2} & 0 & -2 {\omega} & {\omega} & 4 {\omega} & {\omega} & 1 & 1 & 0 & {\omega}^{2} & 3 {\omega}+{\omega}^{2} & {\omega} & -{\omega}+2 {\omega}^{2} & -2 & {\omega}+3 {\omega}^{2} & 4 {\omega}^{2}\\
\chi_{25} & 4 & {\omega} & 0 & 3 {\omega}+{\omega}^{2} & {\omega}^{2} & 1 & {\omega}^{2} & 0 & {\omega}+3 {\omega}^{2} & -2 {\omega} & {\omega} & 1 & {\omega}^{2} & {\omega} & 1 & 0 & 0 & 1 & 2 {\omega}-{\omega}^{2} & 1 & {\omega}^{2} & {\omega} & 1 & {\omega}^{2} & 0 & -2 {\omega}^{2} & {\omega}^{2} & 4 {\omega}^{2} & {\omega} & 1 & {\omega} & 0 & {\omega} & -2 {\omega}-3 {\omega}^{2} & {\omega}^{2} & -{\omega}+2 {\omega}^{2} & -2 & -3 {\omega}-2 {\omega}^{2} & 4 {\omega}\\
\chi_{26} & 4 & {\omega}^{2} & 0 & {\omega}+3 {\omega}^{2} & {\omega} & 1 & {\omega} & 0 & 3 {\omega}+{\omega}^{2} & -2 {\omega}^{2} & {\omega}^{2} & 1 & {\omega} & {\omega}^{2} & 1 & 0 & 0 & 1 & -{\omega}+2 {\omega}^{2} & 1 & {\omega} & {\omega}^{2} & 1 & {\omega} & 0 & -2 {\omega} & {\omega} & 4 {\omega} & {\omega}^{2} & 1 & {\omega}^{2} & 0 & {\omega}^{2} & -3 {\omega}-2 {\omega}^{2} & {\omega} & 2 {\omega}-{\omega}^{2} & -2 & -2 {\omega}-3 {\omega}^{2} & 4 {\omega}^{2}\\
\chi_{27} & 4 & {\omega}^{2} & 0 & 3 {\omega}+{\omega}^{2} & {\omega} & {\omega} & 1 & 0 & {\omega}+3 {\omega}^{2} & -2 {\omega} & 1 & {\omega}^{2} & 1 & {\omega}^{2} & {\omega} & 0 & 0 & 1 & 2 {\omega}-{\omega}^{2} & {\omega}^{2} & {\omega} & 1 & {\omega} & 1 & 0 & -2 {\omega}^{2} & {\omega}^{2} & 4 {\omega}^{2} & 1 & {\omega}^{2} & {\omega}^{2} & 0 & {\omega} & -2 {\omega}-3 {\omega}^{2} & {\omega} & -{\omega}+2 {\omega}^{2} & -2 & -3 {\omega}-2 {\omega}^{2} & 4 {\omega}\\
\chi_{28} & 4 & {\omega} & 0 & {\omega}+3 {\omega}^{2} & {\omega}^{2} & {\omega}^{2} & 1 & 0 & 3 {\omega}+{\omega}^{2} & -2 {\omega}^{2} & 1 & {\omega} & 1 & {\omega} & {\omega}^{2} & 0 & 0 & 1 & -{\omega}+2 {\omega}^{2} & {\omega} & {\omega}^{2} & 1 & {\omega}^{2} & 1 & 0 & -2 {\omega} & {\omega} & 4 {\omega} & 1 & {\omega} & {\omega} & 0 & {\omega}^{2} & -3 {\omega}-2 {\omega}^{2} & {\omega}^{2} & 2 {\omega}-{\omega}^{2} & -2 & -2 {\omega}-3 {\omega}^{2} & 4 {\omega}^{2}\\
\chi_{29} & 4 & 1 & 0 & 3 {\omega}+{\omega}^{2} & 1 & {\omega}^{2} & {\omega} & 0 & {\omega}+3 {\omega}^{2} & -2 {\omega} & {\omega}^{2} & {\omega} & {\omega} & 1 & {\omega}^{2} & 0 & 0 & 1 & 2 {\omega}-{\omega}^{2} & {\omega} & 1 & {\omega}^{2} & {\omega}^{2} & {\omega} & 0 & -2 {\omega}^{2} & {\omega}^{2} & 4 {\omega}^{2} & {\omega}^{2} & {\omega} & 1 & 0 & {\omega} & -2 {\omega}-3 {\omega}^{2} & 1 & -{\omega}+2 {\omega}^{2} & -2 & -3 {\omega}-2 {\omega}^{2} & 4 {\omega}\\
\chi_{30} & 4 & 1 & 0 & {\omega}+3 {\omega}^{2} & 1 & {\omega} & {\omega}^{2} & 0 & 3 {\omega}+{\omega}^{2} & -2 {\omega}^{2} & {\omega} & {\omega}^{2} & {\omega}^{2} & 1 & {\omega} & 0 & 0 & 1 & -{\omega}+2 {\omega}^{2} & {\omega}^{2} & 1 & {\omega} & {\omega} & {\omega}^{2} & 0 & -2 {\omega} & {\omega} & 4 {\omega} & {\omega} & {\omega}^{2} & 1 & 0 & {\omega}^{2} & -3 {\omega}-2 {\omega}^{2} & 1 & 2 {\omega}-{\omega}^{2} & -2 & -2 {\omega}-3 {\omega}^{2} & 4 {\omega}^{2}\\
\chi_{31} & 6 & 0 & -2 & -3 & 0 & 0 & 0 & 1 & -3 & 3 & 0 & 0 & 0 & 0 & 0 & 1 & -2 & 0 & -3 & 0 & 0 & 0 & 0 & 0 & 1 & 3 & 0 & 6 & 0 & 0 & 0 & -2 & 0 & -3 & 0 & -3 & 3 & -3 & 6\\
\chi_{32} & 6 & 0 & 2 & -3 & 0 & 0 & 0 & -1 & -3 & 3 & 0 & 0 & 0 & 0 & 0 & -1 & 2 & 0 & -3 & 0 & 0 & 0 & 0 & 0 & -1 & 3 & 0 & 6 & 0 & 0 & 0 & 2 & 0 & -3 & 0 & -3 & 3 & -3 & 6\\
\chi_{33} & 6 & 0 & -2 & -3 {\omega} & 0 & 0 & 0 & {\omega} & -3 {\omega}^{2} & 3 {\omega}^{2} & 0 & 0 & 0 & 0 & 0 & {\omega}^{2} & -2 {\omega}^{2} & 0 & -3 & 0 & 0 & 0 & 0 & 0 & 1 & 3 {\omega} & 0 & 6 {\omega} & 0 & 0 & 0 & -2 {\omega} & 0 & -3 {\omega}^{2} & 0 & -3 & 3 & -3 {\omega} & 6 {\omega}^{2}\\
\chi_{34} & 6 & 0 & -2 & -3 {\omega}^{2} & 0 & 0 & 0 & {\omega}^{2} & -3 {\omega} & 3 {\omega} & 0 & 0 & 0 & 0 & 0 & {\omega} & -2 {\omega} & 0 & -3 & 0 & 0 & 0 & 0 & 0 & 1 & 3 {\omega}^{2} & 0 & 6 {\omega}^{2} & 0 & 0 & 0 & -2 {\omega}^{2} & 0 & -3 {\omega} & 0 & -3 & 3 & -3 {\omega}^{2} & 6 {\omega}\\
\chi_{35} & 6 & 0 & 2 & -3 {\omega} & 0 & 0 & 0 & -{\omega} & -3 {\omega}^{2} & 3 {\omega}^{2} & 0 & 0 & 0 & 0 & 0 & -{\omega}^{2} & 2 {\omega}^{2} & 0 & -3 & 0 & 0 & 0 & 0 & 0 & -1 & 3 {\omega} & 0 & 6 {\omega} & 0 & 0 & 0 & 2 {\omega} & 0 & -3 {\omega}^{2} & 0 & -3 & 3 & -3 {\omega} & 6 {\omega}^{2}\\
\chi_{36} & 6 & 0 & 2 & -3 {\omega}^{2} & 0 & 0 & 0 & -{\omega}^{2} & -3 {\omega} & 3 {\omega} & 0 & 0 & 0 & 0 & 0 & -{\omega} & 2 {\omega} & 0 & -3 & 0 & 0 & 0 & 0 & 0 & -1 & 3 {\omega}^{2} & 0 & 6 {\omega}^{2} & 0 & 0 & 0 & 2 {\omega}^{2} & 0 & -3 {\omega} & 0 & -3 & 3 & -3 {\omega}^{2} & 6 {\omega}\\
\chi_{37} & 12 & 0 & 0 & 3 & 0 & 0 & 0 & 0 & 3 & 0 & 0 & 0 & 0 & 0 & 0 & 0 & 0 & -3 & 3 & 0 & 0 & 0 & 0 & 0 & 0 & 0 & -3 & 12 & 0 & 0 & 0 & 0 & -3 & 3 & 0 & 3 & 0 & 3 & 12\\
\chi_{38} & 12 & 0 & 0 & 3 {\omega}^{2} & 0 & 0 & 0 & 0 & 3 {\omega} & 0 & 0 & 0 & 0 & 0 & 0 & 0 & 0 & -3 & 3 & 0 & 0 & 0 & 0 & 0 & 0 & 0 & -3 {\omega}^{2} & 12 {\omega}^{2} & 0 & 0 & 0 & 0 & -3 {\omega} & 3 {\omega} & 0 & 3 & 0 & 3 {\omega}^{2} & 12 {\omega}\\
\chi_{39} & 12 & 0 & 0 & 3 {\omega} & 0 & 0 & 0 & 0 & 3 {\omega}^{2} & 0 & 0 & 0 & 0 & 0 & 0 & 0 & 0 & -3 & 3 & 0 & 0 & 0 & 0 & 0 & 0 & 0 & -3 {\omega} & 12 {\omega} & 0 & 0 & 0 & 0 & -3 {\omega}^{2} & 3 {\omega}^{2} & 0 & 3 & 0 & 3 {\omega} & 12 {\omega}^{2}\\
\hline
\end{array}\)\\
\end{tabular}
	}}
	\caption{Ordinary character table of $N\rtimes H$}
	\label{table:ct_G_is_N_rtimes_H}
\title{}
\end{table}
\end{center}
\vspace*{\fill}
\end{landscape}
\newpage

\begin{properties}\label{Normalisers_GAP_Example}
Now, we calculate the normalisers of the $2$-subgroups of $G$ using \GAP\ and the result is as follows.

    \begin{table}[H]
	\centering
	{\scalebox{1}{
	\begin{tabular}{@{}l@{}l@{}l@{}l@{}l@{}l@{}l@{}l@{}l@{}l@{}l@{}l@{}l@{}l@{}}
		\(\begin{array}{lcr}
   N_{G}(Q_1) = {G};\ & N_{G}(Q_2) \cong C_6\times S_3\cong C_3\times D_{12};\ & N_{G}(Q_3) \cong {C_3\times \AlternatingGroup_4};\\[0,5ex]
   \overline{N}_{G}(Q_1) \cong {G}; & \overline{N}_{G}(Q_2) \cong C_3 \times S_3; & \overline{N}_{G}(Q_3) \cong C_3\times C_3.\\
		\end{array}\)
	\end{tabular}
}}
\end{table}
\end{properties}

\noindent These assertions follow, for example, from the following \GAP\ code:\par\medskip
\begin{footnotesize}
\begin{verbatim}
    Q2 := Group([A]);;  Q3 := Group([A,A*B*A*A*B*A*B*A]);;
    N2 := Normaliser(G,Q2); # returns: Group([ f2, f3, f4*f5, f6*f7 ])
    N3 := Normaliser(G,Q3); # returns: Group([ f1, f2, f3, f4*f5*f6*f7 ])
    StructureDescription(N2); # returns: "C6 x S3"
    StructureDescription(N3); # returns: "C3 x A4"
    nat := NaturalHomomorphismByNormalSubgroup(N2,Q2); # returns:
    # [ f2, f3, f4*f5, f6*f7 ] -> [ f1, <identity> of ..., f2, f3 ]
    facN2 := FactorGroup(N2,Q2); # returns: Group([ f1, <identity> of ..., f2, f3 ])
    StructureDescription(facN2); # returns: "C3 x S3"
    facN3 := FactorGroup(N3,Q3); # returns: 
    # Group([ f1, <identity> of ..., <identity> of ..., f2 ])
    StructureDescription(facN3); # returns: "C3 x C3"
\end{verbatim}
\end{footnotesize}
\vspace{0.2cm}

\noindent We understand the results of \cref{2blocks_of_G} theoretically:
\begin{enumerate}[label=\textup{(\alph*)}]
    \item It follows from \cref{Normalisers_GAP_Example} that $kN_G(Q_3)\cong k[C_3\times \AlternatingGroup_4]$ has exactly three $2$-blocks with defect groups isomorphic to $V_4$. Hence, by Brauer's first main theorem, the group $G$ has exactly three~$2$-blocks with defect groups isomorphic to $V_4$. We have already denoted these three blocks by $B_0(G)$, $B_1(G)$, and $B_2(G)$, respectively.
    \item It follows from \cref{Normalisers_GAP_Example} that $kN_G(Q_2)\cong C_3\times D_{12}$ has exactly three $2$-blocks with defect groups isomorphic to $C_2$. Hence, by Brauer's first main theorem, the group $G$ has exactly three~$2$-blocks with defect groups isomorphic to $C_2$. We have already denoted these three blocks by $B_{21}(G)$, $B_{22}(G)$, and $B_{23}(G)$, respectively.
\end{enumerate}

\begin{cor}\label{Cor:B_0(G)_is_splendidly_Morita_equivalent_to_kA_4}
    The block algebras $B_0(G)$, $B_1(G)$, and $B_2(G)$ are splendidly Morita equivalent to the group algebra $k\AlternatingGroup_4$.
\end{cor}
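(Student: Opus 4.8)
The plan is to combine the structural reduction already carried out with the classification in \cref{KleinFourDefectGroupsCravenEatonLinckelmannK}, and then to pin down the correct splendid Morita equivalence class by means of a Morita invariant. The discussion preceding this corollary, together with \cref{Normalisers_GAP_Example}, \cref{2blocks_of_G}, and Brauer's first main theorem, shows that each of $B_0(G)$, $B_1(G)$, $B_2(G)$ has a defect group isomorphic to $V_4$. Hence \cref{KleinFourDefectGroupsCravenEatonLinckelmannK} applies and guarantees that each $B_i(G)$ is splendidly Morita equivalent to exactly one of $kV_4$, $k\AlternatingGroup_4$, or $B_0(k\AlternatingGroup_5)$. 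Since a splendid Morita equivalence is in particular a Morita equivalence, it preserves the number of simple modules and the decomposition matrix up to relabelling, and the three candidate algebras are pairwise non-Morita-equivalent: $kV_4$ has a single simple module, whereas $k\AlternatingGroup_4$ and $B_0(k\AlternatingGroup_5)$ each have three. Consequently it suffices to prove that each $B_i(G)$ is \emph{ordinarily} Morita equivalent to $k\AlternatingGroup_4$, for then the unique splendid class supplied by \cref{KleinFourDefectGroupsCravenEatonLinckelmannK} must be that of $k\AlternatingGroup_4$.

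To establish this ordinary Morita equivalence I would exploit the split structure $G = N \rtimes H$ with $N = O_{2'}(G) \cong C_3\times C_3\times C_3\times C_3$ and $H \cong \AlternatingGroup_4$ permuting the four cyclic factors. Because $\AlternatingGroup_4$ is $2$-transitive on the four letters, the only $H$-invariant linear characters of $N$ are the three \emph{constant} ones, namely the trivial character and the two characters sending each of $a,b,c,d$ to a fixed primitive cube root of unity; their $G$-orbits all have length one, and these are precisely the characters over which $B_0(G)$, $B_1(G)$, $B_2(G)$ lie. For the trivial character, the associated central idempotent $e_0=\tfrac{1}{|N|}\sum_{n\in N}n$ of $kN$ is $G$-stable, hence central in $kG$, and the map $g e_0\mapsto gN$ gives an algebra isomorphism $kGe_0\cong k[G/N]=k\AlternatingGroup_4$; as $O_{2'}(\AlternatingGroup_4)=\TrivialGroup$ and $V_4\trianglelefteq\AlternatingGroup_4$, the algebra $k\AlternatingGroup_4$ is a single block, so $kGe_0=B_0(G)\cong k\AlternatingGroup_4$. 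For $B_1(G)$ and $B_2(G)$ I would use Clifford theory: since the extension splits, each $H$-invariant $\theta$ extends to a linear character $\widehat{\theta}\in\Lin(G)$, and tensoring with $\widehat{\theta}$ is a Morita self-equivalence of $kG$ carrying $B_0(G)$ onto the block over $\theta$; Gallagher's theorem identifies $\Irr(B_i(G))$ with $\Irr(k\AlternatingGroup_4)$ compatibly with reduction modulo $2$, yielding the desired Morita equivalence. The same conclusion can be cross-checked computationally by reading off the $2$-decomposition matrices of $B_0(G)$, $B_1(G)$, $B_2(G)$ from the character table in \cref{table:ct_G_is_N_rtimes_H} with \textsf{GAP} and observing that all three coincide with that of $k\AlternatingGroup_4$.

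The main obstacle is the final discrimination between $k\AlternatingGroup_4$ and $B_0(k\AlternatingGroup_5)$: both have three simple modules and Cartan matrices of determinant $4$, so coarse numerical invariants do not separate them, and one must compare the finer shape of the decomposition matrices (equivalently, count the ordinary irreducible characters whose reduction modulo $2$ is irreducible, which is three for $k\AlternatingGroup_4$ but only one for $B_0(k\AlternatingGroup_5)$, or compare the composition lengths of the projective indecomposables). Establishing the ordinary Morita equivalence to $k\AlternatingGroup_4$ through the extendibility of the $H$-invariant characters $\theta$ neatly sidesteps this comparison, since $k\AlternatingGroup_4$ and $B_0(k\AlternatingGroup_5)$ are not Morita equivalent to one another; the classification theorem then upgrades the equivalence to a splendid one without further work.
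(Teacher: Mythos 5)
Your proposal is correct, but it proves the statement by a genuinely different route from the paper. The paper stays entirely on the character-theoretic side: it observes that the nine linear characters of $G$ reduce to irreducible Brauer characters, invokes the $2$-solvability of $G$ together with \cite[Theorem X.1.8]{Feit} to conclude that the corresponding nine one-dimensional simple $kG$-modules have maximal vertices and hence lie in the three full-defect blocks, so that each of $\Irr_K(B_0(G))$, $\Irr_K(B_1(G))$, $\Irr_K(B_2(G))$ has degree multiset $\{1,1,1,3\}$; this degree pattern, matched against the decomposition-matrix shapes established in Section~\ref{sec:TS_characters_in_blocks_with_Klein_four_defect_groups}, excludes $kV_4$ (only one simple module) and $B_0(k\AlternatingGroup_5)$ (three one-dimensional simples would force a character of degree $2$), leaving $k\AlternatingGroup_4$. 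You instead exploit the structure $G=N\rtimes H$ with $N=O_{2'}(G)$: the isomorphism $kGe_0\cong k[G/N]\cong k\AlternatingGroup_4$ handles $B_0(G)$, and twisting by the extensions of the two non-trivial $H$-invariant linear characters of $N$ transports this to $B_1(G)$ and $B_2(G)$; the upgrade from Morita to splendid Morita is then forced by \cref{KleinFourDefectGroupsCravenEatonLinckelmannK} once one knows the three candidate algebras are pairwise non-Morita-equivalent (your discriminant --- the number of ordinary irreducibles with irreducible $2$-modular reduction, $3$ versus $1$ --- is a legitimate Morita invariant read off the decomposition matrix). Your approach buys an explicit algebra isomorphism for $B_0(G)$ and makes the source of the three blocks transparent, at the cost of some Clifford-theoretic bookkeeping (in particular the identification of $B_1(G)$, $B_2(G)$ with the blocks covering the two non-trivial $H$-invariant characters of $N$, which you assert but which does follow from the count of full-defect blocks in \cref{2blocks_of_G}); the paper's argument is shorter, needs no information about $N$ beyond $2$-solvability, and fits the article's theme of deducing everything from character values, but it leans on the external reference to Feit and on the reader assembling the degree comparison from Section~\ref{sec:TS_characters_in_blocks_with_Klein_four_defect_groups} themselves.
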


\begin{proof}
By inspecting $X(G)$, we see that $\Lin(G)=\{\chi_1, \ldots, \chi_9\}$. Hence, $\chi_i^\circ$ is an irreducible Brauer character of $G$ for every $i\in\{1,\ldots, 9\}$. 
Next, we observe that the nine simple $kG$-modules~$S_1,\ldots, S_9$ affording $\chi_1^\circ, \ldots , \chi_9^\circ$ must have maximal vertices, as their dimensions are coprime to $p$. Thus, they belong to blocks whose defect groups are isomorphic to a Klein four-group. Therefore, by \cref{2blocks_of_G}, the ordinary character degrees of the elements of $\Irr_K(B_0(G))$, $\Irr_K(B_1(G))$, and $\Irr_K(B_2(G))$ are given by $\{1,1,1,3\}$. It follows from \cref{CharDegreesDeterminePuigClass}
that each of the blocks $B_0(G)$, $B_1(G)$, and $B_2(G)$ is splendidly Morita equivalent to the block algebra~$k\AlternatingGroup_4$.
\end{proof}

\begin{rem}
    It also follows from Part 5 of \cite[Theorem]{KoshitaniGlasgow1981} combined with \cite[Theorem 4.1]{HidaKoshitani} that we must have splendid Morita equivalences between $B_0(G), B_1(G), B_2(G)$, and $k\AlternatingGroup_4$. However, we wanted to illustrate that it is possible to exploit the machinery developed in the previous section in order to prove the assertion.
\end{rem}

\begin{thm}
Set $w:=\exp(2\pi i/3)$. Then $\textup{Triv}_2(G)$ is given as follows.
	\begin{enumerate}[label=\rm(\alph{enumi}),itemsep=1ex]
        \item The labelling of the columns of the block columns of $\Triv_2(G)$ is as asserted in \cref{table:TSCT_N_rtimes_H_when_p_is_2} and \cref{table:TSCT_N_rtimes_H_when_p_is_2_T22_and_T32_and_T33}.
        \item We have $T_{1,2}=T_{1,3}=T_{2,3}={\mathbf{0}}$.
   			\item The matrices $T_{i,1}$ with $1\leq i\leq 3$ are as given in \cref{table:TSCT_N_rtimes_H_when_p_is_2}.
            \item The matrices $T_{2,2}$, $T_{3,2}$, and $T_{3,3}$ are as given in \cref{table:TSCT_N_rtimes_H_when_p_is_2_T22_and_T32_and_T33}.
    \end{enumerate}
\end{thm}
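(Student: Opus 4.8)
The plan is to first determine every trivial source $kG$-module together with its ordinary character $\chi_{\widehat{M}}$, and then to read off each block $T_{i,v}$ of $\Triv_2(G)$ either from the character table in \cref{table:ct_G_is_N_rtimes_H} (when the species is taken at a cyclic $2$-subgroup) or from the representation theory of the relevant normaliser quotient (when it is not). First I would partition the $27$ blocks of $kG$ recorded in \cref{2blocks_of_G} into three families according to their defect: the three blocks $B_0(G),B_1(G),B_2(G)$ with defect group isomorphic to $V_4$, the three blocks $B_{21}(G),B_{22}(G),B_{23}(G)$ with defect group isomorphic to $C_2$, and the remaining $21$ blocks of defect zero. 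For each $V_4$-block, \cref{Cor:B_0(G)_is_splendidly_Morita_equivalent_to_kA_4} supplies a splendid Morita equivalence with $k\AlternatingGroup_4$, so \cref{Proposition_ts_Puig_A4} lists its seven trivial source modules and their characters; the abstract characters $\chi_\alpha,\chi_\beta,\chi_\gamma,\chi_\delta$ are then matched to the concrete irreducibles of \cref{2blocks_of_G} by their degrees $\{1,1,1,3\}$, the degree-$3$ character playing the role of $\chi_\delta$. For each $C_2$-block, \cref{BrauerTreeTrivialSourceModules} gives exactly two trivial source modules: the simple module $S$ of vertex $Q_2$, whose character $\chi_{\widehat{S}}$ is the unique irreducible in the block that is positive at the involution, and its projective cover, with $\chi_{\widehat{P(S)}}$ the sum of the two irreducibles in the block. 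For each defect-zero block the unique simple module is projective, hence a trivial source module of vertex $Q_1$ with $\chi_{\widehat{M}}$ equal to the corresponding irreducible character. As a consistency check, \cref{Omnibus_properties}(d) predicts $|\TS(G;Q_i)|=|[\overline{N}_{G}(Q_i)]_{2'}|$, and the resulting counts $33,6,9$ for $i=1,2,3$ agree with the numbers of $2'$-classes of $\overline{N}_{G}(Q_1)\cong G$, $\overline{N}_{G}(Q_2)\cong C_3\times S_3$ and $\overline{N}_{G}(Q_3)\cong C_3\times C_3$ from \cref{Normalisers_GAP_Example}.

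Next I would settle parts (a) and (b). For (a), the columns of the $v$-th block column are indexed by a fixed set of representatives of $[\overline{N}_{G}(Q_v)]_{2'}$, which by \cref{LemmaDeiml} may be lifted to honest elements of $N_G(Q_v)$; this fixes the labelling asserted in the two tables. Part (b) is immediate from \cref{rem:tsctbl}(c): since the three subgroups form the chain $Q_1<Q_2<Q_3$ we have $Q_2\not\leq_G Q_1$, $Q_3\not\leq_G Q_1$ and $Q_3\not\leq_G Q_2$, whence $T_{1,2}=T_{1,3}=T_{2,3}=\mathbf{0}$.

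For part (c) and the $Q_2$-columns of part (d) I would exploit that $Q_1$ and $Q_2$ are \emph{cyclic}, so \cref{LemmaRickard} applies: for a trivial source module $M$ and an element $g=g_2g_{2'}\in G$ one has $\chi_{\widehat{M}}(g)=\tau^G_{\langle g_2\rangle,\,g_{2'}}([M])$. Thus every entry of $T_{i,1}$ (the case $g_2=1$, so $\langle g_2\rangle=Q_1$) is obtained by evaluating the now-known character $\chi_{\widehat{M}}$ at a $2'$-class of $G$, while every entry of $T_{2,2}$ and $T_{3,2}$ (where $\langle g_2\rangle$ is a $G$-conjugate of $Q_2$) is obtained by evaluating $\chi_{\widehat{M}}$ at an element $g$ whose $2$-part generates a conjugate of $Q_2$ and whose $2'$-part represents the prescribed class; all such values are read directly from \cref{table:ct_G_is_N_rtimes_H}. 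This yields $T_{1,1},T_{2,1},T_{3,1}$ as in \cref{table:TSCT_N_rtimes_H_when_p_is_2} and $T_{2,2},T_{3,2}$ as in \cref{table:TSCT_N_rtimes_H_when_p_is_2_T22_and_T32_and_T33}.

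Finally, the block $T_{3,3}$ cannot be reached through \cref{LemmaRickard}, because $Q_3\cong V_4$ is not cyclic and so no single $g$ satisfies $\langle g_2\rangle=Q_3$. Here I would instead invoke \cref{rem:tsctbl}(a): the diagonal block $T_{3,3}$ consists of the values of the projective indecomposable characters of $\overline{N}_{G}(Q_3)\cong C_3\times C_3$ at its $2'$-classes, and as $C_3\times C_3$ is a $2'$-group this is just its ordinary character table. To label the rows correctly I would identify the two cyclic factors of $\overline{N}_{G}(Q_3)$ through the Green-correspondence bijections of \cref{Omnibus_properties}(d): the central factor $\langle\overline{abcd}\rangle$ records which of the three $V_4$-blocks a vertex-$Q_3$ module belongs to (the product $abcd$ is $\AlternatingGroup_4$-invariant, hence central in $G$), while the factor $\langle\overline{c}\rangle\cong\AlternatingGroup_4/V_4$ records which of the three simple modules of that block it is; the trivial module contributes the constant row $1$, consistent with \cref{rem:tsctbl}(b). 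The main obstacle throughout is precisely this bookkeeping — matching the local data (the $2'$-classes and simple modules of $\overline{N}_{G}(Q_2)$ and $\overline{N}_{G}(Q_3)$) to the global data (the conjugacy classes and explicit columns of \cref{table:ct_G_is_N_rtimes_H}), and keeping the attendant third roots of unity mutually consistent between the cyclic-vertex computations and the description of $T_{3,3}$.
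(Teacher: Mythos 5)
Your proposal follows the paper's proof in all essentials: the same block-by-block enumeration of the trivial source modules (Klein-four blocks via \cref{Cor:B_0(G)_is_splendidly_Morita_equivalent_to_kA_4} and \cref{Proposition_ts_Puig_A4}, defect-one blocks via \cref{BrauerTreeTrivialSourceModules}, defect-zero blocks directly), the same counting check against $|[\overline{N}_G(Q_i)]_{2'}|$, the same appeal to \cref{rem:tsctbl} for part (b), and the same use of \cref{LemmaRickard} to read $T_{1,1}$, $T_{2,1}$, $T_{3,1}$, $T_{2,2}$ and $T_{3,2}$ off \cref{table:ct_G_is_N_rtimes_H}. The one place you genuinely diverge is $T_{3,3}$: the paper computes it via \cref{Theorem_compute_off_diagonal_entries_of_TSCT}, restricting the characters $\chi_1,\ldots,\chi_9$ from $G$ to $N_G(Q_3)$ and evaluating at the $2'$-classes of $N_G(Q_3)/Q_3$, whereas you invoke \cref{rem:tsctbl}(a) to identify the diagonal block $T_{3,3}$ with the table of projective indecomposable characters of $\overline{N}_G(Q_3)\cong C_3\times C_3$ --- i.e.\ its ordinary character table, since $C_3\times C_3$ is a $2'$-group --- and then match rows through the Green correspondence of \cref{Omnibus_properties}(d). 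Both routes are valid, and they coincide here because each vertex-$Q_3$ trivial source module is one-dimensional with $Q_3$ in its kernel, so its Brauer quotient at $Q_3$ is just its restriction to $N_G(Q_3)$ viewed as an $\overline{N}_G(Q_3)$-module. Your version saves no computation, since deciding which linear character of $C_3\times C_3$ a given $S_i$ corresponds to is exactly the restriction the paper performs; its advantage is conceptual, making visible a priori that $T_{3,3}$ must be the character table of $C_3\times C_3$, with the central factor $\langle\overline{abcd}\rangle$ separating the three Klein-four blocks. Do note that part (a) and the entries of $T_{2,2}$ and $T_{3,2}$ ultimately require the explicit fusion of the $2'$-classes of $N_G(Q_2)$ and $N_G(Q_3)$ into those of $G$ (the identifications $1a'\sim_G 1a$, $3a'\sim_G 3f$, and so on), which the paper obtains computationally; your sketch correctly flags this bookkeeping as the remaining work but does not carry it out.
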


\begin{proof}
	\begin{enumerate}[label=\rm(\alph{enumi}), itemsep=0.5em, topsep=0.5em]
\item We consider the labelling of the $2^\prime$-conjugacy classes of $\Triv_2(G)$ which is given as follows: for the first block column, it coincides with the labelling of the $2^\prime$-conjugacy classes of $X(G)$ in \cref{table:ct_G_is_N_rtimes_H}; for the second and third block column, it is computed automatically by\ \GAP\ via the author's algorithm to compute trivial source character tables~(see \cite[Chapter 5 \& Chapter 7]{BBthesis}). Concretely, in our example, we have
\begin{small}
$$	1a^\prime {\sim_G} 1a,\quad 
            3a^\prime {\sim_G} 3f,\quad
            3b^\prime {\sim_G} 3l,\quad
		3c^\prime {\sim_G} 3n,\quad 
            3d^\prime {\sim_G} 3r,\quad
            3e^\prime {\sim_G} 3t,$$
$$ 	1a^{\prime\prime} \sim_G 1a,
            3a^{\prime\prime} \sim_G 3f,
            3b^{\prime\prime} \sim_G 3l,
		3c^{\prime\prime} \sim_G 3n, 
            3d^{\prime\prime} \sim_G 3r,
            3b^{\prime\prime} \sim_G 3l,
		3c^{\prime\prime} \sim_G 3n, 
            3d^{\prime\prime} \sim_G 3r,
            3e^{\prime\prime} \sim_G 3t.
$$
\end{small}

			\item This is immediate from \cref{rem:tsctbl}.
   \item \begin{itemize}[labelsep=0.2em, itemsep=0em, topsep=0em, left=0em]
		\item[$\cdot$] \underline{the matrix $T_{3,1}$}: by \cref{Omnibus_properties}, splendid Morita equivalences preserve trivial source modules and their vertices. Hence, as $B_0(G), B_1(G)$, and $B_2(G)$ are splendidly Morita equivalent to the group algebra $k\AlternatingGroup_4$, it follows from \cref{Proposition_ts_Puig_A4}
        that that the ordinary characters $\chi_1,\ldots, \chi_9$ have to occur as characters of trivial source modules with maximal vertices. But  \cref{Normalisers_GAP_Example} implies that $|\TS(G;Q_3)|=|{[N_G(Q_3)/Q_3]}_{2^\prime}|=9$. Therefore, we have already found all trivial source characters in this case. By \cref{LemmaRickard}, the entries of the matrix $T_{3,1}$ are now obtained by evaluating the ordinary characters at~$2^\prime$-conjugacy classes and can be read off from~$X(G)$ in \cref{table:ct_G_is_N_rtimes_H}. 
\vspace{0.2cm}
		\item[$\cdot$] \underline{the matrix $T_{2,1}$}: as the block algebras $B_0(G), B_1(G)$, and $B_2(G)$ are splendidly Morita equivalent to the group algebra $k\AlternatingGroup_4$, it follows from \cref{Proposition_ts_Puig_A4}
        that the ordinary characters $\chi_1 + \chi_4 + \chi_5 + \chi_{10}$, $\chi_3 + \chi_7 + \chi_8 + \chi_{12}$, and $\chi_2 + \chi_6 + \chi_9 + \chi_{11}$ have to occur as characters of trivial source modules with vertices of order two. Furthermore, by \cref{BrauerTreeTrivialSourceModules}, also the ordinary characters $\chi_{32}$, $\chi_{35}$, and $\chi_{36}$ have to occur as characters of trivial source modules with vertices of order two. But by \cref{Normalisers_GAP_Example}, we have $|\TS(G;Q_2)|=|{[N_G(Q_2)/Q_2]}_{2^\prime}|=6$. Therefore, we have already found all trivial source characters in this case.  By \cref{LemmaRickard}, the entries of the matrix $T_{2,1}$ are now obtained by evaluating the ordinary characters at $2^\prime$-conjugacy classes and can be read off from~$X(G)$ in \cref{table:ct_G_is_N_rtimes_H}.\vspace{0.2cm}
		\item[$\cdot$] \underline{the matrix $T_{1,1}$}: as the block algebras $B_0(G), B_1(G)$, and $B_2(G)$ are splendidly Morita equivalent to the group algebra~$k\AlternatingGroup_4$, it follows from \cref{Proposition_ts_Puig_A4}
        that ordinary characters labelling the first nine rows of the matrix $T_{1,1}$ have to occur as characters of trivial source modules. Furthermore, by \cref{BrauerTreeTrivialSourceModules}, also the ordinary characters~$\chi_{31} + \chi_{32}$, $\chi_{34} + \chi_{36}$, and $\chi_{33} + \chi_{35}$ have to occur as characters of PIMs of $kG$. All other characters belong to blocks of $kG$ of defect zero and are hence ordinary characters of trivial source modules which are simple and projective. By \cref{LemmaRickard}, the entries of the matrix $T_{1,1}$ are now obtained by evaluating the ordinary characters at $2^\prime$-conjugacy classes and can be read off from $X(G)$ in \cref{table:ct_G_is_N_rtimes_H}.
	\end{itemize}
   \item 
   	\begin{itemize}[labelsep=0.2em, itemsep=0em, topsep=0em, left=0em]
		\item[$\cdot$] \underline{the matrix $T_{2,2}$}: by the proof of Part (c), we already know the ordinary characters~$\chi_{\widehat{M}}$ of every trivial source module $M\in\TS(G;Q_2)$. Since we can write any~$g\in G$ as $g=g_pg_{p^\prime} = g_{p^\prime}g_p$, where $g_p$ denotes the $p$-part of $g$, it follows from \cref{LemmaRickard} that we have 
                $$
        \chi_{\widehat{M}}(g) 
            = \tau_{\langle g_p\rangle, g_{p^{\prime}}}^G([M])=\tau_{Q_2, g_{p^{\prime}}}^G([M])
        $$  
        
\begin{landscape}
\vspace*{\fill}
\begin{center}
\begin{table}[h]
	{\resizebox{22cm}{!}{
\begin{tabular}{@{}l@{}l@{}l@{}l@{}l@{}l@{}l@{}l@{}l@{}l@{}}
\(\begin{array}{|l V{4}ccccccccccccccccccccccccccccccccc|}
\hline
Q_v\ (1\leq v\leq 1) & \multicolumn{33}{c|}{Q_{1}\cong \TrivialGroup}\\ \hline
N_v\ (1\leq v\leq 1) & \multicolumn{33}{c|}{N_{1}\cong G}\\ \hline
n_j\ \in\ N_v & 1a & 3b & 3e & 3f & 3i & 3l & 3j & 3m & 3o & 3q & 3n & 3p & 3r & 3s & 3t & 3a & 9a & 9d & 3d & 9c & 9h & 3h & 9g & 9k & 3c & 9b & 9f & 3g & 9e & 9j & 3k & 9i & 9l\\ \Xhline{4\arrayrulewidth}
\chi_{1}+ \chi_{10} & 4 & 4 & 4 & 4 & 4 & 4 & 4 & 4 & 4 & 4 & 4 & 4 & 4 & 4 & 4 & 1 & 1 & 1 & 1 & 1 & 1 & 1 & 1 & 1 & 1 & 1 & 1 & 1 & 1 & 1 & 1 & 1 & 1\\
 \chi_{5}+ \chi_{10} & 4 & 4 & 4 & 4 & 4 & 4 & 4 & 4 & 4 & 4 & 4 & 4 & 4 & 4 & 4 & {\omega} & {\omega} & {\omega} & {\omega} & {\omega} & {\omega} & {\omega} & {\omega} & {\omega} & {\omega}^{2} & {\omega}^{2} & {\omega}^{2} & {\omega}^{2} & {\omega}^{2} & {\omega}^{2} & {\omega}^{2} & {\omega}^{2} & {\omega}^{2}\\
 \chi_{4}+ \chi_{10} & 4 & 4 & 4 & 4 & 4 & 4 & 4 & 4 & 4 & 4 & 4 & 4 & 4 & 4 & 4 & {\omega}^{2} & {\omega}^{2} & {\omega}^{2} & {\omega}^{2} & {\omega}^{2} & {\omega}^{2} & {\omega}^{2} & {\omega}^{2} & {\omega}^{2} & {\omega} & {\omega} & {\omega} & {\omega} & {\omega} & {\omega} & {\omega} & {\omega} & {\omega}\\
 \chi_{2}+ \chi_{11} & 4 & 4 {\omega}^{2} & 4 {\omega} & 4 {\omega} & 4 & 4 {\omega}^{2} & 4 & 4 {\omega}^{2} & 4 {\omega} & 4 & 4 {\omega}^{2} & 4 {\omega} & 4 & 4 {\omega}^{2} & 4 {\omega} & 1 & {\omega}^{2} & {\omega} & {\omega}^{2} & {\omega} & 1 & {\omega} & 1 & {\omega}^{2} & 1 & {\omega}^{2} & {\omega} & {\omega}^{2} & {\omega} & 1 & {\omega} & 1 & {\omega}^{2}\\
 \chi_{3}+ \chi_{12} & 4 & 4 {\omega} & 4 {\omega}^{2} & 4 {\omega}^{2} & 4 & 4 {\omega} & 4 & 4 {\omega} & 4 {\omega}^{2} & 4 & 4 {\omega} & 4 {\omega}^{2} & 4 & 4 {\omega} & 4 {\omega}^{2} & 1 & {\omega} & {\omega}^{2} & {\omega} & {\omega}^{2} & 1 & {\omega}^{2} & 1 & {\omega} & 1 & {\omega} & {\omega}^{2} & {\omega} & {\omega}^{2} & 1 & {\omega}^{2} & 1 & {\omega}\\
 \chi_{7}+ \chi_{12} & 4 & 4 {\omega} & 4 {\omega}^{2} & 4 {\omega}^{2} & 4 & 4 {\omega} & 4 & 4 {\omega} & 4 {\omega}^{2} & 4 & 4 {\omega} & 4 {\omega}^{2} & 4 & 4 {\omega} & 4 {\omega}^{2} & {\omega} & {\omega}^{2} & 1 & {\omega}^{2} & 1 & {\omega} & 1 & {\omega} & {\omega}^{2} & {\omega}^{2} & 1 & {\omega} & 1 & {\omega} & {\omega}^{2} & {\omega} & {\omega}^{2} & 1\\
 \chi_{6}+ \chi_{11} & 4 & 4 {\omega}^{2} & 4 {\omega} & 4 {\omega} & 4 & 4 {\omega}^{2} & 4 & 4 {\omega}^{2} & 4 {\omega} & 4 & 4 {\omega}^{2} & 4 {\omega} & 4 & 4 {\omega}^{2} & 4 {\omega} & {\omega}^{2} & {\omega} & 1 & {\omega} & 1 & {\omega}^{2} & 1 & {\omega}^{2} & {\omega} & {\omega} & 1 & {\omega}^{2} & 1 & {\omega}^{2} & {\omega} & {\omega}^{2} & {\omega} & 1\\
 \chi_{9}+ \chi_{11} & 4 & 4 {\omega}^{2} & 4 {\omega} & 4 {\omega} & 4 & 4 {\omega}^{2} & 4 & 4 {\omega}^{2} & 4 {\omega} & 4 & 4 {\omega}^{2} & 4 {\omega} & 4 & 4 {\omega}^{2} & 4 {\omega} & {\omega} & 1 & {\omega}^{2} & 1 & {\omega}^{2} & {\omega} & {\omega}^{2} & {\omega} & 1 & {\omega}^{2} & {\omega} & 1 & {\omega} & 1 & {\omega}^{2} & 1 & {\omega}^{2} & {\omega}\\
 \chi_{8}+ \chi_{12} & 4 & 4 {\omega} & 4 {\omega}^{2} & 4 {\omega}^{2} & 4 & 4 {\omega} & 4 & 4 {\omega} & 4 {\omega}^{2} & 4 & 4 {\omega} & 4 {\omega}^{2} & 4 & 4 {\omega} & 4 {\omega}^{2} & {\omega}^{2} & 1 & {\omega} & 1 & {\omega} & {\omega}^{2} & {\omega} & {\omega}^{2} & 1 & {\omega} & {\omega}^{2} & 1 & {\omega}^{2} & 1 & {\omega} & 1 & {\omega} & {\omega}^{2}\\
 \chi_{15} & 4 & -{\omega}+2 {\omega}^{2} & 2 {\omega}-{\omega}^{2} & -2 & 1 & -2 & 2 {\omega}-{\omega}^{2} & 1 & 1 & -{\omega}+2 {\omega}^{2} & 4 & -{\omega}+2 {\omega}^{2} & -2 & 2 {\omega}-{\omega}^{2} & 4 & {\omega} & 1 & {\omega}^{2} & {\omega} & 1 & {\omega}^{2} & {\omega} & 1 & {\omega}^{2} & {\omega}^{2} & {\omega} & 1 & {\omega}^{2} & {\omega} & 1 & {\omega}^{2} & {\omega} & 1\\
 \chi_{16} & 4 & 2 {\omega}-{\omega}^{2} & -{\omega}+2 {\omega}^{2} & -2 & 1 & -2 & -{\omega}+2 {\omega}^{2} & 1 & 1 & 2 {\omega}-{\omega}^{2} & 4 & 2 {\omega}-{\omega}^{2} & -2 & -{\omega}+2 {\omega}^{2} & 4 & {\omega}^{2} & 1 & {\omega} & {\omega}^{2} & 1 & {\omega} & {\omega}^{2} & 1 & {\omega} & {\omega} & {\omega}^{2} & 1 & {\omega} & {\omega}^{2} & 1 & {\omega} & {\omega}^{2} & 1\\
 \chi_{17} & 4 & -{\omega}+2 {\omega}^{2} & 2 {\omega}-{\omega}^{2} & -2 & 1 & -2 & 2 {\omega}-{\omega}^{2} & 1 & 1 & -{\omega}+2 {\omega}^{2} & 4 & -{\omega}+2 {\omega}^{2} & -2 & 2 {\omega}-{\omega}^{2} & 4 & {\omega}^{2} & {\omega} & 1 & {\omega}^{2} & {\omega} & 1 & {\omega}^{2} & {\omega} & 1 & {\omega} & 1 & {\omega}^{2} & {\omega} & 1 & {\omega}^{2} & {\omega} & 1 & {\omega}^{2}\\
 \chi_{18} & 4 & 2 {\omega}-{\omega}^{2} & -{\omega}+2 {\omega}^{2} & -2 & 1 & -2 & -{\omega}+2 {\omega}^{2} & 1 & 1 & 2 {\omega}-{\omega}^{2} & 4 & 2 {\omega}-{\omega}^{2} & -2 & -{\omega}+2 {\omega}^{2} & 4 & {\omega} & {\omega}^{2} & 1 & {\omega} & {\omega}^{2} & 1 & {\omega} & {\omega}^{2} & 1 & {\omega}^{2} & 1 & {\omega} & {\omega}^{2} & 1 & {\omega} & {\omega}^{2} & 1 & {\omega}\\
 \chi_{14} & 4 & 2 {\omega}-{\omega}^{2} & -{\omega}+2 {\omega}^{2} & -2 & 1 & -2 & -{\omega}+2 {\omega}^{2} & 1 & 1 & 2 {\omega}-{\omega}^{2} & 4 & 2 {\omega}-{\omega}^{2} & -2 & -{\omega}+2 {\omega}^{2} & 4 & 1 & {\omega} & {\omega}^{2} & 1 & {\omega} & {\omega}^{2} & 1 & {\omega} & {\omega}^{2} & 1 & {\omega} & {\omega}^{2} & 1 & {\omega} & {\omega}^{2} & 1 & {\omega} & {\omega}^{2}\\
 \chi_{13} & 4 & -{\omega}+2 {\omega}^{2} & 2 {\omega}-{\omega}^{2} & -2 & 1 & -2 & 2 {\omega}-{\omega}^{2} & 1 & 1 & -{\omega}+2 {\omega}^{2} & 4 & -{\omega}+2 {\omega}^{2} & -2 & 2 {\omega}-{\omega}^{2} & 4 & 1 & {\omega}^{2} & {\omega} & 1 & {\omega}^{2} & {\omega} & 1 & {\omega}^{2} & {\omega} & 1 & {\omega}^{2} & {\omega} & 1 & {\omega}^{2} & {\omega} & 1 & {\omega}^{2} & {\omega}\\
 \chi_{19} & 4 & -3 {\omega}-2 {\omega}^{2} & -2 {\omega}-3 {\omega}^{2} & -2 {\omega} & 1 & -2 {\omega}^{2} & -{\omega}+2 {\omega}^{2} & {\omega}^{2} & {\omega} & 2 {\omega}-{\omega}^{2} & 4 {\omega}^{2} & {\omega}+3 {\omega}^{2} & -2 & 3 {\omega}+{\omega}^{2} & 4 {\omega} & 1 & 1 & 1 & {\omega}^{2} & {\omega}^{2} & {\omega}^{2} & {\omega} & {\omega} & {\omega} & 1 & 1 & 1 & {\omega}^{2} & {\omega}^{2} & {\omega}^{2} & {\omega} & {\omega} & {\omega}\\
 \chi_{20} & 4 & -2 {\omega}-3 {\omega}^{2} & -3 {\omega}-2 {\omega}^{2} & -2 {\omega}^{2} & 1 & -2 {\omega} & 2 {\omega}-{\omega}^{2} & {\omega} & {\omega}^{2} & -{\omega}+2 {\omega}^{2} & 4 {\omega} & 3 {\omega}+{\omega}^{2} & -2 & {\omega}+3 {\omega}^{2} & 4 {\omega}^{2} & 1 & 1 & 1 & {\omega} & {\omega} & {\omega} & {\omega}^{2} & {\omega}^{2} & {\omega}^{2} & 1 & 1 & 1 & {\omega} & {\omega} & {\omega} & {\omega}^{2} & {\omega}^{2} & {\omega}^{2}\\
 \chi_{24} & 4 & -2 {\omega}-3 {\omega}^{2} & -3 {\omega}-2 {\omega}^{2} & -2 {\omega}^{2} & 1 & -2 {\omega} & 2 {\omega}-{\omega}^{2} & {\omega} & {\omega}^{2} & -{\omega}+2 {\omega}^{2} & 4 {\omega} & 3 {\omega}+{\omega}^{2} & -2 & {\omega}+3 {\omega}^{2} & 4 {\omega}^{2} & {\omega} & {\omega} & {\omega} & {\omega}^{2} & {\omega}^{2} & {\omega}^{2} & 1 & 1 & 1 & {\omega}^{2} & {\omega}^{2} & {\omega}^{2} & 1 & 1 & 1 & {\omega} & {\omega} & {\omega}\\
 \chi_{23} & 4 & -3 {\omega}-2 {\omega}^{2} & -2 {\omega}-3 {\omega}^{2} & -2 {\omega} & 1 & -2 {\omega}^{2} & -{\omega}+2 {\omega}^{2} & {\omega}^{2} & {\omega} & 2 {\omega}-{\omega}^{2} & 4 {\omega}^{2} & {\omega}+3 {\omega}^{2} & -2 & 3 {\omega}+{\omega}^{2} & 4 {\omega} & {\omega}^{2} & {\omega}^{2} & {\omega}^{2} & {\omega} & {\omega} & {\omega} & 1 & 1 & 1 & {\omega} & {\omega} & {\omega} & 1 & 1 & 1 & {\omega}^{2} & {\omega}^{2} & {\omega}^{2}\\
 \chi_{22} & 4 & -2 {\omega}-3 {\omega}^{2} & -3 {\omega}-2 {\omega}^{2} & -2 {\omega}^{2} & 1 & -2 {\omega} & 2 {\omega}-{\omega}^{2} & {\omega} & {\omega}^{2} & -{\omega}+2 {\omega}^{2} & 4 {\omega} & 3 {\omega}+{\omega}^{2} & -2 & {\omega}+3 {\omega}^{2} & 4 {\omega}^{2} & {\omega}^{2} & {\omega}^{2} & {\omega}^{2} & 1 & 1 & 1 & {\omega} & {\omega} & {\omega} & {\omega} & {\omega} & {\omega} & {\omega}^{2} & {\omega}^{2} & {\omega}^{2} & 1 & 1 & 1\\
 \chi_{21} & 4 & -3 {\omega}-2 {\omega}^{2} & -2 {\omega}-3 {\omega}^{2} & -2 {\omega} & 1 & -2 {\omega}^{2} & -{\omega}+2 {\omega}^{2} & {\omega}^{2} & {\omega} & 2 {\omega}-{\omega}^{2} & 4 {\omega}^{2} & {\omega}+3 {\omega}^{2} & -2 & 3 {\omega}+{\omega}^{2} & 4 {\omega} & {\omega} & {\omega} & {\omega} & 1 & 1 & 1 & {\omega}^{2} & {\omega}^{2} & {\omega}^{2} & {\omega}^{2} & {\omega}^{2} & {\omega}^{2} & {\omega} & {\omega} & {\omega} & 1 & 1 & 1\\
 \chi_{28} & 4 & {\omega}+3 {\omega}^{2} & 3 {\omega}+{\omega}^{2} & -2 {\omega}^{2} & 1 & -2 {\omega} & -{\omega}+2 {\omega}^{2} & {\omega} & {\omega}^{2} & 2 {\omega}-{\omega}^{2} & 4 {\omega} & -3 {\omega}-2 {\omega}^{2} & -2 & -2 {\omega}-3 {\omega}^{2} & 4 {\omega}^{2} & {\omega} & 1 & {\omega}^{2} & {\omega}^{2} & {\omega} & 1 & 1 & {\omega}^{2} & {\omega} & {\omega}^{2} & {\omega} & 1 & 1 & {\omega}^{2} & {\omega} & {\omega} & 1 & {\omega}^{2}\\
 \chi_{27} & 4 & 3 {\omega}+{\omega}^{2} & {\omega}+3 {\omega}^{2} & -2 {\omega} & 1 & -2 {\omega}^{2} & 2 {\omega}-{\omega}^{2} & {\omega}^{2} & {\omega} & -{\omega}+2 {\omega}^{2} & 4 {\omega}^{2} & -2 {\omega}-3 {\omega}^{2} & -2 & -3 {\omega}-2 {\omega}^{2} & 4 {\omega} & {\omega}^{2} & 1 & {\omega} & {\omega} & {\omega}^{2} & 1 & 1 & {\omega} & {\omega}^{2} & {\omega} & {\omega}^{2} & 1 & 1 & {\omega} & {\omega}^{2} & {\omega}^{2} & 1 & {\omega}\\
 \chi_{29} & 4 & 3 {\omega}+{\omega}^{2} & {\omega}+3 {\omega}^{2} & -2 {\omega} & 1 & -2 {\omega}^{2} & 2 {\omega}-{\omega}^{2} & {\omega}^{2} & {\omega} & -{\omega}+2 {\omega}^{2} & 4 {\omega}^{2} & -2 {\omega}-3 {\omega}^{2} & -2 & -3 {\omega}-2 {\omega}^{2} & 4 {\omega} & 1 & {\omega} & {\omega}^{2} & {\omega}^{2} & 1 & {\omega} & {\omega} & {\omega}^{2} & 1 & 1 & {\omega} & {\omega}^{2} & {\omega}^{2} & 1 & {\omega} & {\omega} & {\omega}^{2} & 1\\
 \chi_{30} & 4 & {\omega}+3 {\omega}^{2} & 3 {\omega}+{\omega}^{2} & -2 {\omega}^{2} & 1 & -2 {\omega} & -{\omega}+2 {\omega}^{2} & {\omega} & {\omega}^{2} & 2 {\omega}-{\omega}^{2} & 4 {\omega} & -3 {\omega}-2 {\omega}^{2} & -2 & -2 {\omega}-3 {\omega}^{2} & 4 {\omega}^{2} & 1 & {\omega}^{2} & {\omega} & {\omega} & 1 & {\omega}^{2} & {\omega}^{2} & {\omega} & 1 & 1 & {\omega}^{2} & {\omega} & {\omega} & 1 & {\omega}^{2} & {\omega}^{2} & {\omega} & 1\\
 \chi_{26} & 4 & {\omega}+3 {\omega}^{2} & 3 {\omega}+{\omega}^{2} & -2 {\omega}^{2} & 1 & -2 {\omega} & -{\omega}+2 {\omega}^{2} & {\omega} & {\omega}^{2} & 2 {\omega}-{\omega}^{2} & 4 {\omega} & -3 {\omega}-2 {\omega}^{2} & -2 & -2 {\omega}-3 {\omega}^{2} & 4 {\omega}^{2} & {\omega}^{2} & {\omega} & 1 & 1 & {\omega}^{2} & {\omega} & {\omega} & 1 & {\omega}^{2} & {\omega} & 1 & {\omega}^{2} & {\omega}^{2} & {\omega} & 1 & 1 & {\omega}^{2} & {\omega}\\
 \chi_{25} & 4 & 3 {\omega}+{\omega}^{2} & {\omega}+3 {\omega}^{2} & -2 {\omega} & 1 & -2 {\omega}^{2} & 2 {\omega}-{\omega}^{2} & {\omega}^{2} & {\omega} & -{\omega}+2 {\omega}^{2} & 4 {\omega}^{2} & -2 {\omega}-3 {\omega}^{2} & -2 & -3 {\omega}-2 {\omega}^{2} & 4 {\omega} & {\omega} & {\omega}^{2} & 1 & 1 & {\omega} & {\omega}^{2} & {\omega}^{2} & 1 & {\omega} & {\omega}^{2} & 1 & {\omega} & {\omega} & {\omega}^{2} & 1 & 1 & {\omega} & {\omega}^{2}\\
\chi_{31}+ \chi_{32} & 12 & -6 & -6 & 6 & 0 & 6 & -6 & 0 & 0 & -6 & 12 & -6 & 6 & -6 & 12 & 0 & 0 & 0 & 0 & 0 & 0 & 0 & 0 & 0 & 0 & 0 & 0 & 0 & 0 & 0 & 0 & 0 & 0\\
 \chi_{34}+ \chi_{36} & 12 & -6 {\omega}^{2} & -6 {\omega} & 6 {\omega} & 0 & 6 {\omega}^{2} & -6 & 0 & 0 & -6 & 12 {\omega}^{2} & -6 {\omega} & 6 & -6 {\omega}^{2} & 12 {\omega} & 0 & 0 & 0 & 0 & 0 & 0 & 0 & 0 & 0 & 0 & 0 & 0 & 0 & 0 & 0 & 0 & 0 & 0\\
 \chi_{33}+ \chi_{35} & 12 & -6 {\omega} & -6 {\omega}^{2} & 6 {\omega}^{2} & 0 & 6 {\omega} & -6 & 0 & 0 & -6 & 12 {\omega} & -6 {\omega}^{2} & 6 & -6 {\omega} & 12 {\omega}^{2} & 0 & 0 & 0 & 0 & 0 & 0 & 0 & 0 & 0 & 0 & 0 & 0 & 0 & 0 & 0 & 0 & 0 & 0\\
 \chi_{37} & 12 & 3 & 3 & 0 & -3 & 0 & 3 & -3 & -3 & 3 & 12 & 3 & 0 & 3 & 12 & 0 & 0 & 0 & 0 & 0 & 0 & 0 & 0 & 0 & 0 & 0 & 0 & 0 & 0 & 0 & 0 & 0 & 0\\
 \chi_{38} & 12 & 3 {\omega}^{2} & 3 {\omega} & 0 & -3 & 0 & 3 & -3 {\omega}^{2} & -3 {\omega} & 3 & 12 {\omega}^{2} & 3 {\omega} & 0 & 3 {\omega}^{2} & 12 {\omega} & 0 & 0 & 0 & 0 & 0 & 0 & 0 & 0 & 0 & 0 & 0 & 0 & 0 & 0 & 0 & 0 & 0 & 0\\
 \chi_{39} & 12 & 3 {\omega} & 3 {\omega}^{2} & 0 & -3 & 0 & 3 & -3 {\omega} & -3 {\omega}^{2} & 3 & 12 {\omega} & 3 {\omega}^{2} & 0 & 3 {\omega} & 12 {\omega}^{2} & 0 & 0 & 0 & 0 & 0 & 0 & 0 & 0 & 0 & 0 & 0 & 0 & 0 & 0 & 0 & 0 & 0 & 0\\
\hline
 \chi_{1}+ \chi_{4}+ \chi_{5}+ \chi_{10} & 6 & 6 & 6 & 6 & 6 & 6 & 6 & 6 & 6 & 6 & 6 & 6 & 6 & 6 & 6 & 0 & 0 & 0 & 0 & 0 & 0 & 0 & 0 & 0 & 0 & 0 & 0 & 0 & 0 & 0 & 0 & 0 & 0\\
 \chi_{3}+ \chi_{7}+ \chi_{8}+ \chi_{12} & 6 & 6 {\omega} & 6 {\omega}^{2} & 6 {\omega}^{2} & 6 & 6 {\omega} & 6 & 6 {\omega} & 6 {\omega}^{2} & 6 & 6 {\omega} & 6 {\omega}^{2} & 6 & 6 {\omega} & 6 {\omega}^{2} & 0 & 0 & 0 & 0 & 0 & 0 & 0 & 0 & 0 & 0 & 0 & 0 & 0 & 0 & 0 & 0 & 0 & 0\\
 \chi_{2}+ \chi_{6}+ \chi_{9}+ \chi_{11} & 6 & 6 {\omega}^{2} & 6 {\omega} & 6 {\omega} & 6 & 6 {\omega}^{2} & 6 & 6 {\omega}^{2} & 6 {\omega} & 6 & 6 {\omega}^{2} & 6 {\omega} & 6 & 6 {\omega}^{2} & 6 {\omega} & 0 & 0 & 0 & 0 & 0 & 0 & 0 & 0 & 0 & 0 & 0 & 0 & 0 & 0 & 0 & 0 & 0 & 0\\
 \chi_{32} & 6 & -3 & -3 & 3 & 0 & 3 & -3 & 0 & 0 & -3 & 6 & -3 & 3 & -3 & 6 & 0 & 0 & 0 & 0 & 0 & 0 & 0 & 0 & 0 & 0 & 0 & 0 & 0 & 0 & 0 & 0 & 0 & 0\\
 \chi_{36} & 6 & -3 {\omega}^{2} & -3 {\omega} & 3 {\omega} & 0 & 3 {\omega}^{2} & -3 & 0 & 0 & -3 & 6 {\omega}^{2} & -3 {\omega} & 3 & -3 {\omega}^{2} & 6 {\omega} & 0 & 0 & 0 & 0 & 0 & 0 & 0 & 0 & 0 & 0 & 0 & 0 & 0 & 0 & 0 & 0 & 0 & 0\\
 \chi_{35} & 6 & -3 {\omega} & -3 {\omega}^{2} & 3 {\omega}^{2} & 0 & 3 {\omega} & -3 & 0 & 0 & -3 & 6 {\omega} & -3 {\omega}^{2} & 3 & -3 {\omega} & 6 {\omega}^{2} & 0 & 0 & 0 & 0 & 0 & 0 & 0 & 0 & 0 & 0 & 0 & 0 & 0 & 0 & 0 & 0 & 0 & 0\\
\hline
 \chi_{1} & 1 & 1 & 1 & 1 & 1 & 1 & 1 & 1 & 1 & 1 & 1 & 1 & 1 & 1 & 1 & 1 & 1 & 1 & 1 & 1 & 1 & 1 & 1 & 1 & 1 & 1 & 1 & 1 & 1 & 1 & 1 & 1 & 1\\
 \chi_{5} & 1 & 1 & 1 & 1 & 1 & 1 & 1 & 1 & 1 & 1 & 1 & 1 & 1 & 1 & 1 & {\omega} & {\omega} & {\omega} & {\omega} & {\omega} & {\omega} & {\omega} & {\omega} & {\omega} & {\omega}^{2} & {\omega}^{2} & {\omega}^{2} & {\omega}^{2} & {\omega}^{2} & {\omega}^{2} & {\omega}^{2} & {\omega}^{2} & {\omega}^{2}\\
 \chi_{4} & 1 & 1 & 1 & 1 & 1 & 1 & 1 & 1 & 1 & 1 & 1 & 1 & 1 & 1 & 1 & {\omega}^{2} & {\omega}^{2} & {\omega}^{2} & {\omega}^{2} & {\omega}^{2} & {\omega}^{2} & {\omega}^{2} & {\omega}^{2} & {\omega}^{2} & {\omega} & {\omega} & {\omega} & {\omega} & {\omega} & {\omega} & {\omega} & {\omega} & {\omega}\\
 \chi_{3} & 1 & {\omega} & {\omega}^{2} & {\omega}^{2} & 1 & {\omega} & 1 & {\omega} & {\omega}^{2} & 1 & {\omega} & {\omega}^{2} & 1 & {\omega} & {\omega}^{2} & 1 & {\omega} & {\omega}^{2} & {\omega} & {\omega}^{2} & 1 & {\omega}^{2} & 1 & {\omega} & 1 & {\omega} & {\omega}^{2} & {\omega} & {\omega}^{2} & 1 & {\omega}^{2} & 1 & {\omega}\\
 \chi_{2} & 1 & {\omega}^{2} & {\omega} & {\omega} & 1 & {\omega}^{2} & 1 & {\omega}^{2} & {\omega} & 1 & {\omega}^{2} & {\omega} & 1 & {\omega}^{2} & {\omega} & 1 & {\omega}^{2} & {\omega} & {\omega}^{2} & {\omega} & 1 & {\omega} & 1 & {\omega}^{2} & 1 & {\omega}^{2} & {\omega} & {\omega}^{2} & {\omega} & 1 & {\omega} & 1 & {\omega}^{2}\\
 \chi_{7} & 1 & {\omega} & {\omega}^{2} & {\omega}^{2} & 1 & {\omega} & 1 & {\omega} & {\omega}^{2} & 1 & {\omega} & {\omega}^{2} & 1 & {\omega} & {\omega}^{2} & {\omega} & {\omega}^{2} & 1 & {\omega}^{2} & 1 & {\omega} & 1 & {\omega} & {\omega}^{2} & {\omega}^{2} & 1 & {\omega} & 1 & {\omega} & {\omega}^{2} & {\omega} & {\omega}^{2} & 1\\
 \chi_{6} & 1 & {\omega}^{2} & {\omega} & {\omega} & 1 & {\omega}^{2} & 1 & {\omega}^{2} & {\omega} & 1 & {\omega}^{2} & {\omega} & 1 & {\omega}^{2} & {\omega} & {\omega}^{2} & {\omega} & 1 & {\omega} & 1 & {\omega}^{2} & 1 & {\omega}^{2} & {\omega} & {\omega} & 1 & {\omega}^{2} & 1 & {\omega}^{2} & {\omega} & {\omega}^{2} & {\omega} & 1\\
 \chi_{8} & 1 & {\omega} & {\omega}^{2} & {\omega}^{2} & 1 & {\omega} & 1 & {\omega} & {\omega}^{2} & 1 & {\omega} & {\omega}^{2} & 1 & {\omega} & {\omega}^{2} & {\omega}^{2} & 1 & {\omega} & 1 & {\omega} & {\omega}^{2} & {\omega} & {\omega}^{2} & 1 & {\omega} & {\omega}^{2} & 1 & {\omega}^{2} & 1 & {\omega} & 1 & {\omega} & {\omega}^{2}\\
 \chi_{9} & 1 & {\omega}^{2} & {\omega} & {\omega} & 1 & {\omega}^{2} & 1 & {\omega}^{2} & {\omega} & 1 & {\omega}^{2} & {\omega} & 1 & {\omega}^{2} & {\omega} & {\omega} & 1 & {\omega}^{2} & 1 & {\omega}^{2} & {\omega} & {\omega}^{2} & {\omega} & 1 & {\omega}^{2} & {\omega} & 1 & {\omega} & 1 & {\omega}^{2} & 1 & {\omega}^{2} & {\omega}\\
\hline

\end{array}\)\\
\end{tabular}
	}}
	\caption{$T_{i,1}$ for $1\leq i\leq 3$
 }
	\label{table:TSCT_N_rtimes_H_when_p_is_2}
\title{}
\end{table}
\end{center}
\vspace*{\fill}
\end{landscape}
\newpage
        for each $M\in\TS(G;Q_2)$, as $Q_2$ has order two and is normal in $N_2$. Thus, the entries of the matrix $T_{2,2}$ are obtained by evaluating the ordinary characters of $G$ at certain conjugacy classes of $G$ and can therefore be read off from $X(G)$ in \cref{table:ct_G_is_N_rtimes_H}. 
\vspace{0.2cm}
		\item[$\cdot$] \underline{the matrix $T_{3,2}$}: by \cref{LemmaRickard}, the entries of the matrix $T_{3,2}$ are obtained by evaluating the ordinary characters of $G$ at certain conjugacy classes of $G$ and can therefore be read off from $X(G)$ in \cref{table:ct_G_is_N_rtimes_H}.
\vspace{0.2cm}
		\item[$\cdot$] \underline{the matrix $T_{3,3}$}: it follows from 
        \cref{conv:tsctbl} and 
        \cite[Theorem 4.14]{LASpPermSuryey}
        that the entries of $T_{3,3}$ are given by evaluating the restriction of the ordinary characters of the trivial source $kG$-modules with maximal vertices from $G$ to $N_G(Q_3)$ at the~$2^\prime$-conjugacy classes of $N_G(Q_3)/Q_3$. Using Part~(a), we see that these evaluations are as claimed.\qedhere
	\end{itemize}
	\end{enumerate}
\end{proof}

\begin{center}
\begin{table}[h]
	{\resizebox{10.2cm}{!}{
\begin{tabular}{@{}l@{}l@{}l@{}l@{}l@{}l@{}l@{}l@{}l@{}l@{}}
\(\begin{array}{|l V{4}cccccc|ccccccccc|}
\hline
Q_v\ (2\leq v\leq 3) & \multicolumn{6}{c|}{Q_{2}\cong C_2} & \multicolumn{9}{c|}{Q_{3}\cong V_4}\\ \hline
N_v\ (2\leq v\leq 3) & \multicolumn{6}{c|}{N_{2}\cong C_3\times D_{12}} & \multicolumn{9}{c|}{N_{3}\cong C_3\times \AlternatingGroup_4}\\ \hline
n_j\ \in\ N_v & 1a' & 3a' & 3b' & 3c' & 3d' & 3e' & 1a'' & 3a'' & 3b'' & 3c'' & 3d'' & 3h'' & 3e'' & 3f'' & 3g''\\ \Xhline{4\arrayrulewidth}
\chi_{1}+ \chi_{4}+ \chi_{5}+ \chi_{10} & 2 & 2 & 2 & 2 & 2 & 2 & 0 & 0 & 0 & 0 & 0 & 0 & 0 & 0 & 0\\
 \chi_{3}+ \chi_{7}+ \chi_{8}+ \chi_{12} & 2 & 2 {\omega}^{2} & 2 {\omega} & 2 {\omega} & 2 & 2 {\omega}^{2} & 0 & 0 & 0 & 0 & 0 & 0 & 0 & 0 & 0\\
 \chi_{2}+ \chi_{6}+ \chi_{9}+ \chi_{11} & 2 & 2 {\omega} & 2 {\omega}^{2} & 2 {\omega}^{2} & 2 & 2 {\omega} & 0 & 0 & 0 & 0 & 0 & 0 & 0 & 0 & 0\\
 \chi_{32} & 2 & -1 & -1 & 2 & -1 & 2 & 0 & 0 & 0 & 0 & 0 & 0 & 0 & 0 & 0\\
 \chi_{36} & 2 & -{\omega} & -{\omega}^{2} & 2 {\omega}^{2} & -1 & 2 {\omega} & 0 & 0 & 0 & 0 & 0 & 0 & 0 & 0 & 0\\
 \chi_{35} & 2 & -{\omega}^{2} & -{\omega} & 2 {\omega} & -1 & 2 {\omega}^{2} & 0 & 0 & 0 & 0 & 0 & 0 & 0 & 0 & 0\\
\hline
 \chi_{1} & 1 & 1 & 1 & 1 & 1 & 1 & 1 & 1 & 1 & 1 & 1 & 1 & 1 & 1 & 1\\
 \chi_{5} & 1 & 1 & 1 & 1 & 1 & 1 & 1 & {\omega} & 1 & {\omega}^{2} & {\omega} & 1 & {\omega}^{2} & {\omega} & {\omega}^{2}\\
 \chi_{4} & 1 & 1 & 1 & 1 & 1 & 1 & 1 & {\omega}^{2} & 1 & {\omega} & {\omega}^{2} & 1 & {\omega} & {\omega}^{2} & {\omega}\\
 \chi_{3} & 1 & {\omega}^{2} & {\omega} & {\omega} & 1 & {\omega}^{2} & 1 & 1 & {\omega} & 1 & {\omega} & {\omega}^{2} & {\omega} & {\omega}^{2} & {\omega}^{2}\\
 \chi_{2} & 1 & {\omega} & {\omega}^{2} & {\omega}^{2} & 1 & {\omega} & 1 & 1 & {\omega}^{2} & 1 & {\omega}^{2} & {\omega} & {\omega}^{2} & {\omega} & {\omega}\\
 \chi_{7} & 1 & {\omega}^{2} & {\omega} & {\omega} & 1 & {\omega}^{2} & 1 & {\omega} & {\omega} & {\omega}^{2} & {\omega}^{2} & {\omega}^{2} & 1 & 1 & {\omega}\\
 \chi_{6} & 1 & {\omega} & {\omega}^{2} & {\omega}^{2} & 1 & {\omega} & 1 & {\omega}^{2} & {\omega}^{2} & {\omega} & {\omega} & {\omega} & 1 & 1 & {\omega}^{2}\\
 \chi_{8} & 1 & {\omega}^{2} & {\omega} & {\omega} & 1 & {\omega}^{2} & 1 & {\omega}^{2} & {\omega} & {\omega} & 1 & {\omega}^{2} & {\omega}^{2} & {\omega} & 1\\
 \chi_{9} & 1 & {\omega} & {\omega}^{2} & {\omega}^{2} & 1 & {\omega} & 1 & {\omega} & {\omega}^{2} & {\omega}^{2} & 1 & {\omega} & {\omega} & {\omega}^{2} & 1\\
\hline

\end{array}\)\\
\end{tabular}
	}}
	\caption{$T_{i,j}$ for $2\leq i,j\leq 3$}
	\label{table:TSCT_N_rtimes_H_when_p_is_2_T22_and_T32_and_T33}
\title{}
\end{table}
\end{center}

\noindent \textbf{Acknowledgments.}
The author is thankful to Caroline Lassueur for valuable comments on an earlier draft of this note and gratefully acknowledges financial support by the DFG-SFB/TRR195.


	\nocite{}
	\bibliographystyle{aomalpha}
	\bibliography{Biblio.bib}
	\bigskip
	


\end{document}